\newcommand\definesymb[1]{%
\expandafter\newcommand\csname #1#1\endcsname{{\ensuremath{\mathbb{#1}}}}%
}
\newtheorem{theorem}{Theorem}
\newtheorem*{theorem*}{Theorem}
\newtheorem{defn}{Definition}[section]
\newtheorem{proposition}[defn]{Proposition}
\newtheorem{fact}[defn]{Fact}
\newtheorem{cor}[defn]{Corollary}
\title{Aperiodic Subshifts of Finite Type on Groups}
\author{Emmanuel Jeandel\\
LORIA, UMR 7503 - Campus Scientifique, BP 239\\
54\,506 VANDOEUVRE-L\`ES-NANCY, FRANCE\\
\texttt{emmanuel.jeandel@loria.fr}}
\begin{document}

\maketitle

\begin{abstract}

In this note we prove the following results:
\begin{itemize}
	\item If a finitely presented group $G$ admits a strongly aperiodic
	  SFT, then $G$ has decidable word problem.
	  More generally, for f.g. groups that are not recursively
	  presented, there exists a computable obstruction for them to
	  admit strongly aperiodic SFTs.
	\item On the positive side, we build strongly aperiodic SFTs on
	  some new classes of groups.
	  We show in particular that some particular monster groups admits
	  strongly aperiodic SFTs for trivial reasons.
	  Then, for a large class of group $G$, we show how to build
	  strongly aperiodic SFTs over $\mathbb{Z} \times G$.
	  In particular, this is true for the free group with $2$
	  generators, Thompson's groups $T$ and $V$, $PSL_2(\mathbb{Z})$
	  and any f.g. group of rational matrices which is bounded.
\end{itemize}
\end{abstract}

While Symbolic Dynamics \cite{LindMarcus} usually studies subshifts on
$\mathbb{Z}$, there has been a lot of work generalizing these results
to other groups, from dynamicians and computer scientists working in higher dimensions
($\mathbb{Z}^d$ \cite{LindMulti}) to group theorists interested in
characterizing group properties in terms of topological or dynamical
properties \cite{CicCoo}.

In this note, we are interested in the existence of aperiodic
Subshifts of Finite Type (SFT for short), or more generally of aperiodic
effectively closed shifts.

A subshift on a group $G$ corresponds informally to a way of coloring the
elements of the group, subject to some local constraints.
The constraints are in finite number for a SFT, the constraints are
given by an algorithm for an effectively closed shift.

Of great interest is the existence of aperiodic subshifts, which are
nonempty subshifts for which no coloring has a translation vector, i.e. is
invariant under translation by a nonzero element of $G$.

An important example of a group with an aperiodic SFT is
the group $\mathbb{Z}^2$, for which SFTs are sometimes called tilings
of the plane and given by Wang tiles \cite{wangpatternrecoII}, 
the most famous example being the Robinson tiling \cite{Robinson}.

Not all group admits an aperiodic SFTs though, it is for example easy to see
that there are no aperiodic SFT over $\mathbb{Z}$. However, all
(countable) groups admits aperiodic shifts, this result is surprisingly 
nontrivial and quite recent \cite{Gao}.

There has been a lot of work proving how to build aperiodic SFTs in a
large class of groups, and more generally tilings on manifolds \cite{Mozes:1997,Cohen2014}.
It is an open question to characterize groups that admit strongly
aperiodic SFTs.
Cohen\cite{Cohen2014} proved that this property is a quasiisometry
invariant, and Carroll and Penland \cite{Carroll} proved it is a
commensurability invariant.

\paragraph{}
In the first part of this article, we use computability theory to
prove that groups admitting aperiodic SFTs must satisfy some
computability obstruction. In particular, a finitely presented group
with an aperiodic SFT has a decidable word problem. This is proven in
section \ref{sec:eff}.
Cohen\cite{Cohen2014} showed that f.g. groups admitting strongly aperiodic SFTs are
one ended and asked whether it is a sufficient condition. Our first
result proves in particular that this condition is not sufficient.

For f.g. groups that are not finitely presented, the computability
condition is harder to understand: Intuitively it means that the
information about which products of generators are equal to the
identity is enough to know which products aren't, i.e. we can obtain
negative information about the word problem from positive information.
The exact criterion is formulated precisely using the concept of
enumeration reducibility. This  generalization is presented in section
\ref{sec:enum}, and might be omitted by any reader not familiar or
interested with recursion theory.

The more general result we obtain is as follows:
\begin{theorem*}
If a f.g. group $G$ admits a normally aperiodic effectively closed
subshift, then the complement of the word problem of $G$ is
enumeration reducible to the word problem of $G$.

For finitely presented groups, this implies the word problem of $G$ is
decidable.
\end{theorem*}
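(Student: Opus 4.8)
The plan is to turn normal aperiodicity into a compactness statement and then to read off from it an enumeration operator, being careful to use only \emph{positive} information about the word problem. Fix a finite generating set $S$, write $g_w$ for the element of $G$ represented by a word $w$ over $S \cup S^{-1}$, and recall that $\overline{WP(G)} = \{w : g_w \neq 1\}$. I regard a configuration as a colouring $y$ of the words $S^*$ that is \emph{consistent} (whenever $u =_G v$ we have $y(u) = y(v)$, so that $y$ descends to a genuine map $G \to A$) and avoids all forbidden patterns; since $X$ is effectively closed, the set $\mathcal F$ of forbidden patterns, coded by words, is recursively enumerable. The only non-effective ingredient is consistency, which is governed exactly by $WP(G)$: it is imposed by the true equalities $u =_G v$, that is, by the positive facts of the word problem.

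The starting observation is the following dichotomy. If $g_w = 1$ then $N := \langle\langle g_w\rangle\rangle$ is trivial and the subshift $X$, being aperiodic and hence nonempty, certainly contains configurations. If on the contrary $g_w \neq 1$ then $N$ is a nontrivial normal subgroup, and — recalling that normal aperiodicity means exactly that no configuration is invariant under a nontrivial normal subgroup — this forbids any configuration of $X$ from being $N$-invariant. Thus $g_w \neq 1$ if and only if $X$ contains no $N$-invariant configuration. Now $N$ is generated by the conjugates $g_v g_w g_v^{-1}$, so $N$-invariance of $y$ is the conjunction, over all words $v$ and $u$, of the period conditions $y(v w v^{-1} u) = y(u)$. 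Hence ``$X$ has an $N$-invariant configuration'' is the assertion that a certain family of closed conditions on $A^G$ — the forbidden patterns of $\mathcal F$, the consistency equalities coming from $WP(G)$, and the period conditions above — has nonempty intersection.

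Because $A^G$ is compact and each of these conditions is closed, emptiness of the intersection is witnessed by a \emph{finite} subfamily: finitely many forbidden patterns, finitely many equalities $u_i =_G v_i$, and finitely many period conditions, already having no common solution on the finite set of words they mention. This is exactly the certificate I want. The enumeration operator dovetails over finite windows of words, finite initial segments of $\mathcal F$, finite sets $D$ of words declared equal to $1$, and finite sets of conjugate-period conditions, and outputs $w$ as soon as it finds such a finite system with no satisfying colouring of its window. Soundness is immediate: any genuine $N$-invariant configuration of $X$ restricts to a solution of every finite system, so producing an unsatisfiable one proves $X$ has no $N$-invariant configuration, whence (as $g_w=1$ would make $X$ itself such a witness) $g_w \neq 1$. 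Completeness is the compactness argument just given. Crucially the procedure consults $WP(G)$ only through the positive facts $u_i =_G v_i$ assembled in $D$, so what we have built is precisely an enumeration reduction $\overline{WP(G)} \leq_e WP(G)$.

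Finally, the finitely presented case follows formally: such $G$ are recursively presented, so $WP(G)$ is recursively enumerable, and an enumeration reduction sends an r.e. set to an r.e. set, so $\overline{WP(G)}$ is r.e. as well; a set whose complement is also r.e. is decidable. The main obstacle, and the place I would spend the most care, is the compactness-to-certificate step together with the insistence that the certificate never use \emph{negative} word-problem information: one must check that identifying positions only along true equalities $u_i =_G v_i$ (never asserting that two words are distinct) keeps every finite system a sound over-approximation, so that unsatisfiability genuinely rules out $N$-invariant configurations rather than being an artefact of wrongly merged positions.
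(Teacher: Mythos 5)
Your overall architecture --- lift to words over the generators, impose consistency only through positive word-problem facts, use compactness to extract finite unsatisfiable certificates, and dovetail these into an enumeration operator --- is exactly the paper's strategy, and that part of your argument is sound (including your closing worry about never using negative word-problem information: imposing only some true equalities gives a relaxation, so unsatisfiability of the relaxed finite system soundly rules out configurations). But there is a genuine gap at the very first step, your ``dichotomy''. You assert that normal aperiodicity ``means exactly that no configuration is invariant under a nontrivial normal subgroup''. It does not: the definition only requires that the normal stabilizer $\bigcap_{h\in G}\mathrm{Stab}(hx)$ of every point be \emph{finite}. Consequently the equivalence on which your whole reduction rests --- $g_w\neq 1$ iff $X$ has no $\langle\langle g_w\rangle\rangle$-invariant configuration --- can fail: if $G$ has a nontrivial finite normal subgroup $H$ realized as the normal stabilizer of some $x\in X$, then for $g_w\in H\setminus\{\lambda\}$ the subshift does contain an $N$-invariant configuration even though $g_w\neq 1$. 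Your operator would then never enumerate such a $w$, so the claimed enumeration reduction is incomplete.

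The paper closes exactly this gap with a separate, nontrivial proposition: from any normally aperiodic effectively closed subshift one can construct another (still effectively closed, still a subshift) all of whose points have \emph{trivial} normal stabilizer. The proof has two steps. First, a maximal finite normal subgroup $H$ occurring as a normal stabilizer exists: an infinite strictly increasing chain $H_0\subsetneq H_1\subsetneq\cdots$ of such subgroups would, by passing to a limit point of the witnessing configurations, yield a point whose normal stabilizer contains $\bigcup_i H_i$ and is therefore infinite, contradicting normal aperiodicity; one then restricts to the points whose normal stabilizer is exactly $H$, which is still effectively closed since $H$ is finite. Second, one takes the product with the auxiliary subshift $Z=\{x\in H^G\mid x_{hg}\neq x_g\ \text{for all } g\in G,\ h\in H\setminus\{\lambda\}\}$, which is a nonempty SFT whose points have stabilizer meeting $H$ trivially, so the product has trivial normal stabilizers. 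Only after this reduction does your dichotomy --- and hence your enumeration operator --- become correct. As written, your argument proves the theorem only under the stronger hypothesis that every point of $X$ has trivial normal stabilizer.
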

Normal aperiodicity is a weakening of the notion of
aperiodicity, which is intermediate between the notion of weak
aperiodicity and of strong aperiodicity.
Strongly aperiodic subshifts ask that the stabilizer of each point is
finite. Here we ask that the stabilizer of each point does not contain
a normal subgroup.

The first part of the article is organized as follows. Subshifts can
be defined as shift-invariant topologically closed sets on $A^G$, the
sets of functions from $G$ to $A$.
The first section introduces an effective notion of closed sets on
$A^{\mathbb{F}_p}$ and then on $A^G$, and proves some link between the
two.
In the second section, we use this effective notion to prove the main
result for recursively presented groups
In the third section, we use concepts of computability theory to
generalize the results to any f.g. group.

\paragraph{}
In the second part of the article, we exhibit aperiodic subshifts of
finite type for some groups $G$.
We first show that some monster groups, which are infinite simple
groups with some bad properties, admits strongly aperiodic SFTs
The SFT we obtain are quite trivial and degenerate.

In the last section, we will remark how a variation on a technique by
Kari gives aperiodic SFTs on $\mathbb{Z} \times G$ for a large class
of group $G$. We do not know if there exists an easier proof of this
statement.
This class of groups contains the free group, f.g. subgroups of
$SO_n(\mathbb{Q})$, and Thompson's group $T$ and $V$.


\clearpage
\section{Effectively closed sets on Groups}

We first give definitions of effectively closed sets, which are some particular
closed subsets of the Cantor Space $A^G$ and $A^{\mathbb{F}_p}$.
The reader fluent with symbolic dynamics should remark that the sets we consider are not supposed to be
translation(shift)-invariant in this section.

\subsection{Effectively closed sets on the free group}

Let $\mathbb{F}_p$ denote the free group on $p$ generators, with
generators $x_1 \dots x_p$.
Unless specified otherwise, the identity on $\mathbb{F}_p$, and any
other group will be denoted by $\lambda$, and the symbol $1$ will be used only for
denoting a number.
Let $A$ be a finite alphabet.

A \emph{word} is a map $w$ from a finite part of $\mathbb{F}_p$ to $A$.
A configuration $x \in A^{\mathbb{F}_p}$ \emph{disagrees} with a word $w$
if there exists $g$ so that $x_g \not= w_g$ and both sides are well defined.

\begin{defn}
Let $L$ be a list of words.
The \emph{closed set} defined by $L$ is the subset $S^{\mathbb{F}_p}_L$ of $A^{\mathbb{F}_p}$ of all
configurations $x$ that disagree with all words in $L$.

Such a set is a closed set for the prodiscrete topology on $A^{\mathbb{F}_p}$.

if $S$ is a closed set of $A^{\mathbb{F}_p}$, we denote by ${\cal L}(S)$ the
set of all words of $\mathbb{F}_p$ that disagree with $S$.
Note that $S = S_{{\cal L}(S)}$.
\end{defn} 

\begin{defn}
A closed set $S$ of $A^{\mathbb{F}_p}$ is effectively closed if $S =
S_L$ for a recursively enumerable set of words $L$.
Equivalently, ${\cal L}(S)$ is recursively enumerable.
\end{defn}	
The equivalence needs a proof. We give it in the form of the following
result, which will be useful later on.
\begin{proposition}
	\label{prop:empty}
	There exists an algorithm that, given an effective enumeration
	$L$, halts iff $S_L^{\mathbb{F}_p}$ is empty.
\end{proposition}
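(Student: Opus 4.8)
The plan is to build a semi-decision procedure for emptiness that succeeds exactly when $S_L^{\mathbb{F}_p}=\emptyset$, with compactness of the configuration space supplying the termination certificate. First I would reformulate emptiness as a covering condition. For a word $w$ with finite domain $D_w$, let $[w]=\{x\in A^{\mathbb{F}_p}: x|_{D_w}=w\}$ be the cylinder of configurations \emph{agreeing} with $w$ (the complement, inside $A^{D_w}$, of disagreement). By definition a configuration $x$ lies in $S_L$ iff it disagrees with every word of $L$, i.e.\ lies in none of these cylinders, so
\[
S_L \;=\; A^{\mathbb{F}_p}\setminus \bigcup_{w\in L}[w].
\]
Hence $S_L=\emptyset$ is precisely the statement that the open cylinders $\{[w]:w\in L\}$ cover $A^{\mathbb{F}_p}$.

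The space $A^{\mathbb{F}_p}$ is compact, being a product of finite discrete spaces indexed by the countable group $\mathbb{F}_p$. Therefore, if these cylinders cover the whole space, some \emph{finite} subfamily already does: there is a finite $L'\subseteq L$ with $S_{L'}=\emptyset$. This is the crux of the argument. Conversely, since $L'\subseteq L$ gives $S_L\subseteq S_{L'}$, the existence of a finite $L'\subseteq L$ with $S_{L'}=\emptyset$ forces $S_L=\emptyset$. So $S_L^{\mathbb{F}_p}=\emptyset$ if and only if some finite subset of the enumerated words already defines the empty set.

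It remains to observe that emptiness is decidable for a finite list. Given a finite $L'$, set $D=\bigcup_{w\in L'}D_w$, a finite subset of $\mathbb{F}_p$ whose (reduced-word) elements can be listed effectively, the word problem of $\mathbb{F}_p$ being decidable. Membership in $S_{L'}$ depends only on the restriction $x|_D$, so I would enumerate all $|A|^{|D|}$ patterns $\sigma\in A^{D}$ and test, for each, whether $\sigma$ agrees with some $w\in L'$ (i.e.\ $\sigma|_{D_w}=w$); then $S_{L'}=\emptyset$ iff every pattern agrees with at least one word of $L'$. This is a finite computation.

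Assembling these, the algorithm runs the given enumeration of $L$ and, after the $n$-th word $w_n$ is produced, applies the finite test above to $L_n=\{w_1,\dots,w_n\}$, halting the first time it finds $S_{L_n}=\emptyset$. If $S_L\neq\emptyset$ then every $S_{L_n}\supseteq S_L$ is nonempty and the procedure never halts; if $S_L=\emptyset$ then by the compactness step some $L_n$ witnesses emptiness and the procedure halts. The only genuine content is the compactness reduction to a finite subcover; everything else is effective bookkeeping.
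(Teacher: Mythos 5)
Your proposal is correct and follows essentially the same route as the paper: decide emptiness for each finite prefix of the enumeration by exhaustively checking all patterns on the union of the supports, and invoke compactness of $A^{\mathbb{F}_p}$ to guarantee that emptiness of $S_L$ is witnessed by some finite subset of $L$. The cylinder/finite-subcover phrasing is just a more explicit rendering of the compactness step the paper states tersely.
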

\begin{proof}
	For a finite set $L$, it is easy to test if $S_L^{\mathbb{F}_p}$ is empty:
	just test all possible words of $A^{\mathbb{F}_p}$ defined on the union of
	the supports of all words in $L$. 
	
    Furthermore, by compactness, for an infinite $L'$, $S_{L'}^{\mathbb{F}_p}$ is empty iff there
	exists a finite $L \subseteq L'$ so that $S_{L}^{\mathbb{F}_p}$ is empty.
	
    Now, if $L$ is effective, 
	consider the following algorithm: enumerate all elements $w_i$ in
	$L$, and test at each step if $S_{\{ w_1, \dots, w_n\}}^{\mathbb{F}_p}$ is empty.
	By the first remark, it is indeed an algorithm. By the second
	remark, this algorithm halts iff $S_{L}^{\mathbb{F}_p}$ is empty.
\end{proof}	
\begin{proof}[Proof of the equivalence in the definition]
Suppose that $S = S_L^{\mathbb{F}_p}$ is effectively closed.
We will prove that ${\cal L}(S)$ is recursively enumerable.
Let $w$ be a finite word. Suppose that $w$ is defined over $I$.
Let $W$ be the set of all words defined over $I$ that are incompatible
with $w$. Then $w \in {\cal L}(S)$ iff $S_{L \cup W} = \emptyset$.

\end{proof}	
\clearpage
\subsection{Effectively closed sets on groups}

We will now look at closed sets of $A^G$.
From now on, a given finitely generated group $G$ will always be given
as a quotient of a free group, that is $G = \mathbb{F}_p / R$ for $R$
a normal subgroup of $\mathbb{F}_p$.
As long as $G$ is finitely generated, it is routine to show that all
such representations give the exact same definition of effectiveness.

Let $\phi$ be the natural map from $\mathbb{F}_p$ to $G$.
For $g,h \in \mathbb{F}_p$ we will write $g =_G h$ for $\phi(g) =
\phi(h)$.

Let $w$ be a word on $\mathbb{F}_p$. We say that $w$ is a $G$-word if
$w_g = w_h$ whenever $g =_G h$ and both sides are defined.

A configuration $x \in A^{G}$ \emph{disagrees} with a word $w$
if there exists $g \in \mathbb{F}_p$ so that $x_{\phi(g)} \not= w_g$ and both sides are  defined.
Note that a configuration in $A^G$ always disagrees with a word
which is not a $G$-word.

\begin{defn}
	The closed set on $G$ defined by $L$ is the subset $S^{G}_L$ of $A^G$
of all configurations $x$ that disagree with all words in $L$.

Such a set is a closed set for the prodiscrete topology on $A^G$.

If $S$ is a closed set of $A^G$, we denote by ${\cal L}(S)$ the
set of all words of $\mathbb{F}_p$ that disagree with $S$.
Note that $S = S_{{\cal L}(S)}$.
\end{defn}
Note in particular that all words that are not $G$-words are always
elements of ${\cal L}(S)$.

\begin{defn}
A closed set $S$ of $A^G$ is effectively closed if $S = S^G_L$ for a recursively enumerable set of words $L$.

\end{defn}	
Note that this is no longer equivalent to ${\cal L}(S)$ being recursively enumerable.
Indeed, $S= A^G$ is effectively closed but ${\cal L}(A^G)$ is 
recursively enumerable only if $G$ is recursively presented (see below).

In the following we will see closed sets of $A^G$ as closed sets of
$A^{\mathbb{F}_p}$.

To do so, denote by 
 $Per_G$ the set of all
configurations of $A^{\mathbb{F}_p}$ which are $G$-consistent, that is
$x_{g} = x_{h}$ whenever $g =_G h$. It is clear that $Per_G$ is a
closed set: In fact ${\cal L}(Per_G)$ is exactly the set of words that are
not $G$-words.

Furthermore $Per_G$ is isomorphic to $A^G$: There
is a natural map $\psi$ from $Per_G$ to $A^G$ defined
by $\psi(x) = y$ where $y_{\phi(g)} = x_g$. $\psi$ is invertible with
inverse defined by $\psi^{-1}(y) = x$ where $x_g = y_{\phi(g)}$.

This bijection preserves the closed sets in the following sense:

\begin{fact}
\[
	\psi(S_L^{\mathbb{F}_p} \cap Per_G) = S_L^G \]

\[ {\cal L}(S_L^{\mathbb{F}_p} \cap Per_G) = {\cal L}(S_L^G)\]
\end{fact}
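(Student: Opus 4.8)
The plan is to reduce both identities to a single pointwise observation: that the bijection $\psi$ preserves the relation ``disagrees with $w$'' for every word $w$. First I would establish the following key equivalence: for any $x \in Per_G$, writing $y = \psi(x)$, and for any word $w$, the configuration $x$ disagrees with $w$ in the sense of $A^{\mathbb{F}_p}$ if and only if $y$ disagrees with $w$ in the sense of $A^G$. This is immediate from the defining relation $x_g = y_{\phi(g)}$: the condition ``$\exists g$ with $x_g \neq w_g$ and both defined'' and the condition ``$\exists g \in \mathbb{F}_p$ with $y_{\phi(g)} \neq w_g$ and both defined'' are literally the same statement once we substitute $x_g = y_{\phi(g)}$. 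Since $x$ and $y$ are total, the only definedness constraint is that $w_g$ be defined, so no care beyond this substitution is needed.

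Granting this equivalence, the first identity follows by unwinding definitions. A configuration $x$ lies in $S_L^{\mathbb{F}_p} \cap Per_G$ exactly when $x \in Per_G$ and $x$ disagrees with every $w \in L$; by the equivalence this holds iff $\psi(x)$ disagrees with every $w \in L$, i.e. iff $\psi(x) \in S_L^G$. As $\psi$ is a bijection from $Per_G$ onto $A^G$ with the inverse given in the text, this shows $\psi(S_L^{\mathbb{F}_p} \cap Per_G) = S_L^G$.

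For the second identity I would argue directly from the definition of ${\cal L}$. A word $w$ belongs to ${\cal L}(T)$ precisely when every configuration of $T$ disagrees with $w$. Applying this to $T = S_L^{\mathbb{F}_p} \cap Per_G$ and using the equivalence together with the fact that $\psi$ maps $T$ bijectively onto $S_L^G$, the assertion ``every $x \in T$ disagrees with $w$'' translates into ``every $y \in S_L^G$ disagrees with $w$''. Hence $w \in {\cal L}(S_L^{\mathbb{F}_p} \cap Per_G)$ iff $w \in {\cal L}(S_L^G)$, which is the claim.

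The content here is routine, so the only thing to watch is the bookkeeping of the ``both sides defined'' clause and the treatment of words that are not $G$-words. For such a $w$ there exist $g =_G h$ with $w_g \neq w_h$; since any $x \in Per_G$ satisfies $x_g = x_h$, it must disagree with $w$ at $g$ or at $h$, so $w$ automatically lies in both ${\cal L}$-sets. I expect the main (minor) obstacle to be phrasing the equivalence so that these non-$G$-words and the vacuous case $T = \emptyset$ are subsumed uniformly rather than handled separately; organizing the whole argument around the single pointwise equivalence above achieves exactly that.
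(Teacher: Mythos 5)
Your proof is correct: the paper states this Fact without any proof, treating it as immediate from the definitions of $\psi$ and of ``disagrees with $w$'', and your single pointwise equivalence (that $\psi$ transports disagreement with any word $w$ from $Per_G$ to $A^G$) is exactly the routine verification being left implicit. Your handling of non-$G$-words and of the definedness clause is consistent with the paper's conventions, so nothing is missing.
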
	

While $S_L^{\mathbb{F}_p}$ is always effective if $L$ can be enumerated, it might be possible for 
$S_L^{\mathbb{F}_p} \cap Per_G$ to not be effective.
 In fact:
\begin{proposition}
	\label{prop:eff:rec}
	Let $A$ be an alphabet of size at least $2$.
	
	$Per_G$ is effective iff $G$ has a recognizable word problem.
\end{proposition}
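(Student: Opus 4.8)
The plan is to pass through the set ${\cal L}(Per_G)$ and to invoke the equivalence, already established for the free group, that a closed set $S \subseteq A^{\mathbb{F}_p}$ is effectively closed iff ${\cal L}(S)$ is recursively enumerable. Since $Per_G$ is a closed subset of $A^{\mathbb{F}_p}$, this immediately reduces the proposition to showing that ${\cal L}(Per_G)$ is recursively enumerable iff $G$ has a recognizable word problem. I will use the description recalled in the text that ${\cal L}(Per_G)$ is exactly the set of words that are not $G$-words, i.e.\ words $w$ defined on some finite $I \subseteq \mathbb{F}_p$ for which there exist $g,h \in I$ with $g =_G h$ and $w_g \neq w_h$. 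I will also use that the word problem of $G$ is recognizable iff the relation $\{(g,h) : g =_G h\}$ is recursively enumerable, which holds because $g =_G h$ iff $gh^{-1} =_G \lambda$ and multiplication in $\mathbb{F}_p$ is computable.

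For the forward direction, suppose the word problem of $G$ is recognizable, so we may enumerate all pairs $(g,h)$ with $g =_G h$. I then enumerate ${\cal L}(Per_G)$ by dovetailing: for each such pair produced, and for each finite word $w$ whose support contains $g$ and $h$ and satisfies $w_g \neq w_h$, output $w$. Every non-$G$-word arises in this way from a witnessing pair, and conversely every word output is genuinely not a $G$-word; hence ${\cal L}(Per_G)$ is recursively enumerable and $Per_G$ is effective.

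For the converse, suppose $Per_G$ is effective, so ${\cal L}(Per_G)$ is recursively enumerable, and fix an enumeration of it. This is where the hypothesis $|A| \geq 2$ enters: choose two distinct symbols $a,b \in A$. Given a query ``$g =_G h$?'', form the single word $w$ with support $\{g,h\}$ defined by $w_g = a$ and $w_h = b$. Then $w$ fails to be a $G$-word precisely when $g =_G h$, since its support is $\{g,h\}$ and $w_g \neq w_h$; thus $w \in {\cal L}(Per_G)$ iff $g =_G h$. Searching the enumeration of ${\cal L}(Per_G)$ for this specific $w$ therefore semidecides $g =_G h$, so the equality relation, and hence the word problem, is recursively enumerable.

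The only genuinely delicate point is the necessity of the hypothesis $|A| \geq 2$, and it is worth isolating why it cannot be dropped: with a one-letter alphabet one has $Per_G = A^{\mathbb{F}_p}$, which is trivially effective for \emph{every} group $G$, so no information about the word problem can be extracted and the converse direction collapses. Everything else is routine bookkeeping translating equalities into non-$G$-words and back.
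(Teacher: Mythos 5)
Your proof is correct and follows essentially the same route as the paper's: enumerate the pairs of $=_G$-equivalent elements to enumerate the non-$G$-words for one direction, and for the converse use a two-letter word ($w_g=a$, $w_h=b$) whose membership in ${\cal L}(Per_G)$ detects $g=_G h$ (the paper just specialises to $h=\lambda$). The explicit appeal to the free-group equivalence between effectiveness of $Per_G$ and recursive enumerability of ${\cal L}(Per_G)$, and the remark on why $|A|\geq 2$ is needed, are fine elaborations of steps the paper leaves implicit.
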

A recognizable word problem means that there is an algorithm that,
given a word $w$ in $\mathbb{F}_p$, halts iff $w =_G \lambda$.
This is equivalent to saying that $G$ is recursively presented.
\begin{proof}
	If $G$ has a recognizable word problem, we can enumerate
	all pairs ${(g,h) \in \mathbb{F}_p}$ s.t. $g =_R h$, and thus enumerate
	the set of words that are not $G$-words, that is ${\cal L}(Per_G)$.

	Conversely, suppose that ${\cal L}(Per_G)$ is enumerable.
Let $\{a,b\}$ be some fixed letters
				from $A$, with  $a\not= b$.				
				For $g \in G$,	consider the word $w$ over $\{\lambda,g\}$ defined by $w_\lambda = a, w_g =  b$.
    Then $g =_G \lambda$ iff  $w \in {\cal L}(Per_G)$.
	
\end{proof}
\begin{cor}
	\label{cor:per}
If $G$ has a recognizable word problem and $S_L^G$ is effectively closed, then
$S_L^{\mathbb{F}_p} \cap Per_G$ is effectively closed.
\end{cor}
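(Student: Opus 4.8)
The plan is to realise $S_L^{\mathbb{F}_p} \cap Per_G$ directly as the closed set on $A^{\mathbb{F}_p}$ defined by a single recursively enumerable list of words, obtained by merging $L$ with a list describing $Per_G$. The key elementary observation I would use is that intersection of closed sets on the free group corresponds to union of their defining word lists: for any two lists $L$ and $M$, a configuration disagrees with every word of $L$ and with every word of $M$ precisely when it disagrees with every word of $L \cup M$, so $S_L^{\mathbb{F}_p} \cap S_M^{\mathbb{F}_p} = S_{L \cup M}^{\mathbb{F}_p}$.

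First I would fix the two ingredients. Since $S_L^G$ is effectively closed, the list $L$ may be taken recursively enumerable by definition, and then $S_L^{\mathbb{F}_p}$ is itself effectively closed. Second, because $G$ has a recognizable word problem, Proposition \ref{prop:eff:rec} applies: $Per_G$ is effective, so $Per_G = S_M^{\mathbb{F}_p}$ for a recursively enumerable list $M$ (concretely $M = {\cal L}(Per_G)$, the set of words that are not $G$-words, which is enumerable exactly as in the forward direction of that proposition). Combining these with the union identity gives $S_L^{\mathbb{F}_p} \cap Per_G = S_{L \cup M}^{\mathbb{F}_p}$, and since the union of two recursively enumerable sets is recursively enumerable, the right-hand side is effectively closed, as required.

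There is essentially no hard step here; the whole content is concentrated in the single nontrivial input, namely that recognizability of the word problem yields an enumerable defining list for $Per_G$, which is exactly Proposition \ref{prop:eff:rec} and so requires no new work. Everything else is bookkeeping: the union identity for defining lists, the fact that the list $L$ witnessing effectiveness of $S_L^G$ is already a list of words on $\mathbb{F}_p$ and can therefore be reused unchanged (both $S_L^G$ and $S_L^{\mathbb{F}_p}$ are built from words on $\mathbb{F}_p$ by the same disagreement condition), and closure of recursively enumerable sets under finite union.
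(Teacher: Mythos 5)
Your argument is correct and is exactly the intended one: the paper states this corollary without proof precisely because it follows immediately from Proposition~\ref{prop:eff:rec} together with the identity $S_L^{\mathbb{F}_p} \cap S_M^{\mathbb{F}_p} = S_{L\cup M}^{\mathbb{F}_p}$ and closure of r.e.\ sets under union. Your one implicit step --- replacing $L$ by an r.e.\ list $L'$ with $S_L^G = S_{L'}^G$ and noting that the lifted intersections then also coincide --- is justified by the bijection $\psi$ from the preceding Fact, so nothing is missing.
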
	

\section{Effective subshifts on Groups}

\label{sec:eff}
If $x \in A^G$, denote by $gx$ the configuration of $A^G$ defined by $(gx)_h = x_{g^{-1}h}$.
This defines an action of $G$ on $A^G$.

\begin{defn}
A closed set $X$ of $A^G$ is said to be a subshift if $x \in X, g \in G$ implies that $gx \in X$.

$X$ is an effectively closed subshift if $X$ is effectively closed and is a
subshift.

$X$ is a SFT if there exists a finite set $L$ so that $X= S_{\{
	g^{-1}w, g \in G, w \in L\}} = S_{\{	g^{-1}w, g \in \mathbb{F}_p, w \in L\}}$.
	In particular a SFT is always effectively closed.
\end{defn}
\begin{fact}
If $X$ is a subshift, 	$\psi^{-1}(X) \subseteq Per_G$ is a subshift.
\end{fact}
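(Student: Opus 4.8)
The plan is to verify the two defining properties of a subshift --- topological closedness and shift-invariance --- for $\psi^{-1}(X)$, viewed as a subset of $A^{\mathbb{F}_p}$. Throughout I use the shift action on $A^{\mathbb{F}_p}$ defined by exactly the same formula as on $A^G$, namely $(ax)_b = x_{a^{-1}b}$ for $a,b \in \mathbb{F}_p$; this is simply the special case $G = \mathbb{F}_p$ of the action already introduced.

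Closedness is the easy half. Since $Per_G$ is a closed subset of $A^{\mathbb{F}_p}$ and $\psi$ is a homeomorphism from $Per_G$ onto $A^G$ (it is continuous with continuous inverse, both spaces being compact), the preimage $\psi^{-1}(X)$ of the closed set $X$ is closed in $Per_G$, and hence closed in $A^{\mathbb{F}_p}$.

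The heart of the argument is shift-invariance, which I would reduce to a single equivariance identity. First I would check that the $\mathbb{F}_p$-action preserves $Per_G$: if $x$ is $G$-consistent and $g =_G h$, then $a^{-1}g =_G a^{-1}h$ (apply $\phi$ and multiply by $\phi(a)^{-1}$ on the left), so $(ax)_g = x_{a^{-1}g} = x_{a^{-1}h} = (ax)_h$, showing $ax \in Per_G$. Next I would establish the key relation $\psi(ax) = \phi(a)\psi(x)$ for every $a \in \mathbb{F}_p$: writing $y = \psi(x)$ so that $y_{\phi(g)} = x_g$, a direct computation gives, for each $g$, $\psi(ax)_{\phi(g)} = (ax)_g = x_{a^{-1}g} = y_{\phi(a^{-1}g)} = y_{\phi(a)^{-1}\phi(g)} = (\phi(a)y)_{\phi(g)}$. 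Thus $\psi$ intertwines the $\mathbb{F}_p$-action on $Per_G$ with the $G$-action on $A^G$ through the morphism $\phi$. Given $x \in \psi^{-1}(X)$ and $a \in \mathbb{F}_p$, we then have $\psi(ax) = \phi(a)\psi(x) \in X$, since $\psi(x) \in X$ and $X$ is a subshift; hence $ax \in \psi^{-1}(X)$, as required.

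The proof is essentially routine, and I do not expect a genuine obstacle. The one place that deserves care is the equivariance identity $\psi(ax) = \phi(a)\psi(x)$: one must check that $\phi$ is applied in the correct places and that the left-translation conventions in the two actions match up, since a side error there would break the compatibility. Once that identity is in hand, invariance of $\psi^{-1}(X)$ follows immediately from invariance of $X$.
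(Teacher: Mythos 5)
Your proof is correct; the paper states this Fact without proof, treating it as routine, and your argument (closedness via $\psi$ being a homeomorphism onto $A^G$ with $Per_G$ closed, plus the equivariance identity $\psi(ax)=\phi(a)\psi(x)$ yielding shift-invariance) is exactly the intended verification. The key intertwining computation is carried out with the conventions matching, so there is nothing to correct.
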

Hence any subshift of $A^G$ lifts up to a subshift of $A^{\mathbb{F}_p}$.

As a warmup to the theorems, we consider $X_{\leq 1}$, the subset of $\{0,1\}^G$ of configurations
that contains at most one symbol $1$.
It is easy to see that $X_{\leq 1}$ is closed, and a subshift.

\begin{proposition}
	\label{prop:leq1}
Suppose that $G$ has a recognizable word problem.
	
	If $X_{\leq 1}$ is effectively closed then the word problem on	$G$ is decidable.
\end{proposition}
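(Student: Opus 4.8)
The plan is to reduce decidability of the word problem to extracting \emph{negative} information ($g \neq_G \lambda$) about $G$, since the hypothesis that $G$ has a recognizable word problem already supplies the \emph{positive} information: there is an algorithm that, given $g \in \mathbb{F}_p$, halts iff $g =_G \lambda$. If I can also produce an algorithm that halts iff $g \neq_G \lambda$, then running both in parallel decides the word problem. The whole point of $X_{\leq 1}$ is that its defining constraint, ``at most one symbol $1$'', lets a single forbidden pattern witness the distinctness of two group elements.

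Concretely, for each $g \in \mathbb{F}_p$ I would consider the word $w^{(g)}$ whose support is $\{\lambda, g\} \subseteq \mathbb{F}_p$ and which assigns $w^{(g)}_\lambda = w^{(g)}_g = 1$. A configuration $x \in \{0,1\}^G$ is compatible with $w^{(g)}$ exactly when $x_{\phi(\lambda)} = x_{\phi(g)} = 1$. If $g =_G \lambda$ then $\phi(g) = \phi(\lambda)$, this is a single constraint, and the configuration carrying one symbol $1$ at $\phi(\lambda)$ lies in $X_{\leq 1}$ and is compatible; hence $w^{(g)} \notin {\cal L}(X_{\leq 1})$. If instead $g \neq_G \lambda$ then $\phi(g) \neq \phi(\lambda)$, so every compatible configuration carries two distinct symbols $1$ and therefore escapes $X_{\leq 1}$; hence $w^{(g)} \in {\cal L}(X_{\leq 1})$. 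This gives the equivalence $g \neq_G \lambda \iff w^{(g)} \in {\cal L}(X_{\leq 1})$, which is the heart of the argument.

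It then remains to show that ${\cal L}(X_{\leq 1})$ is recursively enumerable. Writing $X_{\leq 1} = S_L^G$ with $L$ recursively enumerable, I would invoke Corollary~\ref{cor:per}: since $G$ has a recognizable word problem, $S_L^{\mathbb{F}_p} \cap Per_G$ is effectively closed as a subset of $A^{\mathbb{F}_p}$. By the equivalence established for closed sets on $\mathbb{F}_p$, its language ${\cal L}(S_L^{\mathbb{F}_p} \cap Per_G)$ is recursively enumerable, and by the Fact this language equals ${\cal L}(S_L^G) = {\cal L}(X_{\leq 1})$. Enumerating ${\cal L}(X_{\leq 1})$ and halting when $w^{(g)}$ appears therefore semi-decides $g \neq_G \lambda$; together with the semi-decision procedure for $g =_G \lambda$ coming from recognizability, the word problem is decidable.

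The main obstacle is conceptual rather than technical: recognizing that the ``at most one $1$'' condition is precisely what converts the inequality of two group elements into a finite forbidden pattern, so that the effectiveness of $X_{\leq 1}$ leaks negative word-problem information. Once the gadget $w^{(g)}$ is identified, the remaining steps are routine applications of Corollary~\ref{cor:per}, the Fact, and the $\mathbb{F}_p$-equivalence already proved.
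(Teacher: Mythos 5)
Your proposal is correct and follows essentially the same route as the paper: the same gadget word $w^{(g)}$ with $w^{(g)}_\lambda = w^{(g)}_g = 1$, the same equivalence $g \neq_G \lambda \iff w^{(g)} \in {\cal L}(X_{\leq 1})$, and the same use of Corollary~\ref{cor:per} to get recursive enumerability of the language (the paper phrases this on the lift $Y \subseteq A^{\mathbb{F}_p}$, but by the stated Fact ${\cal L}(Y) = {\cal L}(X_{\leq 1})$, so this is only a cosmetic difference).
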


\begin{proof}
	$X_{\leq 1}$ lifts up to a subshift $Y$ on $\mathbb{F}_p$ with the property
	that (a) $Y$ is effective (as $G$ is recursively presented), hence
	${\cal L}(Y)$ is enumerable
	(b) $Y$ consists of all configurations so that $x_g = x_h = 1\implies g = _G h$.

    Now let $g \in \mathbb{F}_p$.	
	Let $w$ be the word defined by $w_\lambda = 1$ and $w_g = 1$.
	Then $w \in {\cal L}(Y)$ iff $g \not=_G \lambda$.
	
	The complement of the word problem is recognizable, therefore decidable.
\end{proof}

In the following, we are now interested in aperiodic subshifts.
\begin{defn}
	For $x \in A^G$ denote by $Stab(x) = \{ g | gx = x\}$.
			
	A (nonempty) subshift $X$ is strongly aperiodic iff
	for every $x \in X$, $Stab(x)$ is finite.

	A (nonempty) subshift $X$ is normally aperiodic iff
	for every $x \in X$, $\cap_{h \in G} Stab(hx)$ is finite.
\end{defn}
Note that there are conflicting definitions in the literature for strong aperiodicity:
Some require that $Stab(x) = \{\lambda\}$ for all $x \in X$.
The only result in this paper where this makes a difference is Prop~\ref{prop:tarski}.

Both properties are equivalent for commutative groups. 
$\cap_{h\in G} Stab(hx)$ will be called the \emph{normal stabilizer}
of $x$. It is indeed a normal subgroup of $G$, and the union of all
normal subgroups of $Stab(x)$.

Our first result states that a strongly aperiodic effectively closed subshift
(and in particular a strongly aperiodic SFT) 
forces the group to have a decidable word problem in the
class of torsion-free recursively presented group.
The next proposition strengthens the result by deleting the
torsion-freeness requirement.
\begin{proposition}
	Let $G$ be a torsion-free recursively presented group.
	
If $G$ admits a strongly aperiodic effective subshift,
then $G$ has decidable word problem.
\end{proposition}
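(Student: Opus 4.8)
The plan is to extract negative information about the word problem from the subshift, so that the complement of the word problem becomes recursively enumerable; since $G$ is recursively presented the word problem itself is already recognizable, and together these give decidability. First I would use torsion-freeness to simplify strong aperiodicity: the stabilizer $Stab(x)$ is a subgroup of $G$, and a finite subgroup of a torsion-free group is trivial, so for the given strongly aperiodic subshift $X$ every $x \in X$ has $Stab(x) = \{\lambda\}$. Thus for every nontrivial $h \in G$ and every $x \in X$ we have $hx \neq x$. Next I would lift $X$ to $\mathbb{F}_p$: set $Y = \psi^{-1}(X) \subseteq Per_G$. Because $G$ is recursively presented it has a recognizable word problem, so Corollary~\ref{cor:per} applies and $Y = S_L^{\mathbb{F}_p} \cap Per_G$ is effectively closed; hence $\mathcal{L}(Y)$ is recursively enumerable.

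The core of the argument is a fixed-point test, uniform in $g \in \mathbb{F}_p$. For each $g$ define $Z_g = \{ y \in Y : gy = y \}$, the set of configurations of $Y$ fixed by the shift by $g$. Writing $h = \phi(g)$, the compatibility $\psi(gy) = h\,\psi(y)$ shows that $Z_g$ is nonempty exactly when some $x \in X$ satisfies $h \in Stab(x)$. If $h = \lambda$ then $g$ acts trivially on $Per_G$, so $Z_g = Y$, which is nonempty since $X$ is; if $h \neq \lambda$ then trivial stabilizers force $Z_g = \emptyset$. Therefore $Z_g \neq \emptyset$ iff $g =_G \lambda$. It remains to see that $Z_g$ is effectively closed uniformly in $g$: the condition $gy = y$ amounts to $y_k = y_{g^{-1}k}$ for all $k \in \mathbb{F}_p$, which excludes exactly the words supported on a pair $\{k, g^{-1}k\}$ that assign different letters to the two positions. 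This set of words $L_g$ is recursive uniformly in $g$, so enumerating $\mathcal{L}(Y) \cup L_g$ gives an effective enumeration whose closed set is precisely $Z_g$.

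Finally I would feed this enumeration to the algorithm of Proposition~\ref{prop:empty}: given $g$, it halts iff the closed set defined by $\mathcal{L}(Y) \cup L_g$ is empty, i.e.\ iff $g \neq_G \lambda$. This semi-decides the complement of the word problem; combined with the recognizer for $\{g : g =_G \lambda\}$ coming from recursive presentation, both the word problem and its complement are recursively enumerable, so the word problem is decidable. The main obstacle is the bookkeeping that makes $Z_g$ effectively closed uniformly in $g$ --- one must ensure that the enumeration of the fixing constraints $L_g$ can be produced algorithmically from $g$ and merged with the enumeration of $\mathcal{L}(Y)$ --- while the genuinely new idea is the translation ``$g$ fixes a point of $X$ iff $g =_G \lambda$'', which is exactly where strong (equivalently, here, trivial-stabilizer) aperiodicity is used.
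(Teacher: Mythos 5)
Your proposal is correct and follows essentially the same route as the paper: lift $X$ to an effectively closed $Y \subseteq Per_G$, use torsion-freeness to reduce strong aperiodicity to trivial stabilizers, observe that the effectively closed set of configurations fixed by $g$ meets $Y$ iff $g =_G \lambda$, and run the emptiness semi-algorithm to recognize the complement of the word problem. The only differences are expository (you spell out the uniform forbidden-word description of the fixed-point set and the final combination with recognizability of $W(G)$, which the paper leaves implicit).
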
	
\begin{proof}
Let $X$ be the strongly aperiodic effective subshift.
$X$ lifts up to a subshift $Y$ on $A^{\mathbb{F}_p}$.

Let $R = \{ g | g =_G \lambda\}$.

Note that if $\psi(y) = x$, then
$Stab(y) = Stab(x) R = R Stab(x)$.
Furthermore, if $G$ is torsion-free, then $Stab(x) = \{\lambda\}$, hence for
all $y \in Y$, $Stab(y) = R$.

Now	let $g \in \mathbb{F}_p$.
Let $Z = \{ x | \forall t, x_{gt} = x_t\} = \{ x \in A^{\mathbb{F}_p} | g \in  Stab(x)\}$.

It is easy to see that $Z$ is effective. Furthermore $Y \cap Z =
\emptyset$ iff $g \not=_G \lambda$.

As there is a semialgorithm to test whether $Y \cap Z = \emptyset$, we
get that the complement of the word problem is
recognizable, hence decidable.
\end{proof}

\begin{proposition}
	Let $G$ be a recursively presented group.

    If $G$ admits a normally aperiodic effectively closed subshift,
    then it admits a normally aperiodic effectively closed subshift $X$ where for all
    $x \in X$, ${\cap_{h \in G} Stab(hx) = \lambda}$.
\end{proposition}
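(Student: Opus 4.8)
The plan is to obtain the improved subshift as a product $X' = X \times Y$, where $X$ is the given normally aperiodic effectively closed subshift and $Y$ is an auxiliary effectively closed subshift whose only role is to destroy finite normal symmetries. The key is an elementary observation about the product action: for $(x,y) \in X \times Y$ one has $Stab(x,y) = Stab(x) \cap Stab(y)$, so $\cap_{h \in G} Stab(h(x,y))$ is the largest normal subgroup of $G$ contained in $Stab(x) \cap Stab(y)$. Being in particular a normal subgroup of $G$ contained in $Stab(x)$, it is contained in $\cap_{h \in G} Stab(hx)$, which is finite by normal aperiodicity of $X$. Hence the normal stabilizer of $(x,y)$ is always a finite normal subgroup of $G$, and moreover it is contained in $Stab(y)$. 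Consequently, if I can build a nonempty effectively closed $Y$ in which no configuration is fixed by a nontrivial finite normal subgroup of $G$, then every point of $X \times Y$ has trivial normal stabilizer, and $X \times Y$ is the desired subshift (nonemptiness and effective closedness of a product being immediate).

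First I would reduce the task for $Y$ to breaking finite normal subgroups one at a time. A configuration $y$ is fixed by a finite normal subgroup $N$ exactly when $y$ is constant on every left coset $Ng$; so it suffices that each $y \in Y$ be non-constant on every coset of every nontrivial finite normal subgroup. Since $G$ is recursively presented, its word problem is recursively enumerable, and this lets me enumerate candidate finite normal subgroups: a finite set of words $S \subseteq \mathbb{F}_p$ represents a finite normal subgroup as soon as the finitely many closure relations (closure under product, inverse, and conjugation by the generators, all read modulo $=_G$) are confirmed, each such relation being a positive instance of the word problem. For every enumerated candidate $N$ I would impose the local rule forbidding a configuration from being constant on a coset $Ng$ (equivalently, with a large enough alphabet, I would ask $y$ to be injective along each coset). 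Because $N$ is normal this family of constraints is shift-invariant, it is visibly closed, and it is nonempty since one may colour the partition of $G$ into left $N$-cosets coset by coset; and any point surviving all the constraints is non-constant on every such coset, hence fixed by no nontrivial finite normal subgroup.

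The delicate point, and the step I expect to be the main obstacle, is the effectivity and, above all, the nonemptiness of $Y$. Enumerating finite normal subgroups through the word problem provides only \emph{positive} information: I can certify that the closure relations hold, but I cannot certify that the listed elements are genuinely distinct, nor that a candidate subgroup is actually nontrivial. If a candidate is over-identified, or in the extreme case trivial, the naive injectivity constraint becomes unsatisfiable on $Per_G$ and collapses $Y$ to the empty set. The heart of the proof is therefore to phrase the breaking so that it is robust to this defect, forbidding only the pattern in which all values along a coset coincide, and then proving that the resulting recursively enumerable list of forbidden patterns still defines a nonempty subshift. Here I would use the emptiness semi-test of Proposition~\ref{prop:empty}, together with Corollary~\ref{cor:per} (available because $G$ is recursively presented) to pass between $A^{\mathbb{F}_p} \cap Per_G$ and $A^G$, in order to certify nonemptiness and to keep the whole construction effectively closed. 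Reconciling ``I must break every genuinely nontrivial finite normal subgroup'' with ``I only ever have semi-decidable access to which candidates are nontrivial'' is exactly where the argument has to be careful.
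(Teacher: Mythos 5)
Your reduction to a product is sound, and your observation that the normal stabilizer of $(x,y)$ is a finite normal subgroup of $G$ contained in $Stab(y)$ is exactly how the paper concludes. The gap is in what you then ask of $Y$: a single nonempty effectively closed subshift, over a fixed finite alphabet, in which no point is fixed by any nontrivial finite normal subgroup of $G$. This object need not exist. If $G$ has infinitely many independent minimal finite normal subgroups --- for instance a finitely generated group whose centre contains a copy of $\bigoplus_{n} \mathbb{Z}/2\mathbb{Z}$, which exists by constructions of P.~Hall --- then already for $2^n-1$ central involutions the condition ``non-constant on every coset of every order-two central subgroup'' forces a proper colouring of a complete graph on $2^n$ vertices inside each coset of the subgroup they generate, so no fixed alphabet suffices; and you give no argument that the weaker condition you actually need is satisfiable either. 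On top of this existence problem sits the effectivity problem you yourself flag (you can certify that a candidate tuple of words is closed under the group operations, but never that it represents a \emph{nontrivial} subgroup, so a degenerate candidate collapses $Y$ to $\emptyset$); you correctly identify this as the heart of the matter and then defer it, so the proof is not complete even granting existence.

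The paper sidesteps both difficulties with one preliminary step that your proposal is missing. By a compactness argument, the normal stabilizers actually realized by points of $X$ cannot form an infinite strictly increasing chain (a limit point of witnesses $x_i$ with $\bigcap_{g} Stab(gx_i) = H_i$ would have infinite normal stabilizer, contradicting normal aperiodicity), so there is a point $x_0$ whose normal stabilizer $H$ is maximal among realized ones. Restricting to $Y = \{x \in X \mid \forall g \in G,\ \forall h \in H,\ hgx = gx\}$ gives a nonempty effectively closed subshift all of whose points have normal stabilizer exactly $H$. One then only has to break the single, explicitly known finite normal subgroup $H$, via $Z = \{x \in H^G \mid x_{hg} \neq x_g \text{ for all } g \in G,\ h \in H\setminus\{\lambda\}\}$, which is nonempty, effectively closed, and needs no enumeration of normal subgroups and no access to the word problem. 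If you want to salvage your route, you should incorporate this maximality step: it converts ``break every finite normal subgroup I can only semi-recognize'' into ``break one finite normal subgroup handed to me as a finite set,'' which is where the real content of the proposition lies.
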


\begin{proof}
	Let $X$ be normally aperiodic.
	
	The proof is in two steps.
	In the first step, we will prove that there exists a finite normal
	subgroup $H$ of $G$ and a nonempty effective subshift $Y$ so that for all $x
	\in Y$, $\cap_{g \in G} Stab(gx) = H$.
	
	Let $H_0 = \{\lambda\}$.
	Suppose that there exists $x \in X$ so that
	$H_0 \subsetneq \cap_{h \in G} Stab(hx)$.
	Then let's denote $H_1 \supset H_0 $ the normal subgroup on the right.
	
    We do the same for $H_1$, building progressively a chain of normal
	subgroups $H_1 \dots H_n \dots$.

    It is impossible however to obtain an infinite chain this way.
	Indeed, as for all $i$, there exists $x_i$ so that $\cap_{g \in G}
	Stab(gx_i) = H_i$, a limit point $x$ of $x_i$ would verify 
	$\cap_{g \in G} Stab(gx) \supseteq \cup_i H_i$, hence $x$ would be
	a configuration with an infinite normal stabilizer, impossible by definition.
	
	Hence this process will stop, and we obtain some finite normal
	subgroup $H$ of $G$ and a point $x_0$ so that 
	$\cap_{h \in G}	Stab(hx) = H$ and no point $x$ has a larger normal stabilizer.

	Now let $Y = \{ x \in X | \forall g \in G, h\in H, hgx = gx \}$.
    $Y$ is nonempty, as it contains $x_0$.
	As $H$ is finite, $Y$ is clearly effectively closed. As $H$ is
	normal, it is a subshift. Furthermore, for all $x \in Y$,
	$\cap_{h \in G}	Stab(hx) = H$.

\hspace{5mm}

    Now the second step.
	Take $Z = \{ x \in H^G | \forall g \in G, h \in H \setminus \{ \lambda\},  x_{hg}\not= x_{g}\}$.
	$Z$ is clearly effectively closed. As $H$ is normal, it is a subshift\footnote{
	Indeed, let $z \in Z$ and $t \in G$.
	Let $g \in G$ and $h \in H \setminus \{\lambda\}$.
	Then $(tz)_{hg} = z_{t^{-1}hg} = z_{t^{-1}htt^{-1}g} \not=
	z_{t^{-1}g} = (tz)_g$, hence $tz \in Z$}.
	$Z$ is also nonempty: Write $G = HI$ where
    $I$ is a family of representatives of $G/H$. Then
	the point $z$ defined by $z_g = h$ if $g \in hI$ is in $Z$, hence
	$Z$ is nonempty\footnote{
  Indeed, let $g \in G$ and $h \in H \setminus \{\lambda\}$.
  Then $z_g = k$ where $g \in kI$ for some $k \in H$.
  But $hg \in (hk)I$ hence $z_{hg} = hk \not= k = z_g$.
  }. Furthermore, if $z \in Z$, then $Stab(x) \cap H = \{\lambda\}$
\footnote{
  Indeed, for $h \in H \setminus \{\lambda\}$,
  $(hx)_\lambda = x_{h^{-1}} \not= x_\lambda$, hence $h \not\in Stab(x)$.  
  }.
As a consequence, $Z \times Y$ is a nonempty subshift for which for
all $x \in Z \times Y$, $\cap_{h \in G} Stab(hx) = \{\lambda\}$.
\end{proof}

\begin{theorem}
	\label{cor:final}
	Let $G$ be a recursively presented group.

	If $G$ admits a normally aperiodic effectively closed subshift, $G$ has decidable word problem.
\end{theorem}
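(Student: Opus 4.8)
The plan is to reduce to the situation already handled in the torsion-free case and then detect membership in the word problem by an emptiness test. First I would invoke the preceding proposition to replace the given subshift by a normally aperiodic effectively closed subshift $X$ with the sharpest possible normal stabilizer, namely $\cap_{h \in G} Stab(hx) = \{\lambda\}$ for every $x \in X$. This is the crucial reduction: it turns the qualitative hypothesis of normal aperiodicity into the precise statement that no nontrivial element of $G$ lies in the normal stabilizer of any point. As before, $X$ lifts to a subshift $Y = \psi^{-1}(X) \subseteq A^{\mathbb{F}_p}$, which is effectively closed by Corollary~\ref{cor:per}, using that $G$ is recursively presented so that $Per_G$ is effective.

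Next, for a given $g \in \mathbb{F}_p$ I would \emph{not} test whether $g$ itself fixes some configuration (as in the torsion-free proof) but whether the whole conjugacy class of $g$ does. Concretely, set
\[
	W_g = \{ y \in A^{\mathbb{F}_p} \mid \forall h, t \in \mathbb{F}_p, \; y_{h^{-1}g^{-1}h t} = y_t \},
\]
the set of configurations fixed by every conjugate $h^{-1}gh$. Each clause $y_{h^{-1}g^{-1}ht} = y_t$ forbids finitely many two-point words, and these words can be enumerated as $h,t$ range over $\mathbb{F}_p$, so $W_g$ is effectively closed; hence so is $Y \cap W_g$.

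The heart of the argument is the equivalence that $Y \cap W_g = \emptyset$ if and only if $g \neq_G \lambda$. Using that $\psi$ is equivariant, $\psi(gy) = \phi(g)\psi(y)$, a point $y \in Y$ lies in $W_g$ exactly when $x = \psi(y) \in X$ is fixed by every conjugate of $\bar g = \phi(g)$, that is, when $\bar g \in \cap_{h \in G} Stab(hx)$. If $g =_G \lambda$ then $\bar g = \lambda$ lies in the (trivial but nonempty) normal stabilizer of every point, so $Y \cap W_g = Y \neq \emptyset$. If $g \neq_G \lambda$ then $\bar g$ is a nontrivial element of $G$, which by the triviality of the normal stabilizer cannot belong to $\cap_{h} Stab(hx)$ for any $x \in X$, so $Y \cap W_g = \emptyset$.

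Finally I would feed $Y \cap W_g$ into Proposition~\ref{prop:empty}: since this set is effectively closed, there is a semi-algorithm halting exactly when it is empty, i.e. exactly when $g \neq_G \lambda$. Thus the complement of the word problem is recognizable; as $G$ is recursively presented the word problem is itself recognizable, and a set whose membership and non-membership are both recognizable is decidable. The main obstacle is conceptual rather than computational: one must recognise that normal aperiodicity is precisely the hypothesis letting the \emph{normal closure} of $g$ play the role that $g$ alone played in the torsion-free argument, and that passing from $g$ to its whole conjugacy class keeps the relevant set effectively closed. Once the preceding proposition supplies a subshift with trivial normal stabilizer, the remainder is the same emptiness-test mechanism used earlier.
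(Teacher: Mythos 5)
Your proof is correct and essentially identical to the paper's: the set $W_g$ you define is exactly the paper's $Z = \{\, y \mid \forall h \in \mathbb{F}_p,\ ghy = hy \,\} = \{\, y \mid g \in \cap_{h}\,Stab(hy)\,\}$, and the reduction via the preceding proposition, the emptiness semi-test, and the final decidability argument all match. You merely spell out the equivariance of $\psi$ and the effectiveness of $W_g$ in slightly more detail than the paper does.
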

\begin{proof}
This is more or less the same proof as before, with one slight
difference.

Let $X$ be the normally aperiodic effectively closed subshift. We may
suppose by the previous proposition that for all $x \in X$, $\cap_{h\in G} Stab(hx) =
\{\lambda\}$.

$X$ lifts up to a subshift $Y$ on $A^{\mathbb{F}_p}$ with the following property:
If $y \in Y$, then $\cap_{h \in \mathbb{F}_p} Stab(hy) = R$.

Now	let $g \in \mathbb{F}_p$.
Let $Z = \{ y | \forall h \in \mathbb{F}_p, ghy = hy\} = \{ y | g \in \cap_{h\in \mathbb{F}_p} Stab(hy)\}$.
$Z$ is effective.
Furthermore $Y \cap Z = \emptyset$ iff $g \not=_G \lambda$.

Emptyness is recognizable, hence the complement of the word problem is
recognizable, hence decidable.

\end{proof}

\begin{cor}
Let $G$ be a recursively presented group that admits a strongly
aperiodic SFT. Then $G$ has a decidable word problem.	
\end{cor}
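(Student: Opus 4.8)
The plan is to deduce this corollary directly from Theorem~\ref{cor:final} by checking that a strongly aperiodic SFT is merely a special case of a normally aperiodic effectively closed subshift. Since the hypothesis on $G$ (recursively presented) is identical in both statements, it suffices to verify the two weakenings separately: that an SFT is effectively closed, and that strong aperiodicity implies normal aperiodicity.

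The first point is immediate, since it is already recorded in the definition of an SFT that any SFT is effectively closed. For the second point, I would argue that for every $x \in A^G$ the normal stabilizer is contained in the ordinary stabilizer: specialising the intersection to $h = \lambda$ gives $\cap_{h \in G} Stab(hx) \subseteq Stab(\lambda x) = Stab(x)$. Hence if $X$ is strongly aperiodic, so that $Stab(x)$ is finite for every $x \in X$, then each normal stabilizer, being a subset of a finite set, is also finite; that is, $X$ is normally aperiodic.

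Combining these two observations, any strongly aperiodic SFT on $G$ is in particular a nonempty normally aperiodic effectively closed subshift, and Theorem~\ref{cor:final} then yields that $G$ has decidable word problem. I do not expect any genuine obstacle here: the corollary is a pure specialisation, and the only mathematical content is the elementary containment of the normal stabilizer inside the stabilizer, which is exactly what makes strong aperiodicity the stronger of the two notions.
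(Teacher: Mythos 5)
Your proposal is correct and is exactly the (implicit) argument the paper intends: the corollary is stated without proof immediately after Theorem~\ref{cor:final}, precisely because an SFT is effectively closed by definition and the normal stabilizer $\cap_{h\in G}Stab(hx)\subseteq Stab(x)$ (take $h=\lambda$), so strong aperiodicity implies normal aperiodicity. Nothing is missing.
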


\section{Enumeration degrees}
\label{sec:enum}
In this section, we generalize the previous results to any finitely
generated groups, whose presentation might be not recursive.

Recall that the word problem of $G$ is the set 
\[
W(G) = \left\{ g \in \mathbb{F}_p \middle| g =_G \lambda \right\}
\]
If $G$ is any f.g. group, there are various ways that a subshift $X = S_L$
could be said to be computable in $G$:
\begin{itemize}
	\item We can enumerate a list of forbidden pseudo-words for $S$
	\item We can enumerate a list of forbidden pseudo-words for $S$
	  give an oracle for $W(G)$
	\item We can enumerate a list of forbidden pseudo-words for $S$
	  give a list of all elements of $W(G)$
\end{itemize}	
The last two are possibly different: In the last case, we are only
given access to positive informations, i.e. the elements that are in
$W(G)$, not those that aren't.

The definitions and results below involve the notion of 
\emph{enumeration reducibility} \cite{FriedbergRogers,Odifreddi2}.

Enumeration degrees, and enumeration reducibility, is a notion from
computability theory that is quite natural in the context of presented groups
and subshifts, as it captures  (in computable terms) the fact that the
only information we have about these objects are positive (or negative) information: In a subshift (effective or not), we usually have
ways to describe patterns that do not appear, but no procedure to
list patterns that appear. In a presented group, we have information
about elements that correspond to the identity element of the group,
but no easy way to prove that an element is different from the identity.

This reduction has been used already  in the context of groups
\cite[Chapter 6]{HigmanScott} or in symbolic dynamics \cite{AubrunS09}.

\subsection{Definitions}

If $A$ and $B$ are two sets of numbers (or words in $\mathbb{F}_p$), we say that
$A$ is enumeration reducible to $B$ if there exists an algorithm that
produces an enumeration of $A$ from any enumeration of $B$.
Formally:
\begin{defn}
   $A$ is enumeration reducible to $B$, written $A \leq_e B$, if there
   exists a partial computable function $f$ that associates to each $(n,i)$ a
   finite set $D_{n,i}$ s.t.
   $n \in A \iff \exists i, D_{n,i} \subseteq B$.
\end{defn}

We will first give here a few easy facts, and then examples relevant
to group theory and symbolic dynamics.

\begin{fact}
	$A$ is recursively enumerable iff $A \leq_e \emptyset$.
	
In particular, if $A$ is recursively enumerable, then $A \leq_e B$ for all $B$.
 
 $A \leq_e B \oplus \overline{B}$ iff $A$ is enumerable given $B$ as
 an oracle.
\end{fact}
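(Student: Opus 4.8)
The plan is to handle the three assertions in order, since the second follows immediately from the first, and the third is essentially independent.

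For the first equivalence the key observation is that the only finite subset of $\emptyset$ is $\emptyset$ itself, so $D_{n,i} \subseteq \emptyset$ is the same condition as $D_{n,i} = \emptyset$. For the direction $(\Rightarrow)$, assuming $A$ is recursively enumerable with an enumeration supplied by some machine, I would define a partial computable $f$ so that $f(n,i)$ outputs the set $D_{n,i} = \emptyset$ exactly when $n$ is produced at stage $i$ of the enumeration, and diverges otherwise; then $\exists i\, D_{n,i} = \emptyset$ holds iff $n$ is eventually enumerated, i.e. iff $n \in A$. For $(\Leftarrow)$, given such an $f$ witnessing $A \leq_e \emptyset$, I would dovetail the computations $f(n,i)$ over all pairs $(n,i)$ and output $n$ whenever $f(n,i)$ halts with value $\emptyset$; this is an effective enumeration of $A$.

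The second assertion is then immediate from the witness produced above: since the only sets $D_{n,i}$ that ever certify membership are empty, and $\emptyset \subseteq B$ for every $B$, the very same $f$ witnesses $A \leq_e B$ for an arbitrary $B$, no query to $B$ ever being performed.

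For the third assertion the conceptual point is that an enumeration oracle for the join $B \oplus \overline{B}$ carries exactly the same information as a decision oracle for $B$: an even code $2m$ lies in $B \oplus \overline{B}$ iff $m \in B$, and an odd code $2m+1$ lies in it iff $m \notin B$, so that a finite set of codes is a subset of $B \oplus \overline{B}$ precisely when all the membership and non-membership assertions it encodes are correct. For $(\Rightarrow)$, given a reduction $f$ with $n \in A \iff \exists i\, D_{n,i} \subseteq B \oplus \overline{B}$, I would build a $B$-oracle machine that dovetails over $(n,i)$, computes $D_{n,i}$, checks each of its elements against $B \oplus \overline{B}$ by a single query to the decision oracle $B$, and enumerates $n$ once every element is confirmed; this enumerates $A$ from oracle $B$. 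For $(\Leftarrow)$, I would start from an oracle machine $M$ enumerating $A$ relative to $B$ and read off, from each finite halting computation that outputs some $n$, the finite list of oracle queries together with the yes/no answers that computation assumed; encoding "$m$ answered yes" as $2m$ and "$m$ answered no" as $2m+1$ produces a finite set $D$, and indexing all such computations by $i$ yields the sets $D_{n,i}$. Correctness rests on the equivalence above: $n \in A$ iff some computation of $M^B$ outputs $n$, iff some assumed answer-pattern is consistent with the real oracle, iff $\exists i\, D_{n,i} \subseteq B \oplus \overline{B}$.

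The main obstacle is the backward direction of the third assertion: one must verify that restricting attention to finite halting sub-computations loses no information, i.e. that every accepting $B$-computation queries only finitely many oracle bits, and that a finite answer-pattern is a subset of $B \oplus \overline{B}$ if and only if it is a restriction of the true oracle. The use of the complementary codes $2m$ versus $2m+1$ is exactly what makes "consistency with $B$" expressible as a single subset condition, which is the crux of why the join $B \oplus \overline{B}$, rather than $B$ alone, must appear on the right-hand side.
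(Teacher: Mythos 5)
Your proof is correct and follows the standard argument: the paper states these as classical facts without proof, and your treatment --- observing that $D_{n,i}\subseteq\emptyset$ forces $D_{n,i}=\emptyset$ so the reduction degenerates to an ordinary enumeration, and encoding the finite answer-pattern of a halting oracle computation as a finite subset of the join for the third claim --- is exactly the expected one. (Minor remark: the paper's displayed definition of $B\oplus\overline{B}$ uses the tag $1$ for both halves, which is a typo; your even/odd coding implements the intended join.)
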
	
$B \oplus \overline{B}$ denotes the set $\{(1,x) | x \in B\} \cup \{(1,x) | x \not\in B\}$.

Here are some examples relevant to group theory:
\begin{fact} (Formal version) Let $G=\left<X | R\right>$ be a finitely generated group,
	  with $R \subseteq \mathbb{F}_p$ and $N$ be the normal subgroup of $\mathbb{F}_p$
	  generated by $R$. Then $N \leq_e R$.
	  In particular, if $R$ is finite, then $N$ (hence the word
	  problem over $G$) is recursively enumerable.
	  
	  (Informal version) From a presentation $R$ of a group, we can list
	  all elements that correspond to the identity element of the group
	  (but in general we cannot list elements that are not identity of the group).
	  In terms of reducibility, the set of all elements that correspond to the
 	  identity is the smallest possible presentation of a group.
\end{fact}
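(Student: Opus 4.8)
The plan is to exhibit directly a partial computable function $f$ witnessing $N \leq_e R$ in the sense of the definition of enumeration reducibility given above. The starting point is the standard description of the normal closure: an element $g \in \mathbb{F}_p$ lies in $N$ if and only if it can be written in $\mathbb{F}_p$ as a product $\prod_{k=1}^m u_k r_k^{\epsilon_k} u_k^{-1}$ of conjugates of members $r_k \in R$, where $u_k \in \mathbb{F}_p$ and $\epsilon_k \in \{+1,-1\}$. The crucial observation, and the one the informal version highlights, is that such a factorization is a finite object whose validity requires only the \emph{positive} information that each $r_k$ belongs to $R$, and never the negative information that some element fails to belong to $R$. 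This is exactly the situation that enumeration reducibility is designed to capture.

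Concretely, I would fix a computable coding of all tuples $(m, u_1, \dots, u_m, \epsilon_1, \dots, \epsilon_m, r_1, \dots, r_m)$ by indices $i$, where the $u_k$ and $r_k$ range over reduced words representing elements of $\mathbb{F}_p$. I declare $f(g,i)$ to be defined precisely when the tuple coded by $i$ satisfies $\prod_{k=1}^m u_k r_k^{\epsilon_k} u_k^{-1} = g$ in $\mathbb{F}_p$, and in that case I set $D_{g,i} = \{r_1, \dots, r_m\}$. This $f$ is partial computable because equality in the free group is decidable: on input $(g,i)$ the machine decodes $i$, forms and freely reduces the product, compares the result with the reduced form of $g$, and then either outputs the finite set $D_{g,i}$ or diverges.

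It then remains to verify the defining equivalence $g \in N \iff \exists i,\ D_{g,i} \subseteq R$. For the forward direction, any factorization of $g$ as a product of conjugates of elements of $R$ furnishes an index $i$ on which $f$ is defined and for which $D_{g,i} = \{r_1,\dots,r_m\} \subseteq R$. For the converse, if $D_{g,i} \subseteq R$ for some $i$, then $f(g,i)$ is defined, so the coded tuple really does multiply out to $g$ and has all of its $r_k$ in $R$; hence $g$ is a product of conjugates of elements of $R$ and so lies in $N$. This establishes $N \leq_e R$.

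For the ``in particular'' clause, I would note that when $R$ is finite the predicate $D_{g,i} \subseteq R$ becomes decidable, so $g \in N$ reduces to the existence of an index $i$ satisfying a decidable condition; semideciding over $i$ then enumerates $N$, making $N$ — and hence the word problem $W(G)$, which equals $N$ — recursively enumerable. (Equivalently, a finite $R$ gives $R \leq_e \emptyset$, and composing reductions yields $N \leq_e \emptyset$.) I do not anticipate a real obstacle here; the only point that needs genuine care is the one already flagged, namely checking that every query the reduction issues to $R$ is a positive membership query, which is what lets the finite sets $D_{g,i}$ be taken to consist solely of the relators $r_k$ occurring in the factorization.
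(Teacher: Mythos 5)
Your proposal is correct and follows the same route as the paper: both use the characterization of the normal closure $N$ as the set of products of conjugates of elements of $R\cup R^{-1}$, and observe that verifying such a factorization requires only positive membership information about $R$. The only difference is presentational — the paper states this in one sentence as "given any enumeration of $R$ we can enumerate $N$", whereas you unwind it into an explicit witness $f$ with finite sets $D_{g,i}$ matching the formal definition of $\leq_e$, which is a faithful elaboration rather than a different argument.
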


Indeed $g \in N$ iff there exists $g_1 \dots g_k \in \mathbb{F}_p,  u_1
\dots u_k \in R \cup R^{-1}$ so that $g = g_1 u_1 g_1^{-1} g_2  u_2 g_2^{-1} \dots g_k u_k g_k^{-1}$.
Given any enumeration of $R$ (and as $\mathbb{F}_p$ is enumerable), we can  therefore enumerate $N$.

Here are some examples relevant to symbolic dynamics or topology.
\begin{fact}
(Formal version)
	Let $S = S_L$ be any closed set.
	Then ${\cal L}(S) \leq_e L$.

In particular if $L$ is enumerable then ${\cal L}(S)$ is recursively enumerable.

(Informal version) From any description of a closed set in terms of some forbidden
words, we may obtain a list of all words that do not appear (but
usually not of patterns that appear). In terms of reducibility, 
the set of all words that do not appear is the smallest possible description of
a closed set.
\end{fact}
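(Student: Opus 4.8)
The plan is to reduce everything to testing emptiness of finitely-constrained closed sets, which is decidable, and then to invoke compactness exactly as in Proposition~\ref{prop:empty}. Work on $A^{\mathbb{F}_p}$. Fix a word $w$ with support $I$, and let $W$ be the finite set of all words over $I$ that are incompatible with $w$; note $W$ is computable from $w$. The first observation I would record is that a configuration $x$ agrees with $w$ (i.e.\ fails to disagree with it) precisely when $x$ disagrees with every word of $W$: if $x$ agrees with $w$ it differs from each incompatible $w' \in W$ somewhere on $I$, and conversely, were $x$ to differ from $w$ at some $g \in I$, the word recording the values of $x$ on $I$ would itself lie in $W$ while agreeing with $x$. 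Consequently
\[
 w \in {\cal L}(S) \iff S_{L \cup W}^{\mathbb{F}_p} = \emptyset,
\]
which is precisely the identity already used in the proof of the equivalence in the definition.

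Next I would apply compactness. By the second remark in the proof of Proposition~\ref{prop:empty}, $S_{L \cup W}^{\mathbb{F}_p}$ is empty iff some finite subset of $L \cup W$ already defines the empty set; since $W$ is finite and adding forbidden words only shrinks the set, we may keep all of $W$ and quantify only over finite subsets of $L$, obtaining
\[
 w \in {\cal L}(S) \iff \exists F \subseteq L \text{ finite}, \ S_{F \cup W}^{\mathbb{F}_p} = \emptyset.
\]
With this in hand the enumeration reduction writes itself. Enumerate the finite sets of words as $F_0, F_1, \dots$ and define a partial computable $f$ on pairs $(w,i)$ by: compute $W$ from $w$, run the decidable emptiness test for the finite constraint set $F_i \cup W$ (the first remark of Proposition~\ref{prop:empty}), and if it reports ``empty'' output $D_{w,i} = F_i$, otherwise leave $f(w,i)$ undefined. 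Then $D_{w,i} \subseteq L$ holds for some $i$ exactly when there is a finite $F_i \subseteq L$ with $S_{F_i \cup W}^{\mathbb{F}_p} = \emptyset$, which by the displayed equivalence is exactly the condition $w \in {\cal L}(S)$. This is the required reduction ${\cal L}(S) \leq_e L$, and the ``in particular'' clause follows immediately, since $A \leq_e \emptyset$ characterises recursive enumerability, so an enumerable $L$ forces ${\cal L}(S)$ to be recursively enumerable.

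The only points requiring care are that the emptiness test must be genuinely decidable on finite inputs (it is, being a finite search over configurations on the union of the relevant supports) and that one must retain the whole of $W$ while quantifying only over finite subsets of $L$. I do not expect a real obstacle here, since both facts are exactly the two remarks underpinning Proposition~\ref{prop:empty}; if anything, the subtle step is simply checking the agreement-versus-disagreement translation of the first paragraph, which is where a careless argument could introduce an error.
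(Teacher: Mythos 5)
Your proposal is correct and follows essentially the same route as the paper: the same reduction of $w \in {\cal L}(S)$ to emptiness of $S_{L \cup W}$, the same compactness step restricting to finite subsets of $L$ while retaining all of the finite set $W$, and the same enumeration of finite witness sets to define the reduction. The extra care you take with the agreement-versus-disagreement translation and with the decidability of the finite emptiness test is sound but adds nothing beyond the paper's argument.
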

Subshifts are particular closed sets, so this is also true for
subshifts. In particular the set of patterns that do not appear in a
SFT (over $\mathbb{Z}$, or $\mathbb{F}_p$) is recursively enumerable.
\begin{proof}
This is a straightforward generalization of Prop. \ref{prop:empty} and
the subsequent proof.

Let $w$ be any word, defined over a finite set $B$.
Let $W$ be the set of all words defined over $B$ that are incompatible
with $w$. Then $w \in {\cal L}(S)$ iff $S_{L\cup W} = \emptyset$.

Now $S_{L \cup W} = \emptyset$ iff there exists a finite set $L' \subseteq L$ s.t. $S_{L' \cup W} = \emptyset$.

Thus, if $(F(n,w))_{n \in \mathbb{N}}$ is the computable enumeration of
all finite sets of words s.t that $S_{F(n,w) \cup W} = \emptyset$, then $w \in  {\cal L}(S)$ iff $\exists n, F(n,w) \subseteq L$.
\end{proof}

\subsection{Generalizations}
Now we explain how this concept gives generalizations of the previous
theorems.

First, we look at subsets of $A^{\mathbb{F}_p}$  that are effective given
an enumeration of $B$. This definition is nonstandard:
\begin{defn}
A set $S \subseteq A^{\mathbb{F}_p}$ is \emph{$B$-enumeration-effective} if
$S = S_L$ for some set of words $L$ so that $L \leq_e B$.
\end{defn}

Here are a few examples:
\begin{itemize}
	\item $\{ x \in\{0,1\}^{\mathbb{F}_p} | \forall h \in B, x_h = 1\}$ is $B$-enumeration effective
	\item $\{ x \in\{0,1\}^{\mathbb{F}_p} | \forall g \in \mathbb{F}_p, \forall h \in B, x_{gh} = x_h\}$ is $B$-enumeration effective
	\item $\{ x \in\{0,1\}^{\mathbb{F}_p} | \forall h \not\in B, x_h = 1\}$ is
	  usually not $B$-enumeration effective. It is $B$-enumeration effective iff the complement of $B$ is enumeration reducible to $B$.
	  It happens for example whenever the complement of $B$ is
	  enumerable, regardless of the status of $B$.
\end{itemize}	
If we go back to the different ways to define a subshift from a
group given at the introduction of this section, they correspond to
three different notions of enumeration-effectiveness:
\begin{itemize}
	\item The first one corresponds to $\emptyset$-enumeration-effective
	\item The second one corresponds to $W(G) \oplus \overline{W(G)}$-enumeration-effective
	\item The third one corresponds to $W(G)$-enumeration-effective
\end{itemize}	
The second notion is what is called a $G$-effective subshift in the
vocabulary of \cite{AubBar}.

\begin{defn}
We will say	that $X \in A^{\mathbb{F}_p}$ is $G$-enumeration effective whenever $X$ is
	$W(G)$-enumeration effective.
\end{defn}	
\begin{proposition}[Analog of 	Prop~\ref{prop:eff:rec}]

	Let $A$ be an alphabet of size at least $2$.	
	Then $Per_G$ is $G$-enumeration effective.	
	More precisely, ${\cal L}(Per_G) \equiv_e W(G)$.
\end{proposition}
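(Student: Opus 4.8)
The plan is to establish the two enumeration reductions separately, since $\equiv_e$ is by definition the conjunction of $\leq_e$ in both directions, and then to read off $G$-enumeration-effectiveness from the reduction ${\cal L}(Per_G) \leq_e W(G)$ together with the identity $Per_G = S_{{\cal L}(Per_G)}$. The starting observation, already recorded in the text, is that ${\cal L}(Per_G)$ is exactly the set of words that are \emph{not} $G$-words: a word $w$ with finite support $I$ lies in ${\cal L}(Per_G)$ if and only if there exist $g,h \in I$ with $w_g \neq w_h$ and $g =_G h$. The key point is that this is a purely \emph{positive} existential condition over the relation $=_G$, and $g =_G h$ is equivalent to $g^{-1}h \in W(G)$; so membership in ${\cal L}(Per_G)$ should be enumerable from positive information about $W(G)$, and conversely $W(G)$ should be recoverable from ${\cal L}(Per_G)$.

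For ${\cal L}(Per_G) \leq_e W(G)$, I would exhibit the partial computable function $f$ required by the definition directly. Enumerate the (countably many, computably encodable) words $w$; for a word $w$ with support $I$, computably list the finitely many pairs $(g,h)$ with $g,h \in I$ and $w_g \neq w_h$, and for the $i$-th such pair set $D_{w,i} = \{ g^{-1} h \}$, a singleton subset of $\mathbb{F}_p$. Then $\exists i,\ D_{w,i} \subseteq W(G)$ holds exactly when some such pair satisfies $g^{-1}h \in W(G)$, i.e. $g =_G h$, which by the characterization above is precisely $w \in {\cal L}(Per_G)$. Since $I$ is finite, only finitely many $i$ are relevant and $f$ may be left undefined otherwise.

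For $W(G) \leq_e {\cal L}(Per_G)$, I would reuse the construction from the proof of Prop.~\ref{prop:eff:rec}. Fix $a \neq b$ in $A$, possible because $|A| \geq 2$. For $g \in \mathbb{F}_p$ with $g \neq \lambda$, let $w^{(g)}$ be the word on $\{\lambda, g\}$ with $w^{(g)}_\lambda = a$ and $w^{(g)}_g = b$; its only value-disagreeing pair is $\{\lambda, g\}$, so $w^{(g)}$ fails to be a $G$-word exactly when $\lambda =_G g$, i.e. $w^{(g)} \in {\cal L}(Per_G) \iff g \in W(G)$. I would therefore set $D_{g,0} = \{ w^{(g)} \}$ for $g \neq \lambda$, giving $g \in W(G) \iff \exists i,\ D_{g,i} \subseteq {\cal L}(Per_G)$. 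The one edge case is $g = \lambda$, where $\{\lambda, g\}$ collapses to a single point and $w^{(\lambda)}$ is not a well-defined word; since $\lambda \in W(G)$ always, I handle it separately by setting $D_{\lambda,0} = \emptyset$, which is trivially contained in ${\cal L}(Per_G)$.

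Combining the two reductions yields ${\cal L}(Per_G) \equiv_e W(G)$, and since $Per_G = S_{{\cal L}(Per_G)}$ with ${\cal L}(Per_G) \leq_e W(G)$, the set $Per_G$ is $W(G)$-enumeration effective, i.e. $G$-enumeration effective, as claimed. The argument is essentially routine; the only genuine points of care are the formal bookkeeping with the finite sets $D_{n,i}$ demanded by the definition of $\leq_e$ (fixing computable encodings of words and of $\mathbb{F}_p$ so that each $f$ is honestly partial computable) and the degenerate case $g=\lambda$ in the second reduction.
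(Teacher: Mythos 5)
Your proof is correct and is exactly the argument the paper intends: the paper states this proposition without proof as an ``analog'' of Prop.~\ref{prop:eff:rec}, and your two reductions are precisely the enumeration-reducibility relativization of that earlier proof (enumerating the non-$G$-words from the pairs $g^{-1}h \in W(G)$ witnessing $g =_G h$, and conversely recovering $W(G)$ via the two-point word $w_\lambda = a$, $w_g = b$). The handling of the degenerate case $g = \lambda$ and the final step deducing $G$-enumeration effectiveness from $Per_G = S_{{\cal L}(Per_G)}$ are both fine.
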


\begin{proposition}
If $X,Y$ are two closed sets in $A^\mathbb{F}_p$, then
	${{\cal L}(X \cap Y) \leq_e {\cal L}(X) \cup {\cal L}(Y)}$
\end{proposition}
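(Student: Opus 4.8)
The plan is to reduce the claim to the Fact already established earlier, that ${\cal L}(S_L) \leq_e L$ for any list of words $L$. The whole point is to recognise $X \cap Y$ as a closed set of the canonical form $S_L$, where $L$ is built in a trivial (indeed, union) way from ${\cal L}(X)$ and ${\cal L}(Y)$, and then invoke that Fact.

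First I would record the elementary identity $S_L \cap S_{L'} = S_{L \cup L'}$: unwinding the definition of \emph{disagree}, a configuration lies in $S_{L\cup L'}$ precisely when it disagrees with every word of $L$ and with every word of $L'$, which is exactly the conjunction of lying in $S_L$ and in $S_{L'}$. Combining this with the relation $S = S_{{\cal L}(S)}$, valid for every closed set, I obtain
\[
X \cap Y \;=\; S_{{\cal L}(X)} \cap S_{{\cal L}(Y)} \;=\; S_{{\cal L}(X) \cup {\cal L}(Y)}.
\]
In other words, $X \cap Y$ is the closed set $S_L$ associated with the list $L = {\cal L}(X) \cup {\cal L}(Y)$.

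It then remains only to apply the Fact ${\cal L}(S_L) \leq_e L$ with this choice of $L$. Since the left-hand side is ${\cal L}(X \cap Y)$ and the right-hand side is ${\cal L}(X) \cup {\cal L}(Y)$, this gives exactly ${\cal L}(X \cap Y) \leq_e {\cal L}(X) \cup {\cal L}(Y)$. The only step warranting real care — the \emph{main obstacle}, such as it is — is verifying the identity $X \cap Y = S_{{\cal L}(X) \cup {\cal L}(Y)}$, i.e. that the intersection of two closed sets is defined by the union of their forbidden-word lists; this is where I would re-check the definition of disagreement to be sure the membership condition for $S_{L\cup L'}$ is literally the conjunction of the two membership conditions, after which the enumeration reduction is immediate from the cited Fact.
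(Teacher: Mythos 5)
Your proof is correct and is essentially identical to the paper's: the paper's entire proof is the observation that $X \cap Y = S_{{\cal L}(X) \cup {\cal L}(Y)}$, with the Fact ${\cal L}(S_L) \leq_e L$ invoked implicitly. You have simply spelled out the same two steps in more detail.
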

\begin{proof}
$X \cap Y = S_{{\cal L}(X) \cup {\cal L}(Y)}$.
\end{proof}	
\begin{cor}[Analog of Cor.~\ref{cor:per}]
	If $X$ is an effectively closed set of $A^G$,then  ${\cal L}(X) \leq_e W(G)$.
	More generally, if $X = S^G_L$ then ${\cal L}(X) \leq_e L \oplus W(G)$.
\end{cor}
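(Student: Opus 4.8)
The plan is to reduce the statement about $A^G$ to one about $A^{\mathbb{F}_p}$, where the tools of the previous section apply directly. Recall from the Fact relating the two settings that ${\cal L}(S^G_L) = {\cal L}(S_L^{\mathbb{F}_p} \cap Per_G)$, so that computing ${\cal L}(X)$ for $X = S^G_L$ amounts to understanding the language of an intersection of two closed sets of $A^{\mathbb{F}_p}$: the ``free'' closed set $S_L^{\mathbb{F}_p}$, and the $G$-consistency constraint $Per_G$.

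First I would invoke the intersection proposition proved just above, which gives ${\cal L}(S_L^{\mathbb{F}_p} \cap Per_G) \leq_e {\cal L}(S_L^{\mathbb{F}_p}) \cup {\cal L}(Per_G)$. Next I would bound each of the two pieces separately: the symbolic-dynamics Fact gives ${\cal L}(S_L^{\mathbb{F}_p}) \leq_e L$, and the analog of Prop.~\ref{prop:eff:rec} gives ${\cal L}(Per_G) \equiv_e W(G)$. Combining these, together with transitivity of $\leq_e$ and the (routine) monotonicity property that $A \leq_e C$ and $B \leq_e D$ imply $A \cup B \leq_e C \oplus D$, yields
\[
{\cal L}(X) = {\cal L}(S_L^{\mathbb{F}_p} \cap Per_G) \leq_e {\cal L}(S_L^{\mathbb{F}_p}) \cup {\cal L}(Per_G) \leq_e L \oplus W(G),
\]
which is exactly the general statement.

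To obtain the first, sharper claim, I would specialize to the case where $X$ is effectively closed, i.e.\ where $L$ can be taken recursively enumerable. Then $L \leq_e \emptyset$, and since an r.e.\ set is enumeration reducible to anything, $L \leq_e W(G)$; hence $L \oplus W(G) \equiv_e W(G)$ and the bound collapses to ${\cal L}(X) \leq_e W(G)$.

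The argument is essentially a bookkeeping exercise once the three ingredients are in place, so I do not expect a serious conceptual obstacle. The one point requiring care is the behaviour of enumeration reducibility under the set operations $\cup$ and $\oplus$: unlike Turing reducibility, $\leq_e$ is only closed under these operations in the ``positive'' direction, and one must check that the two separate enumeration reductions (for $L$ and for $W(G)$) can be run in parallel on a single oracle enumeration of $L \oplus W(G)$ by tagging each queried finite set with the component it is meant to test. This is standard, but it is the only place where the monotone, positive-information character of $\leq_e$ is actually used.
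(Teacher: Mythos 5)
Your argument is correct and follows essentially the same route as the paper, whose proof is the one-line chain ${\cal L}(S^G_L) = {\cal L}(S^{\mathbb{F}_p}_L \cap Per_G) \leq_e L \cup {\cal L}(Per_G) \leq_e L \oplus W(G)$; you merely insert the intermediate bound ${\cal L}(S_L^{\mathbb{F}_p}) \leq_e L$ and make the bookkeeping about $\cup$ versus $\oplus$ explicit. The specialization to the effectively closed case via $L \leq_e \emptyset$ also matches the paper's intent.
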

\begin{proof}	
${\cal L}(X) = {\cal L}(S^G_L) = {\cal L}(S^{\mathbb{F}_p}_L \cap Per_G) \leq_e L \cup {\cal L}(Per_G) \leq_e L \oplus W(G)$.
\end{proof}	

\begin{proposition}[Analog of Prop.~\ref{prop:leq1}]
If  $X_{\leq 1}$ is effectively closed (in particular if it is an SFT) then $\overline{W(G)}  \leq_e W(G)$
\end{proposition}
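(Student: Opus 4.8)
The statement is the enumeration-degree analog of Prop.~\ref{prop:leq1}, which handled the recursively presented case. Let me first recall the structure of that earlier proof so I can adapt it. There, $X_{\leq 1}$ lifted to a subshift $Y$ on $A^{\mathbb{F}_p}$, and for a word $w$ with $w_\lambda = w_g = 1$, membership $w \in {\cal L}(Y)$ was equivalent to $g \neq_G \lambda$. That argument used recognizability of the word problem twice: once to ensure ${\cal L}(Y)$ is enumerable, and once to lift $X_{\leq 1}$ to $Y$. The point of the present statement is that we now make no recursive-presentation assumption, so ``$\overline{W(G)}$ enumerable'' is too strong to hope for — we only aim for $\overline{W(G)} \leq_e W(G)$, i.e. that negative word-problem information is enumeration-reducible to positive word-problem information.

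Let me sketch how I would carry it out. The plan is to reduce everything to facts already proved in the excerpt. First I would note that $X_{\leq 1}$ is a subshift of $A^G$ (this is asserted in the text), and by hypothesis it is effectively closed, so $X_{\leq 1} = S^G_L$ for some recursively enumerable $L$, i.e. with $L \leq_e \emptyset$. By the Corollary that is the ``Analog of Cor.~\ref{cor:per}'', we immediately get
\[
  {\cal L}(X_{\leq 1}) \leq_e L \oplus W(G) \leq_e W(G),
\]
where the last step uses that $L$ is enumerable, hence enumeration-reducible to anything. So the language of the subshift, viewed over $\mathbb{F}_p$, is enumeration-reducible to $W(G)$. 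This is exactly the ingredient that replaces ``${\cal L}(Y)$ enumerable'' from the recursively-presented proof.

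The remaining step is to exhibit an explicit enumeration reduction from $\overline{W(G)}$ to ${\cal L}(X_{\leq 1})$, and then compose. For $g \in \mathbb{F}_p$, I would take the word $w^{(g)}$ defined by $w^{(g)}_\lambda = 1$ and $w^{(g)}_g = 1$ (over alphabet $\{0,1\} \subseteq A$), and argue that $w^{(g)} \in {\cal L}(X_{\leq 1})$ iff $g \neq_G \lambda$: if $g \neq_G \lambda$ then $\lambda$ and $\phi(g)$ are distinct group elements, so no configuration can carry a $1$ at both without violating the at-most-one-$1$ constraint, forcing disagreement with $w^{(g)}$; conversely if $g =_G \lambda$ then the configuration that is $1$ exactly at $\lambda = \phi(g)$ lies in $X_{\leq 1}$ and agrees with $w^{(g)}$. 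Since $g \mapsto w^{(g)}$ is computable, this gives $\overline{W(G)} \leq_e {\cal L}(X_{\leq 1})$, and composing with ${\cal L}(X_{\leq 1}) \leq_e W(G)$ (using transitivity of $\leq_e$) yields $\overline{W(G)} \leq_e W(G)$.

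The main subtlety — the step I would be most careful about — is the equivalence ``$w^{(g)} \in {\cal L}(X_{\leq 1})$ iff $g \neq_G \lambda$'' at the level where $g$ ranges over $\mathbb{F}_p$ but the constraint lives in $A^G$. One must check that disagreement of every configuration in $X_{\leq 1}$ with $w^{(g)}$ really is governed by whether $\phi(g) = \phi(\lambda)$ in $G$, not by the formal word $g$; the at-most-one-$1$ condition is a condition on $G$-configurations, so the relevant coincidence is the $G$-equality, which is precisely what makes $\overline{W(G)}$ (rather than some $\mathbb{F}_p$-level object) appear. Everything else is a routine application of the already-established enumeration-reducibility machinery, so I do not expect further obstacles.
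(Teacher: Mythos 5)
Your proposal is correct and follows essentially the same route as the paper: establish ${\cal L}(X_{\leq 1}) \leq_e W(G)$ (the paper phrases this as the lift $Y$ being $G$-enumeration effective, which is the same fact via ${\cal L}(S_L^G) = {\cal L}(S_L^{\mathbb{F}_p} \cap Per_G)$ and the analog of Cor.~\ref{cor:per}), then use the uniform family of words $w^{(g)}$ from Prop.~\ref{prop:leq1} to get $\overline{W(G)} \leq_e {\cal L}(X_{\leq 1})$, and compose. Your write-up is in fact more careful than the paper's (which contains a slip, writing $W(G) \leq_e {\cal L}(Y)$ where $\overline{W(G)}$ is meant), but there is no substantive difference in approach.
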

\begin{proof}
$X_{\leq 1}$ lifts up to a subshift $Y$  of $A^{\mathbb{F}_p}$ which is
$G$-enumeration effective.

By the proof of Prop.~\ref{prop:leq1}, there exists a uniform family $w_g$ of words so that
$g \not= \lambda$ iff $w_g \in {\cal L}(X)$.
This implies easily that $W(G) \leq_e {\cal L}(Y)$.
\end{proof}

\begin{theorem}[Analog of Th.~\ref{cor:final}]
	If $G$ admits a normally aperiodic effective subshift $X$, then
	$\overline{W(G)} \leq_e  W(G)$.
\end{theorem}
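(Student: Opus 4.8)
The plan is to mirror the proof of Theorem~\ref{cor:final}, replacing ``effective'' by ``$W(G)$-enumeration effective'' and replacing ``recognizable emptiness'' by the enumeration-reducibility statement provided by the $B$-enumeration-effective framework. First I would invoke the structural reduction that was used in the recursively presented case: there I built, starting from any normally aperiodic subshift, a new normally aperiodic subshift all of whose points have trivial normal stabilizer. I expect that argument to go through verbatim here, since it was purely group-theoretic and topological (a chain-of-normal-subgroups compactness argument together with the product construction $Z \times Y$), making no use of decidability of the word problem. So I may assume the given $X$ already satisfies $\cap_{h \in G} Stab(hx) = \{\lambda\}$ for all $x \in X$.

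Next I would lift $X$ to a subshift $Y \subseteq A^{\mathbb{F}_p}$. The key point is that $Y$ is now only $G$-enumeration effective rather than outright effective: by the Corollary analog (${\cal L}(X) \leq_e W(G)$ for effectively closed $X$ on $G$), together with the fact that lifting intersects with $Per_G$ and ${\cal L}(Per_G) \equiv_e W(G)$, we get ${\cal L}(Y) \leq_e W(G)$. The lifted subshift inherits the stabilizer bookkeeping from the recursively presented proof: for $y \in Y$ with $\psi(y) = x$ we have $\cap_{h \in \mathbb{F}_p} Stab(hy) = R$, where $R = W(G)$ is the normal subgroup defining $G$.

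Then, exactly as in Theorem~\ref{cor:final}, for each $g \in \mathbb{F}_p$ I would set $Z_g = \{ y \mid \forall h \in \mathbb{F}_p,\ ghy = hy\} = \{ y \mid g \in \cap_{h \in \mathbb{F}_p} Stab(hy)\}$. Each $Z_g$ is effective (indeed $\emptyset$-enumeration effective, uniformly in $g$), and the stabilizer computation gives that $Y \cap Z_g = \emptyset$ if and only if $g \not=_G \lambda$, i.e. $g \in \overline{W(G)}$. The heart of the argument is then to convert ``$Y \cap Z_g = \emptyset$'' into an enumeration reduction to $W(G)$. Here I would use the general emptiness principle behind the $B$-enumeration-effective machinery: since $Y$ is $W(G)$-enumeration effective and each $Z_g$ is a uniform effective family, the set $\{ g \mid Y \cap Z_g = \emptyset\}$ is enumeration reducible to $W(G)$ (this is the enumeration-degree analog of Prop.~\ref{prop:empty}, obtained by combining ${\cal L}(Y) \leq_e W(G)$ with ${\cal L}(Y \cap Z_g) \leq_e {\cal L}(Y) \cup {\cal L}(Z_g)$ and a compactness argument testing emptiness against finite subsets). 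Since that set equals $\overline{W(G)}$, we conclude $\overline{W(G)} \leq_e W(G)$.

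The main obstacle I anticipate is making the emptiness-to-enumeration-reduction step fully rigorous in a uniform way: I must check that the witness families $Z_g$ can be handled uniformly in $g$, and that the compactness argument (``$Y \cap Z_g = \emptyset$ iff some finite subfamily of the defining words already forces emptiness'') composes correctly with the enumeration reduction ${\cal L}(Y) \leq_e W(G)$ to yield a single enumeration operator witnessing $\overline{W(G)} \leq_e W(G)$. In the excerpt the cleanest route is through the (commented-out) Prop.~\ref{prop:change}(\ref{prop:v})-style statement, so I would either reinstate that uniform emptiness lemma or inline the needed compactness-plus-composition argument directly, taking care that the enumeration operator's finite sets $D_{g,i}$ are built effectively from the enumeration data of ${\cal L}(Y)$ and the effective descriptions of the $Z_g$.
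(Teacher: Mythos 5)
Your proposal is correct and follows essentially the same route as the paper: reduce to trivial normal stabilizers, lift to a $G$-enumeration effective subshift $Y$ on $\mathbb{F}_p$ with a uniform effective family $Z_g$ witnessing $g\not=_G\lambda$ via emptiness of $Y\cap Z_g$, and then convert emptiness into an enumeration reduction by enumerating, uniformly in $g$, the finite sets of words that together with finitely many forbidden words of $Z_g$ force emptiness. The ``compactness-plus-composition'' step you flag as the main obstacle is exactly what the paper inlines via the enumeration $(F(n,g))_{n\in\NN}$ with $g\not=_G\lambda \iff \exists n,\ F(n,g)\subseteq {\cal L}(Y)$.
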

\begin{proof}
  From the proof of Th.~\ref{cor:final}, there exists a
  $G$-enumeration effective subshift $Y$ on $\mathbb{F}_p$, and a
  (uniform) family of effectively closed sets $X_g$  so that $Y \cap X_g = \emptyset$ iff $g  \not= \lambda$.
   
Let $(F(n,g))_{n \in \mathbb{N}}$ be a computable enumeration of all finite sets
$F$ for which there exists $G \subseteq L_g$ so that $S_{F \cup G} = \emptyset$ (this can indeed be enumerated as $L_g$ can be enumerated).

Then $g \neq_G \lambda $ iff $\exists n F(n,g) \subseteq {\cal L}(Y)$, hence $\overline{W(G)} \leq_e {\cal L}(Y)$
  \end{proof}	

Hence the existence of a strongly aperiodic SFT implies that the
complement of the word problem can be enumerated from the word
problem. This is not a vacuous hypothesis: If $G$ is recursively
presented (hence $W(G)$ is recursively enumerable), this implies that
$\overline{W(G)}$ is also recursively enumerable, hence recursive.

Another way of stating the result is that, for a group having this
property, the notion of $G$-enumeration effectiveness and
$G$-effectiveness coincide.

\section{Aperiodic SFTs on monster groups}

In this section we exhibit examples of aperiodic SFTs on some monster
groups.
All our examples are trivial and somewhat degenerate. Nevertheless
these are aperiodic SFT.

\paragraph{Simple groups}
Our last section introduces a computability obstruction that must
satisfy all groups $G$ that admits an aperiodic SFT: The complement of
the word problem must be enumeration reducible to the word problem.

There are a few well known algebraic classes of groups that satisfy
this property: In particular, all f.g. simple groups have this property.

\begin{proposition}
If $G$ is a f.g. simple group, then $G$ admits  a normally aperiodic SFT
\end{proposition}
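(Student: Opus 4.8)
The plan is to reduce normal aperiodicity, for a simple group, to the single requirement that the SFT contain no \emph{constant} configuration. The key observation is purely group-theoretic. For any $x$, one has $Stab(hx)=h\,Stab(x)\,h^{-1}$, so the normal stabilizer $\cap_{h\in G}Stab(hx)=\cap_{h\in G} h\,Stab(x)\,h^{-1}$ is exactly the normal core of $Stab(x)$; as already recorded in the text, it is a normal subgroup of $G$. Since $G$ is simple, this normal subgroup is either $\{\lambda\}$ or all of $G$. In the first case it is finite, which is precisely what normal aperiodicity demands. The second case $\cap_{h\in G}Stab(hx)=G$ forces $G\subseteq Stab(x)$, i.e. $gx=x$ for every $g\in G$; evaluating $(gx)_\lambda=x_{g^{-1}}=x_\lambda$ as $g$ ranges over $G$ shows that $x$ is the constant configuration. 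Hence it suffices to produce a nonempty SFT on $G$ none of whose configurations is constant.

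First I would dispose of the finite case: if $G$ is finite, every stabilizer, and a fortiori every normal stabilizer, is a finite subgroup of $G$, so the full shift $A^G$ (the SFT with $L=\emptyset$) is already normally aperiodic. So assume $G$ is infinite, and fix a generating set $S=\{s_1,\dots,s_p\}$ consisting of non-identity elements, discarding any generator equal to $\lambda$. I then build the ``proper colouring'' SFT: take $A=\{0,1,\dots,2p\}$ and let $L$ be the finite set of words $w$ with $w_\lambda=w_{s_i}=a$ for some $i$ and some $a\in A$. The SFT $X=S_{\{g^{-1}w:\ g\in G,\ w\in L\}}$ consists exactly of the proper vertex-colourings of the Cayley graph $Cay(G,S)$, two vertices being adjacent when they differ by a generator. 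Since every $s_i\neq_G\lambda$, a constant configuration would colour the two endpoints of a genuine edge identically, so $X$ contains no constant configuration. Combined with the reduction above, this gives at once: for every $x\in X$ the normal stabilizer is a proper normal subgroup of $G$, hence trivial, hence finite, so $X$ is normally aperiodic.

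The only remaining content is the nonemptiness of $X$, i.e. that $Cay(G,S)$ admits a proper colouring with $2p+1$ colours, and this is where I expect the (entirely standard) obstacle to sit. The Cayley graph has maximum degree at most $2p$ — each vertex $g$ is joined only to the $g s_i$ and $g s_i^{-1}$ — so every finite subgraph is greedily $(2p+1)$-colourable; by the De Bruijn--Erd\H{o}s theorem the whole graph is $(2p+1)$-colourable, and such a colouring is literally a point of $X$. Equivalently one may argue nonemptiness by the compactness of $A^{G}$ exactly as in Proposition \ref{prop:empty}: no finite subfamily of the constraints in $\{g^{-1}w\}$ is unsatisfiable, so $X\neq\emptyset$.

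This completes the construction, and makes transparent why the example is ``degenerate'': the SFT is nothing more than a proper colouring of the Cayley graph, and simplicity does all the remaining work by collapsing every normal stabilizer to $\{\lambda\}$ or $G$, the latter being excluded precisely by forbidding monochromatic edges.
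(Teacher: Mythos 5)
Your proof is correct and is essentially the paper's argument: a proper-colouring SFT on the Cayley graph rules out the full stabilizer, and simplicity collapses the normal stabilizer to $\{\lambda\}$. The paper's version is just leaner --- it colours only the edges coming from a single nontrivial element $a$ (so $3$ colours suffice and nonemptiness is immediate, the cosets of $\langle a\rangle$ being cycles or lines) and concludes directly from $a^{-1}x\neq x$ rather than via the characterisation of configurations with normal stabilizer equal to $G$ as the constant ones.
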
	
\begin{proof}
Fix some given nontrivial element $a$ of $G$. And define
	\[ X = \left\{ x \in \{0,1,2\}^G \middle| \forall g\in G, (gx)_{a}   \not= (gx)_\lambda \right\}\]
Thus $X$ is an SFT. It is easy to see that $X$ is nonempty.

Now let $x \in X$.
Then $(a^{-1} x)_\lambda = x_{a} \not= x_\lambda$. Thus $a^{-1} x
\not= x$, which means the stabilizer of $x$ is not the whole group. A
fortiori, the normal stabilizer of $x$ is also not the whole group: $\cap_{h \in G} Stab(hx) \not= G$.
As the left term is a normal subgroup of $G$ and $G$ is simple, this implies $\cap_{h
  \in G} Stab(hx) = \{ \lambda\}$.
\end{proof}
The proof works verbatim for any group $G$ with no infinite normal
subgroups.

\paragraph{Monster groups}

The previous SFT is only normally aperiodic. For monster groups
(which are actually examples of simple groups), we can go further, and
obtain strongly aperiodic SFT.

We will use three classes of monster groups:
\begin{itemize}
	\item Tarski monster groups \cite{Olshanskii}, for which  all nontrivial proper subgroups are
of finite order $p$
    \item The Osin monster groups \cite{Osin}, for which any two nonzero element
	  are conjugate.
	\item The Ivanov groups \cite[Th. 41.2]{Olshanskiibook}, for which every element is cyclic and
	  which contains a finite number of conjugacy classes.	  	  	  
\end{itemize}	
Note that all these groups  are f.g. and simple.

\begin{proposition}
	\label{prop:tarski}x
	If $G$ is a Tarski monster group, then $G$ admits a strongly
	aperiodic SFT.
\end{proposition}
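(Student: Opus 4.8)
The plan is to reuse verbatim the SFT constructed in the previous proposition for simple groups, and to observe that for a Tarski monster the weak conclusion it provides (trivial \emph{normal} stabilizer) upgrades for free to a genuine finiteness statement on the full stabilizer. Concretely, I would fix a nontrivial element $a \in G$ and set $X = \left\{ x \in \{0,1,2\}^G \middle| \forall g \in G, (gx)_a \neq (gx)_\lambda \right\}$, exactly as before. Since a Tarski monster is f.g.\ and simple, this $X$ is a nonempty SFT by the preceding proposition, so the only thing left to check is the size of the individual stabilizers $Stab(x)$.

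First I would recall that $Stab(x)$ is always a subgroup of $G$, and that the defining constraint already excludes $a^{-1}$: for any $x \in X$ the choice $g = \lambda$ gives $x_a \neq x_\lambda$, whence $(a^{-1}x)_\lambda = x_a \neq x_\lambda$, so $a^{-1} \notin Stab(x)$ and in particular $Stab(x) \neq G$. The decisive second step is to invoke the defining property of a Tarski monster group, namely that every nontrivial proper subgroup has finite order $p$. Because $Stab(x)$ is a \emph{proper} subgroup of $G$, it is therefore either trivial or cyclic of order $p$, and in both cases finite. As this holds for every $x \in X$, the subshift $X$ is strongly aperiodic in the sense adopted in this paper, and the argument is finished.

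In this formulation there is essentially no obstacle: the entire proof is the remark that in a Tarski monster the word ``proper'' already forces ``finite''. The hard part would appear only under the stricter convention $Stab(x) = \{\lambda\}$ for all $x$, since the construction above still permits $Stab(x)$ to be a cyclic subgroup of order $p$; ruling out every such periodicity at once is delicate, because a Tarski monster contains infinitely many order-$p$ subgroups whose conjugacy behaviour cannot be governed by a single forbidden pattern. This is exactly the distinction flagged after the definition of strong aperiodicity, and it is why this proposition is the one place in the paper where the choice of definition genuinely matters.
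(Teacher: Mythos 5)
Your proposal is correct and follows exactly the paper's argument: reuse the simple-group SFT $X = \{x \in \{0,1,2\}^G \mid \forall g, (gx)_a \neq (gx)_\lambda\}$, note that $Stab(x)$ is a proper subgroup since $a^{-1} \notin Stab(x)$, and conclude finiteness from the Tarski monster property that all proper subgroups are finite. Your closing remark about the stricter convention $Stab(x) = \{\lambda\}$ is also precisely the caveat the paper flags after its definition of strong aperiodicity.
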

\begin{proof}
	We take again
	\[ X = \{ x \in \{0,1,2\}^G | \forall g\in G, (gx)_{a} \not= (gx)_\lambda \}\]
	for some $a \not= \lambda$.

    The stabilizer of any element of $X$ is not $G$, thus should be
	finite.
\end{proof}

In the previous example, the stabilizer of any point is finite, but
may be nontrivial.
In the two following examples, the stabilizer is trivial.

\begin{proposition}
	If $G$ is a Osin monster group, then $G$ admits a strongly
	aperiodic SFT s.t the stabilizer of any point is trivial.
\end{proposition}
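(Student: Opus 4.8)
The plan is to reuse verbatim the SFT from the f.g. simple group case and to upgrade the conclusion from ``the stabilizer is proper'' to ``the stabilizer is trivial'' by exploiting the defining feature of an Osin monster, namely that any two nontrivial elements are conjugate. Concretely, I would fix a nontrivial $a \in G$ and set $X = \{ x \in \{0,1,2\}^G \mid \forall g \in G,\ (gx)_a \neq (gx)_\lambda \}$. As in the simple group proposition this is an SFT, and since $G$ is in particular simple the nonemptiness is already guaranteed there, so the only thing left to establish is the stabilizer condition.

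First I would record the same elementary computation used in the Tarski case: for every $x \in X$, taking $g = \lambda$ in the defining constraint gives $x_a \neq x_\lambda$, and since $(a^{-1}x)_\lambda = x_a$, we get $a^{-1}x \neq x$, i.e. $a^{-1} \notin Stab(x)$. Thus the single fixed element $b := a^{-1}$ lies outside $Stab(x)$ for \emph{every} $x \in X$ simultaneously.

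The main step is then a conjugation argument combined with shift-invariance. I would use the identity $Stab(hx) = h\,Stab(x)\,h^{-1}$, which follows directly from $(gh)x = hx \iff (h^{-1}gh)x = x$ for the action $(gx)_h = x_{g^{-1}h}$. Now suppose some $x \in X$ had a nontrivial $g \in Stab(x)$. Because $G$ is an Osin monster, $g$ is conjugate to $b$, say $g = h b h^{-1}$; then $gx = x$ yields $b\,(h^{-1}x) = h^{-1}x$, so $b \in Stab(h^{-1}x)$. Since $X$ is a subshift, $h^{-1}x \in X$, contradicting the previous paragraph. Hence $Stab(x) = \{\lambda\}$ for every $x \in X$, which is exactly strong aperiodicity with trivial stabilizers.

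The one point requiring care — and the only place the Osin hypothesis genuinely enters — is the passage from ``$b$ is avoided by every stabilizer'' to ``every nontrivial element is avoided by every stabilizer''. This is precisely what the two-conjugacy-class structure buys us: avoiding one representative of the unique nontrivial conjugacy class, uniformly over all points of a shift-invariant set, automatically rules out the entire class. I expect no real obstacle beyond keeping the left/right conventions in $Stab(hx) = h\,Stab(x)\,h^{-1}$ consistent with the convention $(gx)_h = x_{g^{-1}h}$.
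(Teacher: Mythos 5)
Your proposal is correct and takes essentially the same route as the paper: both fix the SFT $X = \{x \mid \forall g,\ (gx)_a \neq (gx)_\lambda\}$, use the Osin property to conjugate a hypothetical nontrivial stabilizer element to a fixed nontrivial element determined by $a$, and derive a contradiction with the defining constraint at a shifted configuration. The paper conjugates $g$ to $a$ and computes $(hx)_a = (hx)_\lambda$ directly, whereas you conjugate to $a^{-1}$ and invoke $Stab(hx) = h\,Stab(x)\,h^{-1}$ together with the pre-established fact that $a^{-1}$ is avoided by every stabilizer; this is only a cosmetic reorganization of the same argument.
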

\begin{proof}
	We take again
	\[ X = \{ x \in \{0,1,2\}^G | \forall g\in G, (gx)_{a} \not= (gx)_\lambda \}\]
	for some $a \not= \lambda$.
	
Now let $x \in X$.
To prove that $Stab(x) = \{\lambda\}$, let $g \in G$, $g \not= \lambda$ s.t.  $gx = x$.
By definition of the Osin monster group, $g$ is conjugate to $a$: $hgh^{-1} = a$ for some $h$, or equivalenty $hg = ah$.

Now $(hx)_a = (hgx)_a = (ahx)_a = (hx)_{a^{-1}a} = (hx)_\lambda$. Thus
$(hx)_a = (hx)_\lambda$, contradicting the fact that $x \in X$.

Thus $Stab(x) = \{\lambda\}$.
\end{proof}	

\begin{proposition}
	If $G$ is a Ivanov monster group, then $G$ admits a strongly
	aperiodic SFT s.t the stabilizer of any point is trivial.	
\end{proposition}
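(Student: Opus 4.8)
The plan is to adapt the Osin argument, replacing the single distinguished element $a$ by a finite family of conjugacy class representatives. Since $G$ has only finitely many conjugacy classes, let $a_1,\dots,a_n$ be representatives of its nontrivial conjugacy classes. For a finite alphabet $A$ to be fixed later, I would set
\[ X = \left\{ x \in A^G \;\middle|\; \forall g\in G,\ \forall i \in \{1,\dots,n\},\ (gx)_{a_i} \neq (gx)_\lambda \right\}. \]
As in the previous propositions, this set is cut out by finitely many shift-invariant local constraints, so it is an SFT. Unwinding the shift action, membership $x \in X$ is equivalent to the purely combinatorial condition $x_{ha_i} \neq x_h$ for every $h \in G$ and every $i$; that is, $x$ is a proper colouring of the graph $\Gamma$ on vertex set $G$ whose edge set consists of the pairs $\{h, ha_i\}$.

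The first thing to check is that $X$ is nonempty, and here — unlike the Osin case, where a single element gave a degree-$2$ graph colourable with three symbols — the number of constraints forces a larger alphabet. The graph $\Gamma$ has maximum degree at most $2n$, since a vertex $h$ is joined only to the elements $ha_i$ and to the elements $ha_i^{-1}$ (and $a_i \neq \lambda$ excludes loops). A locally finite graph of maximum degree $d$ is properly $(d+1)$-colourable: every finite subgraph is $d$-degenerate, hence greedily $(d+1)$-colourable, and de Bruijn–Erd\H{o}s lifts this to all of $G$. Taking $A = \{0,1,\dots,2n\}$ therefore guarantees that a proper colouring of $\Gamma$ exists, so $X \neq \emptyset$.

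For strong aperiodicity with trivial stabiliser, I would argue exactly as in the Osin proof, but selecting the representative matching the fixing element. Let $x \in X$ and suppose some $g \neq \lambda$ satisfies $gx = x$. Being nontrivial, $g$ lies in one of the chosen conjugacy classes, so there exist $h \in G$ and an index $i$ with $hgh^{-1} = a_i$, equivalently $hg = a_i h$. Then, using $gx = x$ for the first equality and $hg = a_i h$ for the second,
\[ (hx)_{a_i} = (hgx)_{a_i} = (a_i h x)_{a_i} = (hx)_{a_i^{-1}a_i} = (hx)_\lambda, \]
which contradicts the defining inequality of $X$ taken at the group element $h$. Hence no nontrivial $g$ fixes $x$, and $Stab(x) = \{\lambda\}$ for every $x \in X$.

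The only genuinely new ingredient compared with the Osin case is the nonemptiness step, and I expect this to be the main (if still routine) obstacle: one must bound the chromatic number of $\Gamma$ and invoke a colouring-extension theorem for infinite graphs, since the finite-alphabet requirement is precisely what makes $X$ an SFT rather than merely an effectively closed subshift. Finiteness of the number of conjugacy classes is exactly what keeps $n$, and hence the alphabet, finite. The aperiodicity computation itself is a verbatim transcription of the Osin argument applied to whichever representative $a_i$ the element $g$ happens to be conjugate to.
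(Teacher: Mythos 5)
Your proposal is correct and follows essentially the same route as the paper: the same SFT on the alphabet $\{0,1,\dots,2n\}$ built from representatives of the nontrivial conjugacy classes, with the Osin argument repeated for whichever $a_i$ the fixing element is conjugate to. The paper simply asserts nonemptiness as clear; your degree-$2n$ graph-colouring justification via compactness is the intended reason and makes that step explicit.
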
	
\begin{proof}
Let $a_1 \dots a_{n}$ be distinct representatives of all nontrivial conjugacy
classes.

	We take
	\[ X = \{ x \in \{0,1,2, \dots 2n\}^G | \forall g\in G \forall i, (gx)_{a_i} \not= (gx)_\lambda \}\]

   $X$ is clearly an SFT, and it is clearly nonempty.
   
Now let $x \in X$. Let $g \in G$, $g \not= \lambda$ s.t.  $gx = x$.
$g$ is conjugate to $a_k$ for some $k$. We then repeat the arguments
of the previous proof.  
\end{proof}

\section{On a Construction of Kari}

\subsection{Definitions}
Kari provided a way in \cite{Kari5} to convert a piecewise affine map into
a tileset simulating it.
We give here the relevant definitions.
First, we introduce a formalism for Wang tiles that will be easier to
deal with.
\begin{defn}
Let $G$ be a f.g. group with a set $S$ of generators.

A set of Wang tiles over $G$ is a tuple 
$(C,(\phi_h)_{h\in S}, (\psi_h)_{h\in S})$ where, for each $h$, $\phi_h, \psi_h$ are maps from $C$ to some finite set.

The subshift generated by $C$ is

\[ 
	\begin{array}{rcl}
	X_C &=& \{ x \in C^G | \forall g\in G, \forall h \in S, \phi_h(x_g) =
	  \psi (x_{gh^{-1}})\} \\
	&=&
	\{ x \in C^G | \forall g \in G, \forall h \in S, \phi_h((gx)_e) =
	  \psi ((gx)_{h^{-1}})\}\end{array}\]
\end{defn}
(The last definition proves it is indeed a subshift, and in fact a
subshift of finite type).
If $G$ has one generator (in particular if $G= \mathbb{Z} =
  \left<1\right>$), we will write $\phi$ and $\psi$ instead of
	$\phi_1$ and $\psi_1$.

\begin{defn}
	Let $cont: \{0,1\}^\mathbb{Z} \rightarrow [0,1]$ defined by 
	$cont(x) = \lim\sup_n \frac{\sum_{i \in [-n,n]} x_i}{2n+1}$
	and $disc: [0,1] \rightarrow \{0,1\}^\mathbb{Z}$ defined by 
	$disc(y)_n = \left\lfloor (n+1) x\right\rfloor - \left\lfloor n
	x\right\rfloor$.
	
	Remark that $cont(disc)(y) = y$.
\end{defn}	

\begin{theorem}[\cite{Kari5}]
	\label{thm:kari}
	Let $a,b$ be rational numbers and $f(x) = ax+b$.

	Then there exists a set of Wang tiles $(C,\phi,\psi)$ over $\mathbb{Z}$
	and two maps $out, in$ from $C$ to
	$\{0,1\}$ so that the two following properties hold
	
	\begin{itemize}
		\item For any configuration $x$ of $X_C$, $f(cont(in(x)) = cont (out(x)$)
		  
		\item For any $y \in [0,1]$ so that $f(y) \in [0,1]$, 
		  there exists a configuration $x$ of $C_G$ so that $in(x) =  disc(y)$ and $out(x) = disc(f(y))$
    \end{itemize}		
\end{theorem}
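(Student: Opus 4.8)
The plan is to build $C$ as a finite ``balanced transducer'': each tile reads one input bit and writes one output bit, and its two horizontal labels record a bounded integer carry measuring how far the accumulated output has drifted from the affine image of the accumulated input. Concretely, I would first clear denominators, writing $a = p/q$ and $b = s/q$ with $p,s \in \mathbb{Z}$ and $q \geq 1$, and then set
\[ C = \{ (L,I,O,R) \mid I,O \in \{0,1\},\ L,R \in \Sigma,\ R = L + q\,O - p\,I - s \}, \]
with $\phi(L,I,O,R)=L$, $\psi(L,I,O,R)=R$, $in(L,I,O,R)=I$, $out(L,I,O,R)=O$, and $\Sigma$ a finite set of integers to be pinned down later. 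For $\mathbb{Z}=\langle 1\rangle$ the matching rule $\phi(x_g)=\psi(x_{g-1})$ reads $L_g = R_{g-1}$, so along any $x \in X_C$ the shared labels form a bi-infinite integer sequence $(E_n)$ obeying $E_{n+1} = E_n + q\,out(x_n) - p\,in(x_n) - s$ at every cell. The crux of the construction, and the step I expect to be the real obstacle, is to choose $\Sigma$ finite while keeping $C$ rich enough to realise every Sturmian input; everything else is telescoping.

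For the first property (soundness) I would telescope this recurrence over the window $[-n,n]$, obtaining
\[ E_{n+1}-E_{-n} = q\sum_{i=-n}^{n} out(x_i) - p\sum_{i=-n}^{n} in(x_i) - s(2n+1). \]
Since $\Sigma$ is finite, the left-hand side is $O(1)$, so dividing by $2n+1$ and letting $n\to\infty$ forces $q\,cont(out(x)) = p\,cont(in(x)) + s$, that is $cont(out(x)) = a\,cont(in(x)) + b = f(cont(in(x)))$. (For $a<0$ one must track $\limsup$ versus $\liminf$, but it is the boundedness of the labels that does the work.) Note that this argument is valid for \emph{any} finite $\Sigma$, so soundness costs nothing and imposes no constraint beyond finiteness.

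For the second property (completeness) I would exhibit the tiling explicitly and let it dictate $\Sigma$. Given $y$ with $f(y) \in [0,1]$, put $in(x_n)=disc(y)_n$ and $out(x_n)=disc(f(y))_n$, and define the candidate label $E_n = p\{ny\} - q\{nf(y)\}$, where $\{t\}$ denotes the fractional part. Using $disc(y)_n = \lfloor(n+1)y\rfloor-\lfloor ny\rfloor$ one checks $\{(n+1)y\}-\{ny\} = y - in(x_n)$ and the analogous identity for $f(y)$, and since $q\,f(y)=p\,y+s$ this gives back exactly the recurrence $E_{n+1}=E_n+q\,out(x_n)-p\,in(x_n)-s$. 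The two facts that make $\Sigma$ finite now fall out: because $q\,f(y)-p\,y = s \in \mathbb{Z}$, the quantity $E_n = p\{ny\}-q\{nf(y)\}$ is an \emph{integer}; and because fractional parts lie in $[0,1)$ it is \emph{bounded}, lying in a fixed interval depending only on $p$ and $q$. Taking $\Sigma$ to be the finite set of integers in that interval makes this configuration a legal element of $X_C$ with $in(x)=disc(y)$ and $out(x)=disc(f(y))$, which is completeness. The main difficulty to get right is precisely this integrality-plus-boundedness of $E_n$: it is what converts the naive real-valued carry $\{nf(y)\}-a\{ny\}$, which takes infinitely many values, into a finite-state label after scaling by $q$.
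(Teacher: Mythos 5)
The paper gives no proof of this theorem: it is imported verbatim from Kari \cite{Kari5}, with only the two example tile sets of Figure~\ref{kari:f1} as illustration. Your reconstruction is correct and is precisely Kari's original argument --- the bounded integer carry $E_n$ on the horizontal edges, the telescoping for soundness, and the explicit labels $E_n = p\{ny\}-q\{nf(y)\}$ (integral because $qf(y)-py=s\in\mathbb{Z}$, bounded by $|p|+|q|$) for completeness; indeed, reading off the figure for $f(x)=(2x-1)/3$ recovers your tiles with $R=L+3O-2I+1$. The one point to watch is the one you flag in passing: since $cont$ is defined as a $\limsup$, the telescoped identity $qA_n^{out}=pA_n^{in}+s+O(1/n)$ yields $cont(out(x))=f(cont(in(x)))$ for \emph{arbitrary} $x\in X_C$ only when $a\geq 0$ (for $a<0$ it relates the $\limsup$ of one side to the $\liminf$ of the other); this imprecision is already present in the statement as quoted, and is harmless here since every map the paper feeds into the construction has positive slope.
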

$C$ is usually seen as a set of Wang tiles over $\mathbb{Z}^2$ rather
than $\mathbb{Z}$ but this formalism is better for our purpose.

Two examples are given in Figure \ref{kari:f1}.
\newcommand\wang[4]{
        \draw[line width=0.1pt] (0,0) -- +(2,2) -- +(0,4) -- +(0,0);
        \draw[line width=0.1pt] (0,0) -- +(2,2) -- +(4,0) -- +(0,0);
\begin{scope}[xshift=4cm, yshift=4cm]
        \draw[line width=0.1pt] (0,0) -- +(-2,-2) -- +(0,-4) -- +(0,0); 
        \draw[line width=0.1pt] (0,0) -- +(-2,-2) -- +(-4,0) -- +(0,0);
\end{scope}%
                \draw (1,2) node {#1};
                \draw (3,2) node {#2};
                \draw (2,3.5) node {#3};
                \draw (2,.5) node {#4};
}

\begin{figure}
	$C_1: $
	\begin{center}
\begin{tikzpicture}[scale=0.25]
\begin{scope}[xshift=0cm, yshift=0cm]
\wang{0}{1}{1}{0}
\end{scope}
\begin{scope}[xshift=6cm, yshift=0cm]
\wang{1}{0}{0}{0}
\end{scope}
\begin{scope}[xshift=12cm, yshift=0cm]
\wang{1}{2}{1}{0}
\end{scope}
\begin{scope}[xshift=18cm, yshift=0cm]
\wang{2}{1}{0}{0}
\end{scope}
\begin{scope}[xshift=24cm, yshift=0cm]
\wang{2}{3}{1}{0}
\end{scope}
\begin{scope}[xshift=30cm, yshift=0cm]
\wang{2}{0}{1}{1}
\end{scope}
\begin{scope}[xshift=36cm, yshift=0cm]
\wang{3}{2}{0}{0}
\end{scope}
\begin{scope}[xshift=42cm, yshift=0cm]
\wang{3}{1}{1}{1}
\end{scope}
\end{tikzpicture}
\end{center}
	$C_2: $
	\begin{center}
\begin{tikzpicture}[scale=0.25]
\begin{scope}[xshift=0cm, yshift=0cm]
\wang{0}{1}{0}{0}
\end{scope}
\begin{scope}[xshift=6cm, yshift=0cm]
\wang{0}{3}{1}{0}
\end{scope}
\begin{scope}[xshift=12cm, yshift=0cm]
\wang{0}{2}{1}{1}
\end{scope}
\begin{scope}[xshift=18cm, yshift=0cm]
\wang{1}{2}{0}{0}
\end{scope}
\begin{scope}[xshift=24cm, yshift=0cm]
\wang{1}{4}{1}{1}
\end{scope}
\begin{scope}[xshift=30cm, yshift=0cm]
\wang{2}{4}{0}{0}
\end{scope}
\begin{scope}[xshift=36cm, yshift=0cm]
\wang{2}{0}{0}{1}
\end{scope}
\begin{scope}[xshift=42cm, yshift=0cm]
\wang{2}{5}{1}{1}
\end{scope}
\begin{scope}[xshift=0cm, yshift=6cm]
\wang{3}{4}{0}{1}
\end{scope}
\begin{scope}[xshift=6cm, yshift=6cm]
\wang{4}{5}{0}{0}
\end{scope}
\begin{scope}[xshift=12cm, yshift=6cm]
\wang{4}{1}{0}{1}
\end{scope}
\begin{scope}[xshift=18cm, yshift=6cm]
\wang{4}{3}{1}{1}
\end{scope}
\begin{scope}[xshift=24cm, yshift=6cm]
\wang{5}{3}{0}{0}
\end{scope}
\begin{scope}[xshift=30cm, yshift=6cm]
\wang{5}{2}{0}{1}
\end{scope}
\end{tikzpicture}
\end{center}
\caption{Two set of Wang tiles corresponding respectively to the 
  maps $f(x) = (2x-1)/3$ and $f(x) = (4x+1)/3$.
  The colors on each tile $c\in C$ on east,west,north,south represent
  respectively $\phi(c), \psi(c), in(c), out(c)$.
  }
\label{kari:f1}
\end{figure}

\begin{cor}
	\label{cor:fam}
Let $f_1 \dots f_k$ be a finite family of affine maps with rational
coordinates.

Then there exists a set of Wang tiles $(C,\phi,\psi)$ over $\mathbb{Z}$
and maps $in$ et $(out_i)_{1 \leq i \leq k}$ from $C$ to $\{0,1\}$ so that the two following properties hold

\begin{itemize}
  \item For any configuration $x$ of $X_C$, 
	$f_i(cont(in(x)) = cont (out_i(x)$)
	
  \item For any $y \in [0,1]$ so that $f_i(y) \in [0,1]$ for all $i$, 
	there exists a configuration $x$ of $C_G$ so that $in(x) =  disc(y)$ and $out_i(x) = disc(f_i(y))$
\end{itemize}		
\end{cor}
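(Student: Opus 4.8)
The plan is to obtain $C$ as a \emph{superposition} (fibre product) of the $k$ tile sets produced by Theorem~\ref{thm:kari}, synchronised along their common input. First I would apply Theorem~\ref{thm:kari} to each $f_i$ separately, yielding a set of Wang tiles $(C_i,\phi_i,\psi_i)$ over $\mathbb{Z}$ together with maps $in_i, out_i : C_i \to \{0,1\}$ satisfying the two conclusions of the theorem. I would then set
\[ C = \left\{ (c_1,\dots,c_k) \in C_1\times\cdots\times C_k \;\middle|\; in_1(c_1) = \cdots = in_k(c_k) \right\}, \]
with the horizontal colours defined componentwise, $\phi(c) = (\phi_1(c_1),\dots,\phi_k(c_k))$ and $\psi(c) = (\psi_1(c_1),\dots,\psi_k(c_k))$. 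On this tile set I would define $in(c)$ to be the common value $in_i(c_i)$ (well defined by the constraint in the definition of $C$) and $out_i(c) = out_i(c_i)$.

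The restriction in $C$ to tuples with a single shared input bit is exactly the step that forces every layer to read the same real number, and it is the only genuinely new ingredient; once it is in place, both properties follow layer by layer. For the first property, given $x \in X_C$ I would note that the componentwise definition of $\phi,\psi$ makes each projection $x^{(i)}$ a configuration of $X_{C_i}$, while the synchronisation constraint gives $in(x) = in_i(x^{(i)})$ and, by construction, $out_i(x) = out_i(x^{(i)})$; applying the first conclusion of Theorem~\ref{thm:kari} to $f_i$ then yields $f_i(cont(in(x))) = cont(out_i(x))$ for every $i$.

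For the second property, given $y\in[0,1]$ with $f_i(y)\in[0,1]$ for all $i$, I would use Theorem~\ref{thm:kari} to pick, for each $i$, a configuration $x^{(i)}\in X_{C_i}$ with $in_i(x^{(i)}) = disc(y)$ and $out_i(x^{(i)}) = disc(f_i(y))$, and then glue them into the configuration $x$ defined by $x_g = (x^{(1)}_g,\dots,x^{(k)}_g)$. The key observation is that this $x$ actually lands in $C^{\mathbb{Z}}$: at each position $g$ the input bits $in_i(x^{(i)}_g)$ all equal $(disc(y))_g$, so the tuple meets the synchronisation constraint and hence $x_g\in C$; the matching conditions already hold in each layer, so $x\in X_C$, with $in(x)=disc(y)$ and $out_i(x)=disc(f_i(y))$. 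I expect the main obstacle to be purely bookkeeping, namely checking that the gluing respects $C$, which reduces precisely to the remark that all layers are fed the same input $disc(y)$; no new dynamical content beyond Theorem~\ref{thm:kari} is required.
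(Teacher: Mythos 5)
Your construction is exactly the one in the paper: the fibre product $C\subseteq\prod C_i$ restricted to tuples sharing a common input bit, with componentwise horizontal colours, $in$ the common value and $out_i$ the $i$-th layer's output. The paper simply declares the two properties ``clear'' where you verify them layer by layer, so your proposal is correct and takes essentially the same approach.
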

\begin{proof}
let $(C^i, \phi^i, \psi^i)$ be the  set of Wang tiles over
$\mathbb{Z}$ corresponding to $f_i$, with maps $out^i$ and $in^i$.

Let $C = \{ y \in \prod C^i |  \exists x\in\{0,1\}, \forall i, in^i(y^i) =  x\}$.
Let $p^i$ denote the projection from $C$ to $C^i$ and define
\[ \phi = \prod (\phi^i \circ p^i) \]	
\[ \psi = \prod (\psi^i \circ p^i) \]	
\[ in = in^1 \circ p^1 = in^2 \circ p^2 = \dots= in^k \circ p^k\]
\[ out_{i} = out^i \circ p^i\]

It is clear that $C$ satisfies the desired properties.   
\end{proof}	

\begin{cor}
	\label{cor:aff}
Theorem	\ref{thm:kari} still holds when $f$ is a piecewise affine
rational homeomorphism from $[0,1]_{/0 \sim 1}$.
As a consequence, the previous corollary also holds for a finite
familiy of piecewise affine rational homeomorphisms.
\end{cor}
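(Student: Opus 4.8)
The plan is to reduce the piecewise case to the finite-family case of Corollary~\ref{cor:fam} by (i) encoding in each tile which affine branch of $f$ is active, (ii) forcing this branch to be globally constant, and (iii) constraining the input density to lie in the domain of that branch. First I would fix the data of $f$: since $f$ is a piecewise affine rational homeomorphism of $[0,1]_{/0\sim 1}$, there are rationals $0=c_0<c_1<\dots<c_m=1$ so that $f$ is affine on each $[c_{j-1},c_j]$. Refining this partition if necessary (adding the preimages of the seam $0\sim 1$), I may assume each piece maps its interval into $[0,1]$ without wrapping around, so that $f|_{[c_{j-1},c_j]}$ is a genuine affine map $f_j(x)=a_jx+b_j$ with $a_j,b_j$ rational and $f_j([c_{j-1},c_j])\subseteq[0,1]$.

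The \emph{key trick} is to realise the interval constraint $cont(in)\in[c_{j-1},c_j]$ with finitely many tiles, exploiting that the density of any binary configuration automatically lies in $[0,1]$. For each branch $j$ I would apply Corollary~\ref{cor:fam} to the three rational affine maps $f_j$, $\rho\mapsto\rho-c_{j-1}$ and $\rho\mapsto c_j-\rho$, obtaining a tile set $D_j$ carrying a common input map $in$, an output $out$ tracking $f_j$, and two auxiliary outputs $u,v$. By property~A of the family construction, every configuration of $X_{D_j}$ satisfies $cont(u)=cont(in)-c_{j-1}$ and $cont(v)=c_j-cont(in)$; since $u,v$ are binary, both densities lie in $[0,1]$, forcing $cont(in)\in[c_{j-1},c_j]$ and hence $cont(out)=f_j(cont(in))=f(cont(in))$. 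Conversely, for any $y\in[c_{j-1},c_j]$ all three maps send $y$ into $[0,1]$, so property~B yields a configuration with $in=disc(y)$ and $out=disc(f_j(y))=disc(f(y))$.

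Then I would glue the $D_j$ into a single tile set $C=\bigsqcup_j D_j$, appending the branch index $j$ to both colour maps $\phi$ and $\psi$ so that adjacent tiles must share their branch. Since the Cayley graph of $\mathbb{Z}=\left<1\right>$ is connected, the matching condition forces the branch to be constant along any configuration, whence $X_C=\bigcup_j X_{D_j}$ after identifying labels. Property~A of Theorem~\ref{thm:kari} follows because every configuration lies in some $X_{D_j}$, where $cont(out)=f(cont(in))$; property~B follows by, given $y$, choosing $j$ with $y\in[c_{j-1},c_j]$ and taking the configuration of $X_{D_j}$ produced above. For the final sentence I would run the same argument on the common refinement of the partitions of $g_1,\dots,g_k$: on each cell $J_\ell$ every $g_i$ is a single affine map $g_{i,\ell}$, so applying Corollary~\ref{cor:fam} to $\{g_{i,\ell}\}_i$ together with the two indicator maps of $J_\ell$, and gluing over $\ell$ exactly as above, yields tiles with one $in$ and outputs $out_i$ having the required properties.

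The step I expect to be the main obstacle is the interval constraint itself: making sure that pinning $cont(in)$ to $[c_{j-1},c_j]$ through auxiliary affine maps is both necessary (property~A, which is automatic from the range of $cont$) and non-obstructive (property~B, which needs the auxiliary maps to keep $y$ inside $[0,1]$ \emph{simultaneously} with $f_j$). Getting the refinement right so that every piece maps into $[0,1]$ without crossing the seam, and checking that the branch labelling genuinely isolates the subshifts $X_{D_j}$ rather than permitting hybrid configurations, are the points where care is required; the remaining verification is routine bookkeeping of the maps $in$, $out$, $\phi$, $\psi$.
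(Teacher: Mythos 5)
Your construction is correct. It relies on the same two mechanisms as the paper's proof: a disjoint union of tile sets whose horizontal colours $\phi,\psi$ are made disjoint, so that connectivity of $\mathbb{Z}$ forces the active branch to be constant along a configuration; and the observation that any output track of the Kari construction automatically has density in $[0,1]$, which can be exploited to pin $cont(in)$ into a prescribed rational subinterval. The difference is in how the domain restriction and the identification $0\sim 1$ are implemented. The paper first proves that the class of relations realizable by Theorem~\ref{thm:kari} is closed under union \emph{and composition}, and then writes each branch as a composition of five affine maps: the outer two are $id\cup(x\mapsto x-1)\cup(x\mapsto x+1)$, realizing the seam identification, and the middle ones are identity-on-a-subinterval maps such as $(x\mapsto p_{i+1}-x)\circ(x\mapsto p_{i+1}-x)$, realizing the restriction to $[p_i,p_{i+1}]$. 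You stay entirely inside Corollary~\ref{cor:fam}, realizing the restriction by the two parallel auxiliary tracks $\rho\mapsto\rho-c_{j-1}$ and $\rho\mapsto c_j-\rho$, and you absorb the seam into the affine constants by refining the partition at the preimages of $0\sim 1$. Your route avoids proving closure under composition, at the cost of extra tracks and of handling the seam by hand; the paper's closure lemma is more work up front but is stated so as to be reused for the higher-dimensional generalization ($f\circ g$, $f;g$) later in the section. Both arguments establish properties A and B of the theorem for the glued tile set.
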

Let's define precisely what we mean by a piecewise affine rational
homeomorphism from $[0,1]_{/0 \sim 1}$ to $[0,1]_{/0 \sim 1}$.

We first define a relation $\equiv$: $x \equiv y$ if $x =y$ or
$\{x,y\} = \{0,1\}$. Then a piecewise affine homeomorphism
from $[0,1]_{/0 \sim 1}$ to $[0,1]_{/0 \sim 1}$ is given by a finite family $[p_i,p_{i+1}]_{i < n}$
of $n$ intervals with rational coordinates, with $p_0 = 0$ and $p_n = 1$,
and a family of affine maps $f_i(x) = a_i x + b$ so that
\begin{itemize}	
  \item $f_i$ and $f_{i+1}$ agrees on their common boundary:
	\[ \forall i \in \mathbb{Z}/n\mathbb{Z},
	f_i(p_{i+1}) \sim f_{(i+1)} (p_{i+1})\]
  \item $f = \cup_i f_i$ is injective:
	$f(x) \sim  f(y) \implies x \sim y$
\end{itemize}	
These properties imply that $f$ is invertible, and its inverse is
still piecewise affine
\begin{proof}
	We give a proof of this easy result to prepare for another
	proof later on.
	
	Let $\cal F$ be the class of relations on $[0,1] \times [0,1]$ for which the
	theorem is true.
	$\cal F$ contains all rational affine maps.
	
	It is clear from the formalism that if  $f$ and
	$g$ are in $\cal F$, then  $f\cup g \in \cal F$, by
	taking $(C\cup C', \phi\cup\phi', \psi\cup\psi)$ once the range of $\phi$ and $\phi'$ have been made disjoint.
	In the same way, we may prove  $f \circ g \in \cal F$ and $f;g \in
	\cal F$, where $f\circ g(x) = f(g(x))$ and $f;g(x) = g(f(x))$.

Let $i \in \{0\dots n-1\}$.
The function $f_i$ is the composition of $5$ functions in $\cal F$:
\begin{itemize}
	\item $g_1 = id \cup (x \rightarrow x - 1) \cup (x\rightarrow x+1) $	   ($g_1$ is exactly the relation $\equiv$)
	\item $g_2 = (x \rightarrow p_{i+1} - x) \circ (x \rightarrow
	  p_{i+1}	  - x)$ 	  ($g_2(x) = x$, defined on $[0,p_{i+1}]$)
	\item $g_3 = (x \rightarrow x + p_i) \circ (x \rightarrow x - 	  p_i)$ 	  ($g_2(x) = x$, defined on $[p_i, 1]$)
	\item $g_4 = x \rightarrow a_i x + b_i$
	\item $g_5 = g_1$
\end{itemize}	
Hence $f_i \in {\cal F}$, and $f \in {\cal F}$.

\end{proof}

Kari first used this construction~\cite{Kari14} to obtain a aperiodic SFT of
$\mathbb{Z}^2$: Start with a piecewise rational homeomorphism $f$
with no periodic points ($f$ should be indeed over $[0,1]_{/0\sim 1}$
and not over $[0,1]$ for this to work, as any continuous map from
$[0,1]$ to $[0,1]$ has a fixed point by the intermediate value theorem).  For example, take
\[
	f(x) = \left\{\begin{array}{ccl}
	  (2x-1)/3 & \text{if }& 1/2 \leq x \leq 1\\
	  (4x+1)/3 & \text{if }& 0 \leq x \leq 1/2\\
\end{array}\right.\]

Take the set of Wang tiles over $\mathbb{Z}$ given by
the theorem, and consider it as a set of Wang tiles over $\mathbb{Z}
\times \mathbb{Z}$ by mapping $in$ and $out$ to $\phi_{(0,1)}$ and
$\psi_{(0,1)}$.
For the specific function $f$ above, we obtain the set of 22 tiles
presented in Fig.~\ref{kari:f1} (where horizontal colors corresponding to
different sets are supposed to be distinct).
Then it is easy to see that this indeed gives a SFT over
$\mathbb{Z}^2$ with no periodic points.
The same construction can be refined \cite{Kari14} to obtain aperiodic
tilesets with fewer tiles, but that is not our purpose here.

The important point is that this construction may be easily generalized: There is no need
to tile $\mathbb{Z} \times \mathbb{Z}$, we may use the exact same idea
to obtain a SFT over $\mathbb{Z} \times G$ for some groups $G$.
\clearpage
\begin{defn}
	A f.g. group $G$ is PA-recognizable iff there exists a finite set
	$\cal F$ of piecewise affine rational homeomorphisms of
	$[0,1]_{/0\sim 1}$ so that
	\begin{itemize}
	  \item(A) The group generated by the homeomorphisms is isomorphic to $G$
	  \item(B) For any $t \in [0,1]_{/0\sim 1}$, if $gf(t) = f(t)$ for all $f$, then $g = e$
	\end{itemize}		
\end{defn}
Note that by the property $(A)$ every PA-recognizable group has
decidable word problem.

\begin{theorem}
	If $G$ is PA-recognizable and infinite, there exists a SFT over $\mathbb{Z}
	\times G$ which is strongly aperiodic.
\end{theorem}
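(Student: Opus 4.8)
The plan is to adapt Kari's $\mathbb{Z}^2$ construction to $\mathbb{Z} \times G$ by replacing the single vertical map $f$ with the whole finite family $\cal F$ of piecewise affine homeomorphisms witnessing PA-recognizability. The group $G$ is generated by $\cal F$, so each generator acts as a rational piecewise affine homeomorphism of $[0,1]_{/0\sim 1}$, and I want the $G$-direction of the subshift to simulate the orbit of a point under these maps while the $\mathbb{Z}$-direction carries the Kari discretization.

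\medskip

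First I would set up the tileset. Let $S=\{h_1,\dots,h_k\}$ be a generating set of $G$ given by the homeomorphisms $f_1,\dots,f_k \in \cal F$. Applying Corollary~\ref{cor:aff} (the piecewise-affine version) together with Corollary~\ref{cor:fam} to the finite family $\{f_1,\dots,f_k\}$, I obtain a single set of Wang tiles $(C,\phi,\psi)$ over $\mathbb{Z}$ with one input map $in$ and output maps $out_1,\dots,out_k$, such that along any $\mathbb{Z}$-fiber the map $in$ encodes $disc(y)$ and each $out_i$ encodes $disc(f_i(y))$, consistently via the $cont/disc$ relation. I would then build a Wang tileset over $\mathbb{Z}\times G$ whose $\mathbb{Z}$-constraints are exactly those of $(C,\phi,\psi)$, and whose constraint across the generator $h_i$ of $G$ identifies the $out_i$ color of the fiber at $g$ with the $in$ color of the fiber at $gh_i$. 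Concretely, set $\phi_{h_i} = out_i$ and $\psi_{h_i} = in$ on the alphabet $C$. The resulting $X_C$ is an SFT by the definition in the Kari section.

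\medskip

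Next I would show nonemptiness and aperiodicity. For nonemptiness, pick any $t\in[0,1]_{/0\sim 1}$; the configuration assigning to the fiber over $g\in G$ the discretization $disc(g\cdot t)$ (where $g\cdot t$ is the image of $t$ under the homeomorphism corresponding to $g$) satisfies all constraints, since the matching rule across $h_i$ is exactly $f_i(g\cdot t)=(gh_i)\cdot t$, which holds in the homeomorphism group because generators act by the $f_i$. For aperiodicity, the key invariant is that for any configuration $x\in X_C$, the $cont$ of the fiber over $g$ recovers a well-defined real $t_g\in[0,1]_{/0\sim1}$, and the matching constraints force $t_{gh_i}=f_i(t_g)$, hence $t_g = g\cdot t_\lambda$ for the action of $G$. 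Now suppose $(n,\gamma)\in\mathbb{Z}\times G$ stabilizes $x$. The $G$-component forces $\gamma\cdot t_g = t_g$ for all $g$, i.e. $\gamma$ fixes the entire $G$-orbit of $t_\lambda$, so by property (B) of PA-recognizability $\gamma=e$. With $\gamma=e$, the surviving symmetry is a vertical $\mathbb{Z}$-translation within a single Kari fiber, which is ruled out by the standard Kari argument: a nonzero shift would force $cont$ to be irrational or produce a periodic discretization incompatible with the aperiodicity of $disc$, exactly as in the original $\mathbb{Z}^2$ construction. Hence $Stab(x)=\{\lambda\}$ and $X_C$ is strongly aperiodic.

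\medskip

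\textbf{The main obstacle} I expect is making the vertical ($\mathbb{Z}$-direction) aperiodicity argument fully rigorous in the group setting, and in particular verifying that the $cont$ values are well defined and transform correctly under the $G$-action when $G$ is nonabelian and the fibers are indexed by group elements rather than by $\mathbb{Z}$. I would need to check carefully that the recovery $t_g=cont(in(x)|_{\text{fiber }g})$ is compatible across generators in both directions (using that $f_i$ is a homeomorphism, so invertible, to handle $h_i^{-1}$), and that the infinitude of $G$ is genuinely used — it guarantees the orbit of $t_\lambda$ is infinite so that no vertical period can be forced by a degenerate finite orbit. The $cont/disc$ bookkeeping and the exclusion of a purely vertical period are routine given Theorem~\ref{thm:kari}, but they are where the real care lies.
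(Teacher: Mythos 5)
Your construction is exactly the paper's: the same tileset over $\mathbb{Z}\times G$ with $\phi_{h}=out_h$, $\psi_{h}=in$, the same recovery of reals $t_g$ with $t_{gh^{-1}}=f_h(t_g)$ (watch the direction of the convolution-style shift action, but this is cosmetic), the same use of property (B) to kill the $G$-component of a stabilizer, and the same nonemptiness argument. The one step where your stated justification would fail is the exclusion of a nonzero $\mathbb{Z}$-period: there is no ``standard Kari argument'' to invoke here, because the maps $f_i$ are \emph{not} assumed fixed-point-free (for Thompson's $T$ the generators visibly have fixed points), and a periodic fiber gives a \emph{rational} $cont$-value, not an irrational one, so nothing is contradicted fiberwise. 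The correct argument, which the paper gives and which you essentially rediscover in your final paragraph, is global: if $n\neq 0$ then every fiber $x_g$ is $n$-periodic, so the values $t_g$ range over a finite set $Z$ closed under all $f_h$; each element of $G$ then acts as a permutation of $Z$, and property (B) makes this finite permutation representation faithful, forcing $G$ to be finite --- contradicting the hypothesis. So the infinitude of $G$ enters exactly there, and your closing remark about the orbit of $t_\lambda$ being infinite is the right repair; just make it the argument rather than an afterthought.
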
	
\begin{proof}
Let $S$ be a set of generators for $G$.
Let $(f_h)_{h \in S}$ be generators for $G$ as a group of piecewise
affine maps. 
And consider the set of Wang tiles $(C, \phi, \psi)$ and maps $in, out_h$,
corresponding to them by Corollaries \ref{cor:fam} and \ref{cor:aff}.

Now we look at the set of Wang tiles $(C', (\phi_i), (\psi_i))$ over
$\mathbb{Z} \times G$ (with generators $1$ and $S$) defined by $C' = C$ and:
\[ \phi_1 = \phi \]	
\[ \psi_1 = \psi \]	
\[ \psi_{h} = in\]
\[ \phi_{h} = out_h\]

We will prove that (a) $X_{C'}$ mimics the behaviour of the piecewise
affine maps and (b) gives a strongly aperiodic SFT.

For an element $x \in X_{C'}$, $g \in G$, let $x_g : \mathbb{Z} \rightarrow C$ where $x_g(n) = x_{(n,g)}$.
Now let $z_g = cont(in(x_{g}))$.
%
%
Note that $x_g \in X_C$.

Note that by definition, for any $h$, 
\[\begin{array}{rcl}
	f_h(z_{g}) &=& f_h(cont(in(x_{g}))) \\
	&=& cont(out_h(x_{g})) \\
	&=& cont(\phi_h(x_g)) \\
	&=& cont(\psi_h(x_{gh^{-1}}))\\
	&=& cont(in(x_{gh^{-1}}))\\	
	&=& z_{gh^{-1}}
\end{array}
\]

This implies that for any $g= g_1\dots g_k$, we have
$z_{(g_1\dots g_k)^{-1}} = f_{g_1}(f_{g_2} \dots f_{g_k}(z_\lambda)\dots)$.
That is, for all $g \in G$, $z_{g^{-1}} = f_g(z_\lambda)$.

Now, by the second part of theorem \ref{thm:kari}, for any collection
$(z_g)_{g \in G}$ that satisfy
$z_{g^{-1}} = f_g(z_\lambda)$, there exists a corresponding configuration in $X_{C'}$.
This proves that the SFT is nonempty, by starting e.g. with 
$z_{g^{-1}} = f_g(0)$.

Now let $x \in X_{C'}$ and  $(n,h) \in \mathbb{Z} \times G$ be in the stabilizer of $x$, that
is for all $(m,g) \in \mathbb{Z} \times G$, we have
$((n,h)x)_{(m,g)} = x_{(m,g)}$.
This implies that $z_{gh^{-1}} = z_g$ for all $g$.
Hence, $f_h f_g(z_\lambda) = f_g(z_\lambda)$ for all $g$.
By PA-recognizability, this implies $h = \lambda$.

It remains to prove that $n = 0$.
If $n \not= 0$, this means that for each $g$, the word $x_g$ is
periodic of period $n$.
There are finitely many periodic words of length $n$, which means that
$z_g$ will take only finitely many values: $z_g \in Z$ for some finite
set $Z$, which is closed under all maps $f_h$.

Then each element of $G$ acts as a permutation on $Z$.
Furthermore, by PA-recognizability, any element of $G$ that acts like
the identity on $Z$ must be equal to the identity.
This implies that any element of $G$ is identified by the permutation
of $Z$ it induces, hence that $G$ is finite.
\end{proof}
\subsection{Applications}

\begin{proposition}
	$\mathbb{Z}$ is PA-recognizable. 
	Hence $\mathbb{Z} 	\times\mathbb{Z}$ admits a strongly aperiodic
	subshift of finite type
\end{proposition}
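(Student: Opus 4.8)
The plan is to realize $\mathbb{Z}$ using a single generator, i.e. to take $\mathcal{F} = \{f\}$ where $f$ is exactly the circle map already appearing in the Kari construction above, namely
\[
f(x) = \begin{cases}(2x-1)/3 & \text{if } 1/2 \leq x \leq 1,\\ (4x+1)/3 & \text{if } 0 \leq x \leq 1/2.\end{cases}
\]
This is a piecewise affine rational homeomorphism of $[0,1]_{/0\sim 1}$, and the discussion preceding the definition of PA-recognizability has already recorded the one nontrivial fact I will rely on: $f$ has \emph{no periodic points}, that is $f^k(t)\neq t$ for every $t$ and every $k\neq 0$. Everything else reduces to this property.

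First I would verify condition (A). If $f$ had finite order $n\geq 1$, then $f^n=\mathrm{id}$ and every point of $[0,1]_{/0\sim 1}$ would be fixed by $f^n$, hence periodic, contradicting the absence of periodic points. Therefore $f$ has infinite order and the group it generates, $\langle f\rangle$, is isomorphic to $\mathbb{Z}$. Next I would verify condition (B). Let $t\in[0,1]_{/0\sim 1}$ and let $g\in\langle f\rangle$, say $g=f^k$, satisfy $g\,u(t)=u(t)$ for all $u\in\langle f\rangle$. Specialising to $u=\mathrm{id}$ gives $g(t)=t$, that is $f^k(t)=t$; since $f$ has no periodic points this forces $k=0$, so $g=\lambda$. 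Hence $\mathbb{Z}$ is PA-recognizable.

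Finally, as $\mathbb{Z}$ is infinite, I would apply the preceding theorem with $G=\mathbb{Z}$ to obtain a strongly aperiodic SFT over $\mathbb{Z}\times\mathbb{Z}$. The only genuine content of the argument is the no-periodic-points property of $f$, which is also the expected main obstacle in principle: a rational rotation has finite order, so one cannot use a rotation and must instead exhibit an honestly non-rotation circle homeomorphism (one of irrational rotation number) with rational breakpoints. Here, however, this obstacle evaporates, since the required map and its freeness from periodic points are precisely the facts already established and used by Kari, so they may simply be quoted rather than re-proved.
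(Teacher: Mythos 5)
Your proposal is correct and follows the same route as the paper: the same map $f$, the group $\langle f\rangle\cong\ZZ$ via infinite order, and property (B) reduced to the absence of periodic points. The one caveat is where you locate the burden of proof. You treat the no-periodic-points property of $f$ as ``already recorded'' in the discussion preceding the definition of PA-recognizability, but in the paper that earlier passage merely \emph{asserts} it (as Kari's example); the actual verification is the content of this proposition's proof. The paper supplies it by conjugating: with $h(x)=x+1$ one gets $f'=hfh^{-1}$ acting on $[1,2]$ by $x\mapsto 2x/3$ or $x\mapsto 4x/3$, so $n>0$ iterations multiply $x$ by $2^{a}4^{n-a}/3^{n}$, which is never $1$; hence $f^{n}(t)=t$ forces $n=0$, and the orbit of any point is infinite. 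Citing Kari for this fact is legitimate, but a self-contained write-up should include this one-line argument (or an equivalent $2$-adic/$3$-adic valuation count), since it is the only genuinely nontrivial step. Everything else in your argument --- deducing (A) from infinite order and (B) by specialising to $u=\mathrm{id}$ --- is fine and matches the paper.
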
	
\begin{proof}
The function $f$ seen previously provides a proof.
\[
	f(x) = \left\{\begin{array}{ccl}
	  (2x-1)/3 & \text{if }& 1/2 \leq x \leq 1\\
	  (4x+1)/3 & \text{if }& 0 \leq x \leq 1/2\\
\end{array}\right.\]

To understand better what $f$ does, we will look at $f' = hfh^{-1}$ where $h(x) = x + 1$.
Then it is easy to see that 
\[
	f'(x) = \left\{\begin{array}{ccl}
	  2x/3 & \text{if }& 3/2 \leq x \leq 2\\
	  4x/3 & \text{if }& 1 \leq x \leq 3/2\\
\end{array}\right.\]
from which it is easy to see that the orbit of $f$ is infinite, (hence
the group generated by $f$ is isomorphic to $\mathbb{Z}$), and that if
$f^n(t) =t$ for some $n$ and some $t$, then $n = 0$ (hence property (B)).
Therefore $G$ is $PA$-recognizable.
\end{proof}	
\begin{proposition}
Thompson group $T$ is PA-recognizable.
	Hence $\mathbb{Z} 	\times T$ admits a strongly aperiodic 	subshift of finite type
\end{proposition}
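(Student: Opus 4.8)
The plan is to exhibit Thompson's group $T$ directly as a group of piecewise affine rational homeomorphisms of $[0,1]_{/0\sim 1}$ and then verify the two requirements of PA-recognizability. Recall that $T$ is defined as the group of orientation-preserving piecewise linear homeomorphisms of the circle $[0,1]_{/0 \sim 1}$ whose breakpoints are dyadic rationals and whose slope on each affine piece is a power of $2$. Every such map is in particular a piecewise affine homeomorphism with rational (indeed dyadic) coefficients, so elements of $T$ are exactly of the type allowed by the definition. Since $T$ is finitely generated (it is even finitely presented), I would take $\mathcal{F}$ to be a finite generating set of $T$, viewed concretely as such homeomorphisms. The group they generate is then $T$ itself, which settles property (A), the required isomorphism being the tautological identification of $T$ with its defining representation.

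For property (B), I would reduce it to the minimality of the standard action of $T$ on the circle. As used in the preceding theorem, (B) asks that for every point $t$, the only element $g \in T$ fixing the entire orbit $\{f(t) : f \in T\}$ pointwise is the identity. The key fact is that $T$ acts minimally on $[0,1]_{/0\sim 1}$, i.e.\ every orbit $T \cdot t$ is dense. Granting this, if $g$ fixes each point of $T\cdot t$, then $g$ agrees with the identity on a dense subset of the circle; as $g$ is a homeomorphism, hence continuous, it must be the identity map, so $g = \lambda$. This establishes (B) for every $t$ simultaneously, uniformly in whether $t$ is dyadic or not.

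The main obstacle is therefore establishing minimality, which I would handle either by citing the well-known fact that $T$ acts minimally on the circle, or by a short direct argument: $T$ acts transitively on dyadic points, so dyadic orbits are already dense, and for a non-dyadic $t$ one chooses a small dyadic interval $J$ containing $t$ together with an element of $T$ mapping $J$ affinely onto a large dyadic interval, thereby pushing $t$ into any prescribed target interval. Once (A) and (B) are in place, and noting that $T$ is infinite, I would simply invoke the preceding theorem (a SFT over $\mathbb{Z}\times G$ for $G$ PA-recognizable and infinite) to conclude that $\mathbb{Z} \times T$ admits a strongly aperiodic subshift of finite type.
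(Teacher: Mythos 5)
Your proposal is correct and follows essentially the same route as the paper: realize $T$ tautologically as a finitely generated group of dyadic piecewise affine homeomorphisms of $[0,1]_{/0\sim 1}$ (giving property (A)), and deduce property (B) from the density of every orbit, since a homeomorphism fixing a dense set pointwise is the identity. The paper merely adds the explicit generators $a$, $b$, $c$ from Cannon--Floyd--Parry where you cite finite generation abstractly, and it leaves the density-implies-(B) step implicit where you spell it out.
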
	
\begin{proof}
$T$ is the quintessential PA-recognizable group: It is formally the 
subgroup of all piecewise affine maps of $[0,1]_{/0\sim 1}$ where each
affine map has dyadic coordinates and positive slope.

$T$ is indeed finitely generated, more precisely it is generated by
the three following functions \cite{CannonFloydParry}:

\[
	a(x) = \left\{\begin{array}{cl}
	  x/2 &  0 \leq x \leq 1/2\\
	  x-1/4 &  1/2 \leq x \leq 3/4\\
	  2x-1 &  3/4 \leq x \leq 1\\
\end{array}\right.
	b(x) = \left\{\begin{array}{cl}
	  x &  0 \leq x \leq 1/2\\
	  x/2 + 1/4 &  1/2 \leq x \leq 3/4\\
	  x-1/8 &  3/4 \leq x \leq 7/8\\
	  2x - 1 & 7/8 \leq x \leq 1\\
\end{array}\right.\]\[
c(x) = \left\{\begin{array}{cl}
	  x/2+3/4 &  0 \leq x \leq 1/2\\
	  2x-1 &  1/2 \leq x \leq 3/4\\
	  x - 1/4 & 3/4 \leq x \leq 1\\
\end{array}\right.
\]

From the definition of $T$, it is easy to see that the orbit of
any $z \in [0,1]$ is dense, hence property (B) is true, and $T$ is
PA-recognizable.

\end{proof}	

Note that it is not clear if Thompson group $F$ is PA-recognizable:
$F$ is the subgroup of $T$ generated by $a$ and $b$, and fixes $0$: 
As a consequence, this particular representation does not satisfy
property (B). Whether another representation of
Thompson group $F$ exists with this property is open.

\begin{proposition}
	$PSL_2(\mathbb{Z})$ is PA-recognizable.
	Hence $\mathbb{Z} \times PSL_2(\mathbb{Z})$ admits a
	strongly aperiodic subshift of finite type.
\end{proposition}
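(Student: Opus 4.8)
The plan is to exhibit $PSL_2(\mathbb{Z})$ concretely as a group of piecewise affine rational homeomorphisms of $[0,1]_{/0\sim 1}$ and verify properties (A) and (B). The natural route is to start from the well-known presentation $PSL_2(\mathbb{Z}) \cong \mathbb{Z}/2 * \mathbb{Z}/3$, generated by an involution $s$ and an element $r$ of order $3$. First I would produce explicit piecewise affine rational homeomorphisms $f_s, f_r$ of the circle $[0,1]_{/0\sim 1}$ with $f_s^2 = \mathrm{id}$, $f_r^3 = \mathrm{id}$, and no other relations, so that the group they generate is isomorphic to $\mathbb{Z}/2 * \mathbb{Z}/3$. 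A clean way to obtain such maps is to transport the standard action of $PSL_2(\mathbb{Z})$ on the boundary circle $\partial\mathbb{H} = \mathbb{R}\cup\{\infty\} = \mathbb{R}P^1$ (by Möbius transformations $z\mapsto (az+b)/(cz+d)$) through a homeomorphism $\mathbb{R}P^1 \cong [0,1]_{/0\sim 1}$, and then approximate or replace it by a genuinely piecewise affine rational conjugate; alternatively one builds $f_s,f_r$ directly so their combinatorics match the free product.

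The key steps, in order, are: (1) write down the generators $f_s, f_r$ explicitly as piecewise affine rational maps of $[0,1]_{/0\sim 1}$; (2) check property (A), namely that $\langle f_s, f_r\rangle \cong PSL_2(\mathbb{Z})$ — for this I would verify the defining relations $f_s^2 = f_r^3 = \mathrm{id}$ hold and that no further relation holds, the latter being most easily argued via a ping-pong (table-tennis) lemma on the circle, partitioning $[0,1]_{/0\sim 1}$ into arcs on which $f_s$ and the powers of $f_r$ act in the contracting/expanding pattern required by the free product $\mathbb{Z}/2 * \mathbb{Z}/3$; (3) check property (B), that the action on $[0,1]_{/0\sim 1}$ is such that no nontrivial element fixes an entire orbit pointwise — concretely, that for some (equivalently every relevant) $t$, the only $g$ with $gf(t)=f(t)$ for all generators $f$ is the identity. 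Since the action of $PSL_2(\mathbb{Z})$ on the circle is faithful and minimal (every orbit is dense, as on $\mathbb{R}P^1$ the rational points form a single dense orbit and the action has no global fixed structure), property (B) follows exactly as in the Thompson group $T$ case: density of a single orbit forces any element acting trivially along it to be the identity. Once (A) and (B) are established, $PSL_2(\mathbb{Z})$ is PA-recognizable by definition, and since it is infinite, the main theorem of this section immediately yields a strongly aperiodic SFT over $\mathbb{Z}\times PSL_2(\mathbb{Z})$.

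The main obstacle I expect is step (2), specifically producing genuinely \emph{piecewise affine rational} homeomorphisms realizing the free product $\mathbb{Z}/2 * \mathbb{Z}/3$ rather than the smooth Möbius action, and then rigorously certifying via ping-pong that the group generated is exactly $PSL_2(\mathbb{Z})$ with no extra collapse. The Möbius transformations are not affine, so a direct transport needs care: one must choose the conjugating homeomorphism and the breakpoints so that the resulting maps are piecewise affine with rational pieces while preserving the order-$2$ and order-$3$ relations exactly and the combinatorial ping-pong dynamics faithfully. A convenient simplification is to exploit the fact (used implicitly by the $T$ construction) that $PSL_2(\mathbb{Z})$ embeds in Thompson's group $T$, or at least that its free-product structure can be modeled by explicit dyadic or rational piecewise affine circle maps; once such explicit generators are in hand, verifying $f_s^2 = f_r^3 = \mathrm{id}$ is a routine finite computation on the pieces, and the ping-pong partition makes the no-extra-relations claim and property (B) essentially immediate.
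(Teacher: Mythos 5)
Your overall architecture is the right one and is close in spirit to the paper's: realize $PSL_2(\mathbb{Z})$ as a group of piecewise affine rational homeomorphisms of the circle conjugate to its projective action on $\mathbb{R}\cup\{\infty\}$, verify (A) and (B), and invoke the main theorem. Your property (B) argument is sound: each generator is a homeomorphism, so an element fixing a dense orbit pointwise is the identity (the paper argues (B) slightly differently, via the Minkowski question-mark conjugacy to the action on the projective line together with the fact that a nonidentity element of $PSL_2(\mathbb{Z})$ fixes at most two points there; that version needs only that orbits have at least three points, not minimality, but both work here).

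The genuine gap is that you never actually produce the generators, and for this proposition that is essentially the whole content. You correctly identify step (2) as the obstacle and then offer two unexecuted escape routes. The first (``approximate'' the M\"obius action by piecewise affine maps) cannot work as stated: an approximation will not preserve the relations $f_s^2=f_r^3=\mathrm{id}$ exactly, and without exact relations you do not get a homomorphism from $\mathbb{Z}/2 * \mathbb{Z}/3$ at all; you need an exact conjugacy (this is precisely what the Minkowski question-mark function provides) or an exact hand-built ping-pong configuration, neither of which you write down. The second route --- using the embedding of $PSL_2(\mathbb{Z})$ into Thompson's group $T$ --- is exactly how the paper closes the gap: it takes the two generators $a$ and $c$ of $T$ from the previous proposition and cites Fossas for the fact that $\langle a,c\rangle\cong PSL_2(\mathbb{Z})$, which settles (A) with no ping-pong argument needed. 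As submitted, your proof establishes the reduction to ``exhibit suitable $f_s,f_r$'' but not the proposition itself; committing to the Fossas embedding (or carrying out the ping-pong verification on explicit rational breakpoints) is required to finish.
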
	
\begin{proof}
$PSL_2(\mathbb{Z})$ is the subgroup of $T$ generated by $a$ and $c$,
see for example \cite{Fossas}.
To see that this representation satisfies property $(B)$, remark that
this action is conjugated by the Minkowski question mark symbol (ibid.)
to the action of $PSL_2(\mathbb{Z})$ over the projective line $\mathbb{R} \cup \{\infty\}$.
From this point of view, it is clear that any element of
$PSL_2(\mathbb{Z})$ different from the identity fixes at most two points. 
Hence if $gf(t) = f(t)$ for all $t$, then $g = \lambda$.

For this particular group, it is actually easy to work out all details
and produce a concrete aperiodic set of Wang tiles, represented in Fig~\ref{fig:psl2}. 
It is obtained by taking $d=a$ and $e=ac$ as generators (rather than $a$ and $c$)
and looking at them as acting on $[0,2]_{0\sim 2}$ (rather than
$[0,1]_{0\sim 1}$) by the formulas:

\[
	d(x) = \left\{\begin{array}{cl}
	  x/2 &  0 \leq x \leq 1\\
	  x-1/2 &  1 \leq x \leq 3/2\\
	  2x-2 &  3/2 \leq x \leq 2\\
\end{array}\right.
	e(x) = \left\{\begin{array}{cl}
	  x+1 &  0 \leq x \leq 1\\
	  x-1 &  1 \leq x \leq 2\\
\end{array}\right.\]

(Of course, such details may also be provided for Thompson
group $T$. However the presence of the generator $b$ produced an set
of tiles too large to be depicted here.)
\end{proof}

\newcommand\wangsl[5]{
        \draw[line width=0.2pt] (0,0) -- +(3,2) -- +(0,4) -- +(0,0);
        \draw[line width=0.2pt] (0,0) -- +(3,2) -- +(6,0) -- +(0,0);
\begin{scope}[xshift=6cm, yshift=4cm]
        \draw[line width=0.2pt] (0,0) -- +(-3,-2) -- +(0,-4) -- +(0,0);
        \draw[line width=0.2pt] (0,0) -- +(-3,-2) -- +(-6,0) -- +(0,0);
        \draw[line width=0.2pt] (0,0) -- +(-3,-2) -- +(-3,0) -- +(0,0);
\end{scope}%
                \draw (1,2) node {#1};
                \draw (5,2) node {#2};
                \draw (2,3.5) node {#3};
                \draw (4,3.5) node {#4};
                \draw (3,.5) node {#5};
}

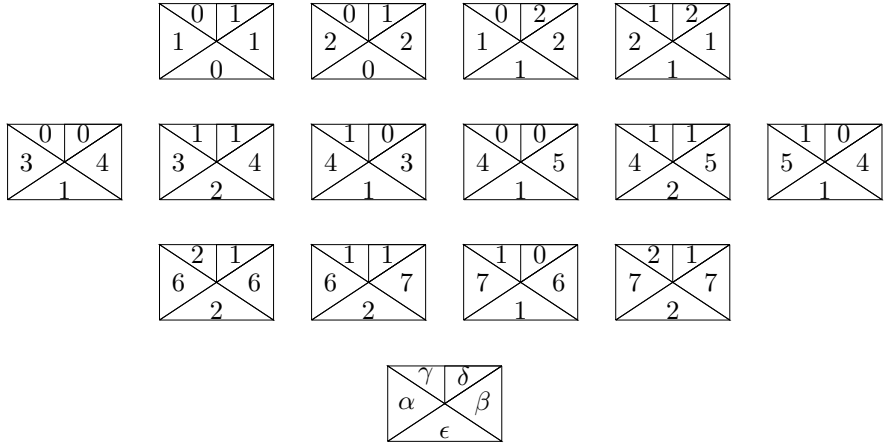
\begin{figure}[htbp]
	\begin{center}
\begin{tikzpicture}[scale=0.25]
\begin{scope}[xshift=0cm, yshift=0cm]
\wangsl{1}{1}{0}{1}{0}
\end{scope}
\begin{scope}[xshift=8cm, yshift=0cm]
\wangsl{2}{2}{0}{1}{0}
\end{scope}
\begin{scope}[xshift=16cm, yshift=0cm]
\wangsl{1}{2}{0}{2}{1}
\end{scope}
\begin{scope}[xshift=24cm, yshift=0cm]
\wangsl{2}{1}{1}{2}{1}
\end{scope}
\end{tikzpicture}
\end{center}
\begin{center}
\begin{tikzpicture}[scale=0.25]
\begin{scope}[xshift=0cm, yshift=0cm]
\wangsl{3}{4}{0}{0}{1}
\end{scope}
\begin{scope}[xshift=8cm, yshift=0cm]
\wangsl{3}{4}{1}{1}{2}
\end{scope}
\begin{scope}[xshift=16cm, yshift=0cm]
\wangsl{4}{3}{1}{0}{1}
\end{scope}
\begin{scope}[xshift=24cm, yshift=0cm]
\wangsl{4}{5}{0}{0}{1}
\end{scope}
\begin{scope}[xshift=32cm, yshift=0cm]
\wangsl{4}{5}{1}{1}{2}
\end{scope}
\begin{scope}[xshift=40cm, yshift=0cm]
\wangsl{5}{4}{1}{0}{1}
\end{scope}
\end{tikzpicture}
\end{center}
\begin{center}
\begin{tikzpicture}[scale=0.25]
\begin{scope}[xshift=0cm, yshift=0cm]
\wangsl{6}{6}{2}{1}{2}
\end{scope}
\begin{scope}[xshift=8cm, yshift=0cm]
\wangsl{6}{7}{1}{1}{2}
\end{scope}
\begin{scope}[xshift=16cm, yshift=0cm]
\wangsl{7}{6}{1}{0}{1}
\end{scope}
\begin{scope}[xshift=24cm, yshift=0cm]
\wangsl{7}{7}{2}{1}{2}
\end{scope}
\end{tikzpicture}
\end{center}
\begin{center}	
	\begin{tikzpicture}[scale=0.25]
		\wangsl{$\alpha$}{$\beta$}{$\gamma$}{$\delta$}{$\epsilon$}
	\end{tikzpicture}
\end{center}	
\caption[A strongly aperiodic set of $14$ Wang tiles over $\mathbb{Z} \times PSL_2(\mathbb{Z})$]{A strongly aperiodic set of $14$ Wang tiles over $\mathbb{Z}  \times PSL_2(\mathbb{Z})$, where $PSL_2(\mathbb{Z})$ is generated by
  $d = \begin{pmatrix}  0 &-1\\  1 & 1 \end{pmatrix}$
 and $e = \begin{pmatrix} 0 & -1 \\ 1 & 0 \\ \end{pmatrix}$.
  The rules are as follows:
  Let $x$ be the tile in position $(n,g)$.
  Then the tile $y$ in position $(n+1,g)$ must satisfy $y_\alpha =
  x_\beta$, the tile $y$ in position $(n,gd)$ must satisfy $y_\epsilon
  =  x_\gamma$, the tile $y$ in position $(n,ge)$ must satisfy $y_\epsilon =  x_\delta$.
  }
\label{fig:psl2}
\end{figure}

\clearpage
\subsection{Generalizations}
The construction of Kari works for more than piecewise affine
homeomorphisms of $[0,1]$. It works for any partial piecewise affine
map from $[0,1]^d$ to its image.

\begin{theorem}[\cite{Kari5}]
	\label{thm:kari}
	Let $A \in M_{m\times n}(\mathbb{Q})$ be a (possibly non square) matrix with rational
	coefficients, $b \in \mathbb{Q}^m$ a rational vector and $f(x) = Ax + b$

	Then there exists a set of Wang tiles $(C,\phi,\psi)$ over $\mathbb{Z}$
	(generated by $1$) and two maps $out, in$ from $C$ to
	$\{0,1\}^m$ and $\{0,1\}^n$ so that the two following properties hold
	
	\begin{itemize}
		\item For any configuration $x$ of $X_C$, $f(cont_n(in(x)) = cont_m (out(x)$)
		  
		\item For any $y \in [0,1]^n$ so that $f(y) \in [0,1]^m$,
		  there exists a configuration $x$ of $C_G$ so that $in(x) = disc_n(y)$ and $out(x) = disc_m(f(y))$
    \end{itemize}		
	where $disc_i$ and $cont_i$ are the natural $i$-dimensional
	analogues of $disc$ and $cont$
\end{theorem}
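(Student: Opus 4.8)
The plan is to reduce the matrix case to many copies of Kari's one-dimensional gadget, glued so that they share their input streams, exactly as in the proof of Corollary~\ref{cor:fam}. The only genuinely new ingredient is to first upgrade that gadget from a single affine map $y \mapsto ay+b$ of one real to a single affine \emph{functional} $y \mapsto \sum_{i=1}^n a_i y_i + b$ of an $n$-tuple of reals (the case $m=1$ of the theorem); the passage to general $m$ is then purely combinatorial.

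For $m=1$, given a row $f(y) = \sum_{i=1}^n a_i y_i + b$ with $a_i,b \in \mathbb{Q}$, I would build a Wang tile set over $\mathbb{Z}$ whose tiles carry an input vector $p \in \{0,1\}^n$ (recorded by $in$), an output bit $q \in \{0,1\}$ (recorded by $out$), and a carry $c$ transmitted along $\mathbb{Z}$ through the colours $\phi,\psi$, the matching rule $\phi(x_g) = \psi(x_{g-1})$ encoding the balance equation $c' = c + \sum_i a_i p_i + b - q$. Writing $D$ for a common denominator of the $a_i$ and $b$, every reachable carry is a multiple of $1/D$, and discarding the tiles whose carries fall outside a fixed bounded window confines the colour alphabet to a finite set of rationals. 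Telescoping the balance equation over a window $[-N,N]$ cancels the carries, so dividing by $2N+1$ and letting $N \to \infty$ yields $cont_1(out(x)) = \sum_i a_i\, cont_n(in(x))_i + b$ for every $x \in X_C$, which is the first bullet.

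For the second bullet, given $y \in [0,1]^n$ with $f(y) \in [0,1]$, I would set $in_i = disc(y_i)$ and define the outputs and carries greedily by the balance equation, choosing each $q_k \in \{0,1\}$ so as to keep the accumulated carry inside the admissible window. The key estimate is that the partial sums $S_i(k)$ of $disc(y_i)$ stay within $1$ of $k\,y_i$, so the running discrepancy $\sum_i a_i S_i(k) + bk - O(k)$ (with $O(k)$ the partial sum of the output) stays bounded uniformly in $k$; combined with $f(y) \in [0,1]$ this guarantees the greedy choice never forces a carry out of the window, producing a legal configuration with the prescribed $in$ and $out$. Finally, for general $m$, I would run this construction once per row of $A$, obtaining tile sets $C^{(1)},\dots,C^{(m)}$ that all read the same $n$ input coordinates, and glue them by the fibred product $C = \{\, y \in \prod_j C^{(j)} : \exists p \in \{0,1\}^n,\ \forall j,\ in^{(j)}(y^{(j)}) = p \,\}$, exactly as in Corollary~\ref{cor:fam} but with an $n$-dimensional shared input; setting $in = in^{(1)} = \dots = in^{(m)}$ and $out = (out^{(1)},\dots,out^{(m)})$ gives both bullets coordinate by coordinate. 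The main obstacle is the scalar-functional step, and within it the existence half: showing that the carry alphabet is finite and, above all, that the greedy assignment never overflows the window is where the hypothesis $f(y)\in[0,1]^m$ and the boundedness of the Beatty partial sums of $disc$ must be combined carefully.
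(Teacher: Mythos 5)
The paper does not actually prove this theorem: it is imported from Kari's work \cite{Kari5}, so there is no internal proof to compare against. Your sketch is, in substance, Kari's original argument --- balanced (Beatty) representations of the inputs, a carry transported along $\mathbb{Z}$ through the horizontal colours subject to the balance equation $c' = c + \sum_i a_i p_i + b - q$, finiteness of the colour alphabet from the common denominator together with a bounded window, telescoping for the first bullet, and a fibred product over the rows of $A$ (exactly as in Corollary~\ref{cor:fam}) for general $m$ --- and the key estimate you isolate, $|\lfloor k y_i\rfloor - k y_i| < 1$, is the right one. Two points deserve more care. First, the second bullet requires $out(x) = disc_m(f(y))$ \emph{exactly}, so a greedy choice of the $q_k$ is not obviously the prescribed sequence; the clean route is to set $q_k = disc(f_j(y))_k$ outright and take $c_k = \sum_i a_i\bigl(\lfloor k y_i\rfloor - k y_i\bigr) - \bigl(\lfloor k f_j(y)\rfloor - k f_j(y)\bigr)$, for which the balance equation is an identity and the bound $|c_k| \leq \sum_i |a_i| + 1$ is immediate, with no case analysis about overflow. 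Second, your passage to the limit in the first bullet silently exchanges $\limsup$ with a linear combination of sequences; this is harmless for $n=1$ and $a>0$ but is a genuine issue for several inputs or negative coefficients. That wrinkle is already present in the statement as quoted (with $cont$ defined via $\limsup$), and in every application in this paper the relevant averages are honest limits, so it is not a defect of your argument specifically --- but a self-contained proof should either assume the input averages converge or phrase the first bullet along common subsequences.
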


Now we will be able to prove a theorem similar to the previous one for
a larger class of maps (hence a larger class of groups).
There are three directions in which we can go:
\begin{itemize}
	\item Go to higher dimensions
	\item Look at piecewise affine maps defined on compact subsets
	  of $\mathbb{R}^d$ different from $[0,1]^d$.
	\item Consider other identifications than $0\sim 1$
\end{itemize}	
In the following we will not use the full possible generalisation, and
will not identify any points in our sets. 
This will be sufficient for
the applications and already relatively painful to define.
However, this means that the next definition will not encompass
PA-recognizable groups.

\begin{defn}
	Let ${\cal F} = \{ f_i: B_i \mapsto B'_i, i =1 \dots k\}$ be 
	a finite set of piecewise affine rational homeomorphisms, where
	each $B_i$ and $B'_i$ is a finite union of bounded rational polytopes
	of $\mathbb{R}^n$.
	
	Let $S_{\cal F}$ be the closure of the set $f_i$ and
	$f_i^{-1}$ under composition.
	Each element of $S_{\cal F}$ is a piecewise affine homeomorphism,
	whose domain is the union of finitely many bounded rational
	polytopes, and may
	possibly be empty.

	Let $T_{\cal F}$ be the common domain of all functions in $S_{\cal F}$.

	Then the \emph{group} $G_{\cal F}$ generated by $\cal F$ is
	the group $\{f_{|T_{\cal F}}, f \in S_{\cal F}\}$.
\end{defn}

\begin{defn}
	A f.g. group $G$ is PA'-recognizable iff there exists a finite set
	$\cal F$ of piecewise affine rational homeomorphisms so that
	\begin{itemize}
	  \item	(A)	$G$ is isomorphic to $G_{\cal F}$.
	  \item (B) For any $t \in T_{\cal F}$, if $gf(t) = f(t)$ for all
		$f$, then $g = \lambda$
	\end{itemize}		
\end{defn}
Note that $T_{\cal F}$ might not be computable in general. In
particular, it is not clear that any PA'-recognizable has decidable
word problem.

\begin{theorem}
	If $G$ is PA'-recognizable, then the complement of the word
	problem on $G$ is recognizable. In particular, if $G$ is
	recursively presented, the word problem on $G$ is decidable
\end{theorem}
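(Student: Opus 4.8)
The plan is to turn the algebraic question ``$w =_G \lambda$?'' into a dynamical one about the piecewise affine action, and then to show that \emph{non-}triviality, unlike triviality, admits a finite certificate. Fix the isomorphism of property (A) and let $f_1,\dots,f_p$ be the generating homeomorphisms. To a word $w \in \mathbb{F}_p$ associate the partial map $F_w = f_{i_1}^{\varepsilon_1}\circ\cdots\circ f_{i_k}^{\varepsilon_k}$ obtained by composing the corresponding generators and inverses. Since each $f_i^{\pm 1}$ is a piecewise affine rational homeomorphism between finite unions of bounded rational polytopes, the composition $F_w$ is again such a map, and its full symbolic description (the linearity pieces, the affine coefficients, the domain) is \emph{computable} from $w$. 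By (A), $F_w \in S_{\cal F}$ and $w =_G \lambda$ holds exactly when $F_w$ restricts to the identity on $T_{\cal F}$; by (B) the action of $G$ on $T_{\cal F}$ is faithful, so the task reduces to semi-deciding whether $F_w$ genuinely moves some point of $T_{\cal F}$.

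The key idea for recognizability is an \emph{invariant-polytope certificate}. I would enumerate all nonempty rational polytopes $K$ and, for each, run the decidable checks: that $K$ is contained in $\mathrm{dom}(f_i)\cap\mathrm{dom}(f_i^{-1})$ and that $f_i(K)\subseteq K$ and $f_i^{-1}(K)\subseteq K$ for every generator $i$. Any $K$ passing these finitely many tests is invariant under all generators and their inverses, hence under all of $S_{\cal F}$, so every element of $S_{\cal F}$ is defined on $K$ and therefore $K \subseteq T_{\cal F}$ \emph{genuinely}. If in addition the computed affine description of $F_w$ differs from the identity at some point of $K$ (again a decidable rational polytope/affine inequality test), then $F_w$ moves an honest point of $T_{\cal F}$, whence $w \neq_G \lambda$. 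All verifications are decidable, so the set of words admitting such a certificate is recursively enumerable; this establishes soundness of the procedure.

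It remains to argue completeness: whenever $w \neq_G \lambda$ some certificate exists. Here one picks $t^\ast \in T_{\cal F}$ with $F_w(t^\ast)\neq t^\ast$; by continuity $F_w$ is non-identical on a whole neighbourhood of $t^\ast$, and the closure of the orbit $\overline{G\,t^\ast}$ is a compact $G$-invariant subset of $T_{\cal F}$ meeting that neighbourhood. The plan is to approximate this invariant compactum from outside by a rational polytope that is itself invariant, using that the $f_i$ are genuine homeomorphisms of rational polytope regions and that $T_{\cal F} = \bigcap_n T_n$ with each $T_n$ (the common domain of all words of length $\le n$) a computable union of rational polytopes.

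I expect this last step to be the main obstacle, for a precise reason: the naive approximations $T_n$ exhibit a ``sliver'' phenomenon, in that pieces of $T_n$ may protrude outside $T_{\cal F}$ and collapse into it only in the limit, so a finite snapshot could falsely flag a moved point that does not actually survive all compositions. Invariance of the certificate $K$ is exactly what rules this out, since it forces the entire orbit to stay defined; the delicate part is therefore guaranteeing that a rational \emph{invariant} polytope witnessing the non-fixed point genuinely exists, which is where the rationality and the homeomorphism (domain/range matching) structure of $\cal F$, together with the faithfulness hypothesis (B), must be exploited. Once completeness is secured, the enumerated certificates capture exactly $\overline{W(G)}$, so the complement of the word problem is recognizable; and if moreover $G$ is recursively presented, then $W(G)$ is recursively enumerable as well, so $W(G)$ is recursive and the word problem is decidable.
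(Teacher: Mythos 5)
Your soundness direction is fine, but the completeness step you flag as ``the main obstacle'' is not merely delicate --- it is genuinely false as stated, and the paper's own example already refutes it. For the representation of $\mathbb{Z}$ by the rotation $f$ of angle $\arccos(3/5)$ on $A=\{(x,y)\in[-1,1]^2: |x|+|y|\geq 1\}$, one has $T_{\cal F}=S^1$. Any rational polytope (or finite union of polytopes) $K$ with $f(K)\subseteq K$ and $f^{-1}(K)\subseteq K$ is a compact set invariant under an irrational rotation, hence a union of full circles centred at the origin; a nonempty finite union of polytopes covering a circle of positive radius contains a chord, hence a whole sub-annulus, which cannot fit inside $A$, and the radius-$0$ circle is the origin, which is not in $A$. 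So there is \emph{no} invariant-polytope certificate at all, and your procedure would never recognize any $w\neq_G\lambda$ for this group. The underlying problem is that you are trying to certify non-triviality by exhibiting a \emph{nonempty} invariant object inside $T_{\cal F}$, and $T_{\cal F}$ need not contain any rational piecewise-linear invariant subset.

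The paper certifies the opposite thing: an \emph{emptiness}. Given $g$ (as a word), set
\[ D = \left\{ t \;\middle|\; \forall f \in S_{\cal F},\ f(t) \text{ is defined and } g(f(t)) = f(t)\right\}. \]
Automatically $D\subseteq T_{\cal F}$, and property (B) --- which is stronger than faithfulness: it says every single orbit separates $g$ from the identity --- gives $g\neq_G\lambda$ iff $D=\emptyset$ (for the converse, any $t\in T_{\cal F}\neq\emptyset$ lies in $D$ when $g$ acts as the identity). Now $D$ is a decreasing intersection, over an enumerable family of compositions $f$, of compact sets each cut out by rational polytope membership and affine equalities, so each finite sub-intersection has decidable emptiness and, by compactness, $D=\emptyset$ iff some finite sub-intersection is already empty. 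Emptiness of a nested intersection of computable compacta always has a finite certificate; nonemptiness of an invariant rational polytope does not. Replacing your certificate by this one repairs the argument; the rest of your setup (computing $F_w$ from $w$, reducing to the dynamics) is consistent with the paper's.
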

\begin{proof}
	We assume that $G \not= \{\lambda\}$, hence $T_{\cal F} \not=\emptyset$.
	
	Let $g$ be an element of $G$, given by composition of some
	piecewise affine maps.
    Let \[D = \left\{ t \middle| \forall f \in S_{\cal F}, f(t) \text{ is defined
		and } g(f(t)) = f(t)\right\}\]
	
	Note that $D \subseteq T_{\cal F}$.
	Furthermore, $g \not= \lambda$ iff $D = \emptyset$ by property $(B)$.
	
    This gives a semi algorithm to decide if $g \not=\lambda$.
\end{proof}	

\begin{theorem}
	If $G$ is PA'-recognizable, $\mathbb{Z} \times G$ admits a strongly
	aperiodic subshift of finite type.
\end{theorem}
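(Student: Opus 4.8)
The plan is to run the same construction as in the previous theorem (the PA-recognizable case), but carefully re-examining each step to see what breaks when we only have PA'-recognizability, i.e. when the maps are defined on finite unions of bounded rational polytopes in $\mathbb{R}^n$ rather than on $[0,1]_{/0\sim1}$, and when no points are identified. First I would fix a generating set $S$ of $G$, realize each generator $h\in S$ as a piecewise affine rational homeomorphism $f_h$ on its domain, and invoke the multidimensional Kari construction (Theorem~\ref{thm:kari} in its matrix form) together with Corollaries~\ref{cor:fam} and~\ref{cor:aff} to obtain a single set of Wang tiles $(C,\phi,\psi)$ over $\mathbb{Z}$ with maps $in$ and $(out_h)_{h\in S}$, now valued in $\{0,1\}^n$ rather than $\{0,1\}$. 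I would then assemble the tileset $(C',(\phi_i),(\psi_i))$ over $\mathbb{Z}\times G$ exactly as before: $\phi_1=\phi$, $\psi_1=\psi$, $\psi_h=in$, $\phi_h=out_h$.

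Next I would transcribe the dynamical reading of a configuration. For $x\in X_{C'}$ and $g\in G$, set $x_g(m)=x_{(m,g)}$ and $z_g=cont_n(in(x_g))\in[0,1]^n$; the same chain of equalities as in the previous proof gives $f_h(z_g)=z_{gh^{-1}}$ for every generator $h$, hence $z_{g^{-1}}=f_g(z_\lambda)$ for all $g\in G$. Nonemptiness follows from the second clause of Theorem~\ref{thm:kari}: pick any $t\in T_{\cal F}$ (nonempty since $G\neq\{\lambda\}$; the case $G=\{\lambda\}$ is trivial) and set $z_{g^{-1}}=f_g(t)$, which is a legitimate orbit entirely inside the common domain, so all the required discretizations $disc_n(f_g(t))$ exist and glue into a configuration.

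For aperiodicity, suppose $(p,h)\in\mathbb{Z}\times G$ stabilizes $x$. The $G$-component argument is unchanged: $z_{gh^{-1}}=z_g$ for all $g$ forces $f_h f_g(z_\lambda)=f_g(z_\lambda)$ for all $g$, and property~(B) of PA'-recognizability — applied at the point $t=z_\lambda\in T_{\cal F}$ — yields $h=\lambda$. For the $\mathbb{Z}$-component, if $p\neq0$ then each $x_g$ is $p$-periodic, so each $z_g$ ranges over a finite set $Z\subseteq T_{\cal F}$ closed under all $f_h$; every element of $G$ then acts as a permutation of $Z$, and by~(B) distinct group elements induce distinct permutations, so $G$ embeds in a finite symmetric group, contradicting $G$ infinite.

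The main obstacle — and the only place the weaker hypothesis really bites — is ensuring that the orbit $(z_g)$ stays inside the genuine common domain $T_{\cal F}$ so that property~(B) is applicable and so that the second clause of Kari's theorem can be invoked to build configurations. In the original proof the maps were globally defined on $[0,1]_{/0\sim1}$, so domains were never an issue; here I must verify that starting the orbit at a point of $T_{\cal F}$ keeps every $f_g(t)$ defined, and conversely that any $z_\lambda$ arising from an actual configuration of $X_{C'}$ necessarily lies in $T_{\cal F}$ (because $f_g(z_\lambda)=z_{g^{-1}}$ is forced to exist for every $g$ by the tiling constraints). I would treat $G=\{\lambda\}$ separately and note that the strong aperiodicity we obtain is in the finite-stabilizer sense, which here is in fact the trivial-stabilizer sense.
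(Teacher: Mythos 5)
Your proposal is correct and follows exactly the paper's approach: the paper's own proof of this theorem is literally ``Same proof as before,'' i.e.\ a rerun of the PA-recognizable construction, which is what you do. You in fact supply more detail than the paper by explicitly checking the only new issue (that the orbit $(z_g)$ stays inside, and is forced into, the common domain $T_{\cal F}$), and that verification is sound.
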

\begin{proof}
	Same proof as before. 
\end{proof}	
Here a few applications:

\begin{proposition}
	$\mathbb{Z}$ is PA'-recognizable.
Hence $\mathbb{Z} \times \mathbb{Z}$ admits a strongly aperiodic
subshift of finite type.
\end{proposition}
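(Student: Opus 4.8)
The plan is to produce a single piecewise affine rational homeomorphism whose generated group is $\mathbb{Z}$ and which satisfies condition (B), and then to invoke the preceding theorem (``if $G$ is PA$'$-recognizable, then $\mathbb{Z}\times G$ admits a strongly aperiodic SFT'') with $G = \mathbb{Z}$ to obtain the statement about $\mathbb{Z}\times\mathbb{Z}$. The natural candidate is the Kari circle map
\[
  f(x) = \begin{cases} (2x-1)/3 & \text{if } 1/2 \le x \le 1,\\ (4x+1)/3 & \text{if } 0 \le x \le 1/2, \end{cases}
\]
which we already know is a homeomorphism of $[0,1]_{/0\sim 1}$ with \emph{no} periodic points, so that $\langle f\rangle \cong \mathbb{Z}$ and every orbit is infinite. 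The only obstruction is that PA$'$-recognizability explicitly forbids identifying $0$ with $1$, so $f$ cannot be used verbatim.

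I would first point out that this obstruction is genuine in dimension one, which is precisely why the definition is phrased over $\mathbb{R}^n$. Indeed, a piecewise affine homeomorphism of a finite union of disjoint closed rational intervals must carry the set of interval endpoints (the non-manifold points) to itself; since there are only finitely many of them, they are periodic, and any periodic point $t$ of period $p$ placed in $T_{\cal F}$ immediately defeats (B) (take $g = f^{p}$, which fixes the whole finite orbit of $t$ but is nontrivial). Hence one is forced to work in $\mathbb{R}^2$, using the extra freedom the PA$'$ framework grants.

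The key step is then to realize the circle \emph{without identification} as the boundary $\partial Q$ of a rational polygon $Q$ in $\mathbb{R}^2$, say the unit square; this $\partial Q$ is a finite union of bounded rational segments and is a closed $1$-manifold with no endpoints. Fixing a rational piecewise affine arc-length parametrization $\gamma : [0,1]_{/0\sim 1}\to \partial Q$, I would transport $f$ to $F = \gamma\circ f\circ\gamma^{-1} : \partial Q\to\partial Q$. Subdividing the segments of $\partial Q$ at the breakpoints of $f$ and at the preimages of the corners of $Q$, each resulting piece of $F$ sends one rational segment affinely onto another, so $F$ is a piecewise affine rational homeomorphism of $\partial Q$ onto itself, and likewise for $F^{-1}$. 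Taking ${\cal F} = \{F\}$, every power of the homeomorphism $F$ is total on $\partial Q$, so $S_{\cal F} = \{F^m : m\in\mathbb{Z}\}$ and $T_{\cal F} = \partial Q$. This polygon realization is the part requiring care (it is the ``relatively painful'' bookkeeping the section warns about), though conceptually routine; it is the only real obstacle.

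It then remains to check the two conditions. For (A), $F$ is conjugate to $f$, hence of infinite order with no periodic points, so $G_{\cal F} = \langle F\rangle \cong \mathbb{Z}$. For (B), let $t\in T_{\cal F}$ and $g = F^{k}\in G_{\cal F}$ satisfy $g\,h(t) = h(t)$ for all $h\in S_{\cal F}$; applying this to $h = F$ gives $F^{k}(F(t)) = F(t)$, so $F(t)$ is a fixed point of $F^{k}$, and since $F$ has no periodic points this forces $k = 0$, i.e. $g = \lambda$. Thus $\mathbb{Z}$ is PA$'$-recognizable, and the preceding theorem applied with $G=\mathbb{Z}$ yields a strongly aperiodic SFT on $\mathbb{Z}\times\mathbb{Z}$.
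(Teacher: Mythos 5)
Your construction is correct, but it is genuinely different from the paper's. The paper does not conjugate the Kari circle map onto a polygon boundary; instead it takes a single \emph{globally} affine map, the rational rotation $\bigl(\begin{smallmatrix}3/5 & 4/5\\ -4/5 & 3/5\end{smallmatrix}\bigr)$ restricted to the square annulus $A=\{(x,y)\in[-1,1]^2 : |x|+|y|\geq 1\}$, and lets the formalism do the work: since the rotation angle $\arccos(3/5)$ is an irrational multiple of $2\pi$, every orbit is dense in its circle, so the only points surviving all iterates are those of the unit circle, i.e.\ $T_{\cal F}=S^1$, and density of orbits gives (B). The paper's route buys a two-line description of the map at the cost of a number-theoretic fact (irrationality of $\arccos(3/5)/\pi$) and the small argument identifying $T_{\cal F}$; your route buys reuse of the already-established aperiodicity of the Kari map at the cost of the conjugation bookkeeping (which you handle correctly: each affine piece of $\gamma\circ f\circ\gamma^{-1}$ carries a rational segment of $\partial Q$ affinely onto another, and (B) follows from the absence of periodic points). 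One caveat: your preliminary claim that a one-dimensional realization is \emph{impossible} is not actually established by your argument. The generators are homeomorphisms $B_i\to B_i'$ between possibly different unions of intervals, so they need not permute a fixed finite set of endpoints, and more importantly $T_{\cal F}$ can be a proper (even Cantor) subset of the generators' domains --- as in the paper's treatment of Thompson's group $V$ --- so a periodic endpoint of some $B_i$ need not lie in $T_{\cal F}$ and need not defeat (B). Since this remark is only motivation for moving to $\mathbb{R}^2$ and your actual construction lives there, it does not affect the validity of the proof, but it should be presented as a heuristic rather than a proof.
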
		

\begin{proof}
	Let $A = \left\{ (x,y) \in [-1,1]^2 , |x|+|y| \geq 1\right\}$.
	$A$ is the union of four bounded polytopes.
	
	Let \[
	\begin{array}{cccc}
	f: &A &\rightarrow& f(A)\\
&      \begin{pmatrix}
		  x\\y\\
		  \end{pmatrix}
		  &\mapsto &
		  \begin{pmatrix}
			  3/5 & 4/5 \\
			  -4/5 & 3/5\\
		  \end{pmatrix}			  
      \begin{pmatrix}
		  x\\y\\
		  \end{pmatrix}
\end{array}		  
	  \]
	  And let ${\cal F} = \{f\}$.
	$f$ is clearly an homeomorphism.
    Note that $f$ is a rotation of angle $\arccos 3/5$.

	Now it is easy to see that $T_{\{f\}} = S_1 = \{ (x,y) | x^2 + y^2 =
	  1\}$, and that $G_{\{f\}}$ is isomorphic to $\mathbb{Z}$.
	Furthermore, it is also clear that the orbit of any point of
	$T_{\{f\}}$ is dense in $T_{\{f\}}$, which implies property $(B)$.
	Hence  $\mathbb{Z}$	is PA'-recognizable.
\end{proof}	

\begin{proposition}
Any finitely generated subgroup $G$ of rational matrices of a compact
matrix group is PA'-recognizable. Hence $\mathbb{Z}  \times G$ admits a strongly aperiodic
subshift of finite type.

\end{proposition}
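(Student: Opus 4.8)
The plan is to realise the linear action of $G$ on $\mathbb{R}^n$ as a finite family of piecewise affine rational homeomorphisms of a bounded region, and then to read off $T_{\cal F}$ as the set of points whose whole $G$-orbit stays inside that region. Since $G$ sits in a compact matrix group it is bounded: every entry of every element of $G$ and of its inverse is bounded, and averaging the standard inner product over the compact closure $\overline{G}$ produces a $G$-invariant Euclidean form $q$, so that each orbit $Gt$ lies on a single $q$-sphere. I would fix rational $0 < \varepsilon < M$ and let $B = \{x \in \mathbb{R}^n : \varepsilon \le \|x\|_\infty \le M\}$, a finite union of bounded rational polytopes that avoids the origin. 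For each generator $h \in S$, let $f_h$ be the restriction to $B$ of the rational linear map $x \mapsto A_h x$; its image $A_h(B)$ is again a union of rational polytopes, so $f_h$ is a piecewise affine rational homeomorphism, and I set ${\cal F} = \{f_h : h \in S\}$.

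First I would identify the common domain. A composite of the $f_h^{\pm 1}$ is defined at $t$ exactly when every partial orbit point lies in $B$, so $T_{\cal F} = \{t : gt \in B \text{ for all } g \in G\}$. Because $\|\cdot\|_\infty$ is comparable to $\sqrt{q}$ and $q$ is constant along orbits, there are constants $\alpha,\beta$ with $\alpha\sqrt{q(t)} \le \|gt\|_\infty \le \beta\sqrt{q(t)}$ for all $g$; hence every $t$ with $\sqrt{q(t)} \in [\varepsilon/\alpha, M/\beta]$ lies in $T_{\cal F}$, provided $M/\varepsilon$ is taken large enough. Thus $T_{\cal F}$ is a thick $q$-annulus: it is $G$-invariant (since $Gt = G(ht)$), it avoids the origin, and it contains $n$ linearly independent vectors. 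Two distinct elements of $G$, being distinct linear maps, must differ on one of those vectors, so the restriction map $G \to G_{\cal F}$ is an isomorphism and property $(A)$ holds.

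The hard part is property $(B)$: for every $t \in T_{\cal F}$, an element $g \in G$ fixing the whole orbit $\{f(t) : f \in S_{\cal F}\} = Gt$ must equal $\lambda$. Such a $g$ fixes $\mathrm{span}(Gt)$ pointwise, and $\mathrm{span}(Gt)$ is a $G$-invariant subspace, so $(B)$ amounts to asking that the pointwise stabiliser in $G$ of every orbit-span be trivial. The obstruction is a degenerate orbit lying in an invariant subspace on which some nontrivial element acts as the identity — the fixed axis of a rotation is the model example, and a shell cannot avoid a whole subspace. I would remove it by using the freedom in the choice of rational model: instead of the given embedding, choose a faithful rational representation of $G$ into a compact group whose real-irreducible constituents are all faithful, and run the construction with that representation. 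Then $\mathrm{span}(Gt)$ contains a faithful constituent, $g$ restricts to the identity on it, and $g = \lambda$, which is $(B)$. This is where the real work lies, and where the hypotheses bite: for infinite $G$ such a model can be produced (for instance $\mathbb{Z}^k$ embeds densely and real-irreducibly in $SO(2)$ via independent Pythagorean rotations, and free rotation groups already act irreducibly on $\mathbb{R}^3$), whereas for finite $G$ it may genuinely fail — consistently with the fact that $\mathbb{Z}\times G$ is then two-ended and carries no strongly aperiodic SFT at all.

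Finally, with $(A)$ and $(B)$ in hand, $G$ is PA'-recognizable, and the preceding theorem immediately yields a strongly aperiodic subshift of finite type on $\mathbb{Z}\times G$.
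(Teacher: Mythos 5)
Your bounded rational domain, the invariant form $q$ obtained by averaging, the identification of $T_{\cal F}$ with the set of points whose $G$-orbit stays in the domain, and property (A) all match the spirit of the paper's argument. The gap is in property (B). You correctly isolate the obstruction (a point $t$ whose orbit span is a proper invariant subspace on which some $g\neq\lambda$ acts trivially), but your proposed repair --- replace the given representation by a faithful rational one all of whose real-irreducible constituents are faithful --- is not available in general. Such a representation need not exist: $G=\mathbb{Z}\times(\mathbb{Z}/2\times\mathbb{Z}/2)$ is an infinite f.g. group of rational matrices inside the compact group $SO(2)\times O(1)\times O(1)$, yet it has no faithful real-irreducible representation (a real-irreducible representation of an abelian group has dimension at most $2$, and the centralizer in $O(2)$ of an infinite-order rotation is $SO(2)$, which contains only one involution, so $(\mathbb{Z}/2)^2$ cannot embed); hence no faithful representation of it has all constituents faithful. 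So your parenthetical claim that ``for infinite $G$ such a model can be produced'' is false, and your remark that the statement should fail for finite $G$ conflates two things: the paper's corollary asserts every finite group \emph{is} PA'-recognizable, and it is only the passage to the strongly aperiodic SFT on $\mathbb{Z}\times G$ that needs $G$ infinite.

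The paper closes the gap by shrinking the domain rather than changing the representation. Fix a rational $y$ with trivial stabilizer whose orbit spans $\mathbb{R}^k$, decompose $\mathbb{R}^k=V^1\oplus\dots\oplus V^p$ into irreducible $G$-invariant subspaces orthogonal for the invariant form, and observe that each projection $y^i=P^iy$ is nonzero, so the orbit closure of each $y^i$ is bounded away from $0$ and bounded above. The domain is then a rational polytope $T'$ squeezed between $\{t \mid \forall i,\ r^i<|P^it|_1<R^i\}$ and $\{t \mid \forall i,\ r^i/2<|P^it|_1<2R^i\}$; this still contains the orbit of $y$ (giving (A)), and it forces every $t\in T_{\cal F}$ to satisfy $P^it\neq 0$ for all $i$. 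Then $Gt^i$ spans $V^i$ by irreducibility, so any $g$ fixing the orbit of $t$ pointwise is the identity on each $V^i$ and hence on all of $\mathbb{R}^k$ --- no faithfulness of individual constituents is needed. Your annulus $\{\varepsilon\le\|x\|_\infty\le M\}$ cannot play this role because it contains points lying entirely inside a proper invariant subspace. If you replace your domain by the paper's $T'$, the rest of your argument goes through.
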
	
\begin{proof}
	We assume familiary with representation theory of linear compact
	groups, see e.g.  \cite[Chap 3.4]{OniVin}.
	Let $G$ be such a group, and let $M_1 \dots M_n$ be the matrices
	of size $k \times k$ that generate $G$.
	Using elementary linear algebra we may suppose there exists a
	rational vector $y \in \mathbb{R}^k$ so that $gy =y \rightarrow g =
	\lambda$, and $Gy$ spans $\mathbb{R}^k$.

    Now, as $G$ is a subgroup of a compact group, we can define a
	scalar product so that all matrices of $G$ are unitary.
    Let $\mathbb{R}^k = V^1 \oplus V^2 \dots \oplus V^p$ be a
	decomposition of $\mathbb{R}^k$ into orthogonal (for this scalar
	product) irreducible $G$-invariant vector spaces, that is $GV^i = V^i$ and no
	proper nonzero subspace of
	$V^i$ is $G$-invariant. This is possible as $G$ is a subgroup
	of a compact group hence completely reducible.
	Note that the vector spaces $V^i$ might not have rational bases.
	
	Let $P^i$ be the orthogonal projection onto $V^i$.
    For a vector $x$, let $x^i = P^i x$, so that $x = \sum_i x^i$.
	For a matrix $g \in G$, let $g^i: V^i \rightarrow V^i$ be the restriction
	of $g$ to $V^i$, so that
	$gx = \sum_i g^i x^i$.
	
    Recall there is $y$ so that $gy = y$ for $g \in G$ iff $g = \lambda$.
	As $Gy$ spans $\mathbb{R}^k$, $y^i \not= 0$ for all $i$.
	
    Let $i\in \{1\dots p\}$.
	Let $W^i$ be the topological closure of $Gy^i$.
	As $y^i$ is nonzero and $V^i$ is $G$-invariant, $W^i \subseteq
	V^i$ and is faraway from zero. That is, there exists constants
	$r^i, R^i > 0$ so that for all $y \in W^i$, $|y|_1 > r^i$ and
	$|y|_1 < R^i$.
	
	Now let  \[T = \{ y | \forall i, |P^i y|_1 > r^i \text{ and } |P^i y|_1 <
	  R^i\}\] and
	\[ T_0 = \{ y | \forall i, |P^i y|_1 > r^i/2  \text{ and } |P^i
		  y|_1  < 	  2R^i\}\]
	
    Note that $T$ is a polytope with real coordinates. Let $T'$ be an
	approximation of $T$ as a polytope with rational coordinates, so that
	$T \subseteq T' \subseteq T_0$.
	
	Now define the maps $f_i$ as restrictions of $M_i$ from $T'$ to $M_i T'$.
	Let ${\cal F}$ be the corresponding set of maps.
	
    We cannot describe $T_{\cal F}$ exactly, but it is clear that it
	contains $y$, as $T$ contains the $G$-orbit of $y$.
	As a consequence, $G_{\cal F}$ is isomorphic to $G$.
	
    Now we prove property $(B)$. Start from $t \in T_{\cal F}$ and $g \in G_{\cal F}$ so that 
	${gf(t) = f(t)}$ for all $f$.
	
	Let $i \in \{1,\dots, p\}$ and let $t^i= P^i t$ so that $t = \sum_i t^i$.
	As $t \in T_{\cal F} \subseteq T' \subset T_0$, we have $t^i \not= 0$.
	As a consequence, the orbit of $G t^i$ on $W_i$ spans a
    nonzero	$G$-invariant subspace of $V_i$, which is $V_i$ by irreducibility.
	Now, as $gf(t) = f(t)$ for all $f$, we conclude that $g$ is the
	identity on the orbit of $Gt^i$, hence $g$ is the identity on $V^i$.
	As this is true for all $i$, $g$ is the identity matrix.
\end{proof}	
\begin{cor}
	The free group $\mathbb{F}_2$ is PA'-recognizable. Every finite group is
	PA'-recognizable.
\end{cor}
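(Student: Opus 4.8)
The plan is to obtain both assertions as immediate consequences of the preceding proposition, which guarantees that any finitely generated subgroup of rational matrices lying inside a compact matrix group is PA'-recognizable. For each of the two groups it therefore suffices to realise it concretely as such a subgroup, i.e.\ to exhibit rational generating matrices that sit inside a compact matrix group and generate a group of the required isomorphism type.

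For $\mathbb{F}_2$ I would invoke the classical fact that $SO_3(\mathbb{Q})$ contains a free subgroup of rank $2$, using the two rational rotations
\[ A = \begin{pmatrix} 3/5 & -4/5 & 0 \\ 4/5 & 3/5 & 0 \\ 0 & 0 & 1 \end{pmatrix}, \qquad B = \begin{pmatrix} 1 & 0 & 0 \\ 0 & 3/5 & -4/5 \\ 0 & 4/5 & 3/5 \end{pmatrix}, \]
which are rotations by $\arccos(3/5)$ about two orthogonal axes. These are rational matrices in $SO_3(\mathbb{R})$, and $SO_3(\mathbb{R})$ is compact. Since $A$ and $B$ generate a group isomorphic to $\mathbb{F}_2$, the previous proposition applies verbatim and shows that $\mathbb{F}_2$ is PA'-recognizable.

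For an arbitrary finite group $G$ I would use Cayley's theorem to embed $G$ into a symmetric group $S_n$, and then represent $S_n$ by permutation matrices inside $O_n(\mathbb{Q})$. Permutation matrices have entries in $\{0,1\}$, hence are rational, and they lie in the compact group $O_n(\mathbb{R})$. Thus $G$ is a finite (in particular finitely generated) subgroup of rational matrices of a compact matrix group, and the proposition again yields that $G$ is PA'-recognizable.

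The only step that is not purely routine is the freeness of the pair $(A,B)$, i.e.\ checking that they generate a genuinely free group rather than one satisfying some hidden relation. I expect this to be the main (and only) obstacle, and I would dispatch it by the standard ping-pong argument: tracking the $5$-adic valuation of the entries of a reduced word in $A^{\pm 1}, B^{\pm 1}$ shows the denominators grow strictly with word length, so no nontrivial reduced word can be the identity. As this is by now a textbook fact about the $3$--$4$--$5$ rational rotations, I would simply cite it. Everything else—rationality of the entries, membership in a compact group, and the reduction to the preceding proposition—is immediate.
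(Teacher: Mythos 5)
Your proof is correct and is exactly the justification the paper intends: the corollary is stated without proof as an immediate consequence of the preceding proposition, with $\mathbb{F}_2$ realised via the classical free pair of rational rotations in the compact group $SO_3(\mathbb{R})$ and finite groups realised as permutation matrices in $O_n(\mathbb{Q})$. Your extra care about the ping-pong/$5$-adic argument for freeness is the right (standard) citation and nothing further is needed.
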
	

\begin{proposition}
	Thompson's group $V$ is PA'-recognizable.	
\end{proposition}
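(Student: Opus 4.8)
The plan is to realize $V$ through its standard faithful action on a Cantor set, embedded in $\mathbb{R}$ as a gapped self-similar set, so that the elements of $V$ — which are only piecewise affine \emph{bijections} of $[0,1)$, not homeomorphisms — become genuine piecewise affine homeomorphisms of finite unions of intervals. This is exactly the situation the PA'-framework was designed to accommodate: the discontinuities of $V$ get absorbed into the gaps of the Cantor set, and the pieces become disjoint polytopes. Concretely, I would identify $\{0,1\}^{\mathbb N}$ (on which $V$ acts by prefix replacement) with the middle-thirds Cantor set $\mathcal{C}\subseteq[0,1]$, so that the cylinder $[p]$ (a finite word $p$) corresponds to the self-similar piece $a_p + 3^{-|p|}\mathcal{C}$, whose convex hull $\mathrm{conv}([p])$ is a rational interval of length $3^{-|p|}$. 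Since the Cantor set leaves gaps between incomparable pieces, the convex hulls of the cylinders of any antichain are pairwise disjoint.

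Each generator $g$ of $V$ is given by a prefix bijection sending a partition $[p_1],\dots,[p_m]$ onto a partition $[q_1],\dots,[q_m]$. I would send it to the map $f_g$ that is affine of slope $3^{|p_i|-|q_i|}$ on each $\mathrm{conv}([p_i])$, carrying it onto $\mathrm{conv}([q_i])$; this is a piecewise affine rational homeomorphism from $B_g=\bigcup_i\mathrm{conv}([p_i])$ onto $B'_g=\bigcup_i\mathrm{conv}([q_i])$, both finite unions of bounded rational polytopes. Taking $\mathcal{F}$ to be the images of a finite generating set of $V$, one checks that each $f_g$ preserves $\mathcal{C}$ and that $\mathcal{C}\subseteq\mathrm{dom}(h)$ for every $h\in S_{\mathcal F}$, so $\mathcal{C}\subseteq T_{\mathcal F}$. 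For property (A), the key observation is that every affine piece of every element of $S_{\mathcal F}$ is again the convex hull of a cylinder (preimages of cylinder-hulls under prefix maps are cylinder-hulls), and each such piece contains infinitely many points of $\mathcal{C}$; hence two elements of $S_{\mathcal F}$ that agree on $\mathcal{C}$ agree on their whole common domain. The restriction-to-$\mathcal{C}$ map therefore identifies $G_{\mathcal F}$ with the faithful Cantor action of $V$, giving $G_{\mathcal F}\cong V$.

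For property (B) I would argue uniformly for all $t\in T_{\mathcal F}$. Since $V$ acts transitively on cylinders (match the prefix $[p]$ to $[q]$ and biject the complementary antichains after refining them to equal cardinality), the $S_{\mathcal F}$-orbit of any $t\in T_{\mathcal F}$ meets $\mathrm{conv}([c])$ for every cylinder $[c]$: if $t\in\mathrm{conv}([r])$ and $h([r])=[c]$, then $f_h(t)\in\mathrm{conv}([c])$. Given $g\neq\lambda$, choose a cylinder $[c]$ deep enough that $g$ is affine on $\mathrm{conv}([c])$ and carries it to a disjoint hull $\mathrm{conv}([c'])$ (a nontrivial element of $V$ moves some cylinder off itself). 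Picking $f\in S_{\mathcal F}$ with $f(t)\in\mathrm{conv}([c])$ then forces $gf(t)\in\mathrm{conv}([c'])$, disjoint from $\mathrm{conv}([c])\ni f(t)$, so $gf(t)\neq f(t)$, whence $g=\lambda$. The main obstacle is precisely this last verification for the \emph{gap points} of $T_{\mathcal F}$, that is, points of the common domain lying in the complement of $\mathcal{C}$: for them one cannot simply invoke density of the orbit in $\mathcal{C}$ together with continuity, and it is the transitivity-on-cylinders argument, rather than a bare density argument, that makes (B) go through. In particular I would not need to compute $T_{\mathcal F}$ exactly.
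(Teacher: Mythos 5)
Your realization of $V$ is the same as the paper's: embed the binary Cantor set as the middle-thirds set $C_3 \subseteq [0,1]$, so that the prefix bijections defining $V$ become piecewise affine rational homeomorphisms between finite unions of rational intervals (the cylinder hulls), the discontinuities being absorbed by the gaps. Where you genuinely diverge is in the verification. The paper fixes an explicit generating set $a,b,c,\pi_0$, extends each formula from $C_3$ to $[0,1]$, and then spends essentially the whole proof establishing $T_{\mathcal F} = C_3$ exactly, by a self-similarity induction driven by the domain and range of the single generator $a$; once $T_{\mathcal F} = C_3$ is known, property (A) is immediate from faithfulness of the Cantor action and property (B) follows from density of orbits in $C_3$ together with continuity. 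You instead decline to compute $T_{\mathcal F}$ and handle possible gap points directly: for (A) you observe that every affine piece of every element of $S_{\mathcal F}$ is a cylinder hull, hence contains infinitely many points of $C_3$, so agreement on $C_3$ propagates to the whole common domain; for (B) you replace density-plus-continuity (which, as you correctly note, does not apply to a gap point, whose entire orbit avoids $C_3$) by transitivity of $V$ on cylinders together with the fact that a nontrivial element maps some sufficiently deep cylinder hull affinely onto a disjoint one. Both arguments are sound. The paper's computation buys an exact description of $T_{\mathcal F}$, and hence the clean statement that $G_{\mathcal F}$ is literally $V$ acting on $C_3$; your route is more robust --- it works verbatim for any finite generating set of $V$ given by prefix bijections and would survive even if $T_{\mathcal F}$ were strictly larger than $C_3$ --- at the cost of the slightly more delicate bookkeeping needed to check that domains and pieces of compositions remain finite unions of cylinder hulls.
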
	
\begin{proof}
	$V$ is usually given \cite{CannonFloydParry} as the generalization of $T$ to discontinuous maps.
	However, our maps in the definition need to be continuous, so we will see
	$V$ as acting on the ``middle thirds'' Cantor set (As a side note, $V$ is therefore
	isomorphic to the group of all revertible generalized one-sided shifts
	\cite{Moore2}).
	
	Let \[ C_3 = \left\{ \sum_{i\geq 1} \frac{\alpha_i}{3^i}, \alpha \in
	  \{0,2\}^{\mathbb{N}^+}\right\}\]
	
	Let $a,b,c,\pi_0$ defined on $C_3$ by:
\[
	a(x) = \left\{\begin{array}{cl}
	  x/3 &  0 \leq x \leq 1/3\\ 
	  x - 4/9 &  2/3 \leq x \leq 7/9\\ 
	  3x-2 &  8/9 \leq x \leq 1\\  
\end{array}\right.
	b(x) = \left\{\begin{array}{cl}
	  x &  0 \leq x \leq 1/3\\ 
	  x/3 + 4/9 &  2/3 \leq x \leq 7/9\\ 
	  x-4/27 &  8/9 \leq x \leq 25/27\\ 
	  3x - 2 & 26/27 \leq x \leq 1\\ 
\end{array}\right.\]\[
c(x) = \left\{\begin{array}{cl}
	  x/3+8/9 &  0 \leq x \leq 1/3\\ 
	  3x-2 &  2/3 \leq x \leq 7/9\\ 
	  x - 2/9 & 8/9 \leq x \leq 1\\ 
\end{array}\right.
\pi_0(x) = \left\{\begin{array}{cl} 
  x/3 + 2/3 & 0 \leq x \leq 1/3 \\ 
  3x - 2 & 2/3 \leq x \leq 7/9 \\ 
  x & 8/9 \leq  x \leq  1\\ 
\end{array}\right.
\]

Now our definition does not permit to define $a,b,c,\pi_0$ on $C_3$,
as the domain and range of each map should be a finite union of
intervals with rational coordinates.
So we will define them by the above formulas, but for $x \in [0,1]$
rather than $x \in C_3$.
Note that they are already homeomorphisms onto their image.

Let ${\cal F} = \{ a,b,c,\pi_0\}$.
We claim that $T_{\cal F} = C_3$, which will prove that $G_{\cal F}$
is indeed isomorphic to $V$. As before, any orbit is dense, from which 
property $(B)$ ensues and $V$ will be PA'-recognizable.

It remains to prove that $T_{\cal F} = C_3$.
Note that clearly $C_3 \subseteq T_{\cal F}$.

First note that 
\begin{itemize}
	\item $Dom(a) = [0,1/3] \cup [2/3,7/9]\cup [8/9,1]$
    \item $Range(a) = [0,1/9] \cup [2/9,1/3] \cup [2/3,1]$
\end{itemize}
Which implies that $T_{\cal F} \subseteq [0,1/9] \cup [2/9,1/3] \cup [2/3,7/9] \cup [8/9,1]$

Now let $x \in T_{\cal F}$.
\begin{itemize}
	\item If $0 \leq x \leq 1/9$, then $3x \in T_{\cal F}$ (apply $a^{-1})$
	\item if $2/9 \leq x \leq 1/3$, then $3x \in T_{\cal F}$  (apply $a^{-1}$, then $c^{-1}$ then $a$)
	\item if $2/3 \leq x \leq 7/9$, then $3x - 2 \in T_{\cal F}$ (apply $c$)
	\item If $8/9 \leq x \leq 1$, then $3x - 2 \in T_{\cal F}$ (apply $a$)
\end{itemize}	
This proves inductively that $x \in C_3$.
\end{proof}	
\section*{Open Problems}

This is only one way of generalizing Kari's construction.
There are many other ways to generalize it, one of which providing a
(weakly) aperiodic SFT on the Baumslag Solitar group, see \cite{AuKa}.

Here is an interesting open question:
 The construction uses representations of reals as words in
 $\{0,1\}^\mathbb{Z}$, can we use a representation in
 $\{0,1\}^H$, for some other group $H$ ?
 This would possibly allow to prove that $H \times G$ has a strongly
 aperiodic SFT for $G$ PA-recognizable.


\end{document}